\documentclass[10pt,reqno]{amsart}

\usepackage{amssymb,latexsym,mathrsfs,amsmath,amsthm}
\usepackage[colorlinks,
    linkcolor={red!50!black},
    citecolor={black!50!black},
    urlcolor={blue!80!black}]{hyperref}
\usepackage{graphicx}
\usepackage{tikz}

\newcommand{\cC}{\mathscr{C}}
\newcommand{\M}{\mathcal{M}}
\newcommand{\Z}{\mathbb{Z}}
\newcommand{\R}{\mathbb{R}}

\newcommand{\KSp}{\mathcal{X}_\mathit{Kh}}
\newcommand{\KSr}{\tilde{\mathcal{X}}_\mathit{Kh}}
\newcommand{\RP}{\mathbb{R}\mathbf{P}}

\DeclareMathOperator{\Ob}{Ob}
\DeclareMathOperator{\Sq}{Sq}
\DeclareMathOperator{\sq}{sq}
\DeclareMathOperator{\id}{id}

\newtheorem{theorem}{Theorem}[section]
\newtheorem{lemma}[theorem]{Lemma}

\newtheorem{conjecture}[theorem]{Conjecture}

\theoremstyle{definition}
\newtheorem{definition}[theorem]{Definition}

\newtheorem{remark}[theorem]{Remark}
\newtheorem{example}[theorem]{Example}

\begin{document}
\parindent0em
\setlength\parskip{.1cm}
\title[Khovanov homotopy calculations]{Khovanov homotopy calculations\\
using flow category calculus}
\author[Andrew Lobb]{Andrew Lobb}
\address{Department of Mathematical Sciences\\ Durham University, UK}
\email{andrew.lobb@durham.ac.uk}

\author[Patrick Orson]{Patrick Orson}
\address{Department of Mathematics,
Boston College, USA}
\email{patrick.orson@bc.edu}

\author[Dirk Sch\"utz]{Dirk Sch\"utz}
\address{Department of Mathematical Sciences\\ Durham University, UK}
\email{dirk.schuetz@durham.ac.uk}

\thanks{The authors were partially supported by the EPSRC Grant EP/K00591X/1. PO was supported by a CIRGET postdoctoral fellowship.}

\begin{abstract}
The Lipshitz-Sarkar stable homotopy link invariant defines Steenrod squares on the Khovanov cohomology of a link. Lipshitz-Sarkar constructed an algorithm for computing the first two Steenrod squares. We develop a new algorithm which implements the flow category simplification techniques previously defined by the authors and Dan Jones.  We give a purely combinatorial approach to calculating the second Steenrod square and Bockstein homomorphisms in Khovanov cohomology, and flow categories in general.

The new method has been implemented in a computer program by the third author and applied to large classes of knots and links. Several homotopy types not previously witnessed are observed, and more evidence is obtained that Khovanov stable homotopy types do not contain $\mathbb{C} P^2$ as a wedge summand. In fact, we are led by our calculations to formulate an even stronger conjecture in terms of $\Z/2$ summands of the cohomology.
\end{abstract}

\maketitle

\section{Introduction}

Framed flow categories were introduced by Cohen-Jones-Segal in \cite{CJS} as a way potentially to refine Floer homological invariants to space-level invariants. They were used by Lipshitz-Sarkar in \cite{LipSarKhov} to produce a stable homotopy type link invariant $\mathcal{X}_{Kh}(L)$ for links $L\subset S^3$. The cohomology of the spectrum $\mathcal{X}_{Kh}(L)$ is the Khovanov cohomology of the link $L$ and so the spectrum can be used to define Steenrod square operations on the Khovanov cohomology of the link. 
The calculation of Steenrod squares becomes important in this context as a way to distinguish interesting stable homotopy types from those which are the suspension spectrum of a wedge of Moore spaces. Moreover, it was shown in \cite{LipSarsinv} that Steenrod squares in Khovanov cohomology can be used to refine the Rasmussen~$s$-invariant of the smooth concordance class of a link, providing another important reason to be interested in their computation.

The main purpose of this paper is to describe a combinatorial algorithm for calculating Steenrod squares in the cohomology of the suspension spectrum associated to a general framed flow category based on the flow category simplifications developed in \cite{JLS2, ALPODS2}. In particular this is a new method computing the Steenrod squares in Khovanov cohomology.

This combinatorial algorithm has been turned into a computer program by the third author. We present some of the more significant computations, including several stable homotopy type wedge summands not previously witnessed, and more evidence for the possibility that $\mathbb{C} P^2$ never appears as a wedge summand of the Khovanov stable homotopy type of any link (see \cite[Question 5.2]{LipSarSq}). Guided by the calculations, we extend this idea to a conjecture about 2-torsion in Chang space wedge summands (see Conjecture \ref{conj:chang}).

\subsection{1-flow categories and the second Steenrod square}

In \cite{LipSarSq}, Lipshitz-Sarkar already describe an algorithm for computing Steenrod squares from a framed flow category, but it is valid only in the specific context of flow categories arising directly from their construction in \cite{LipSarKhov} applied to a link $L\subset S^3$. Our algorithm is quite different from theirs, it works for any framed flow category, and indeed in the more general setting of what we call a \emph{framed 1-flow category}. Any framed flow category restricts to one of these framed 1-flow categories, but a framed 1-flow category is a simpler object. 
In some sense it is nothing but a cochain complex with the minimal amount of extra data added so that one can define a combinatorial second Steenrod square on the cohomology (see Subsection \ref{subsec:1flow} for precise details of this statement).

Put in terms of knot theory and the Lipshitz-Sarkar construction \cite{LipSarKhov}, the Khovanov cochain complex of a link can only `see' the information of the vertices and edges of the Khovanov cube of the link.  The complete Lipshitz-Sarkar framed flow category can see all subcubes of the Khovanov cube of the link, and how they interrelate.  The 1-flow category of a link is a collection of data which can see the information of how 2-dimensional faces of the Khovanov cube relate edges and vertices.  However, this amount of information is sufficient if all you want is to compute the second Steenrod square.

\subsection{Combinatorial arguments}
While a link in $S^3$ is a geometric object, the input for Khovanov homology and for the Khovanov homotopy is the Khovanov cube, which can be viewed as a combinatorial object. We believe that there is some benefit to giving proofs and constructions purely combinatorially, particularly when a combinatorial input is involved. It is also possible that the combinatorics are amenable to generalizations and extensions in ways that non-combinatorial arguments may not be.

In the particular cases of the constructions considered in this paper (which arise originally from framings of embedded arcs and circles), it is a considerable effort to turn the topological data of a framed flow category into combinatorial data sufficient to make computations of the second Steenrod square.  Working combinatorially from the start, this effort is sidestepped, leading to more elementary proofs.

\subsection{Plan of the paper}
We begin the paper by presenting in Section \ref{sec:calculations} some calculations made by the program implementing the algorithm due to the third author \cite{SchuetzKJ}.  We discuss what these calculations suggest for the conjectural picture of stable homotopy types realized by the Lipshitz-Sarkar construction.

For knots and links with more than 16 crossings the associated flow categories returned by the Lipshitz-Sarkar construction become very large, and we found that the new algorithm, in particular when combined with the results of \cite{JLS} on special diagrams, greatly speeds up the Steenrod square computations in these cases.

Section \ref{sec:comb_steen_square} is dedicated to a combinatorial construction and proof of the following theorem, stated here in abbreviated form:

\medskip

{\bf Theorem \ref{thm:steensquare}.}$\,\,$ {\sl Let $(\cC, s, f)$ be a framed 1-flow category. Then there is a well-defined linear map $\Sq^2\colon H^*(\cC;\Z/2) \to H^{*+2}(\cC;\Z/2)$.
}

\medskip

What we mean by `well-defined' in this context we postpone until later to make precise.  Of course, the map defined in this theorem should (and does) agree with the second Steenrod square when computed in the context of the Khovanov homotopy type and its relatives, as we discuss at the end of Section \ref{sec:comb_steen_square}.

In Sections \ref{sec:handle} and \ref{sec:snf_for_flow_categories} we turn to the algorithm.  The algorithm makes heavy use of the \emph{flow category moves} in \cite{JLS2} and \cite{ALPODS2}, or rather their 1-flow category restrictions.  The moves -- called \emph{handle slides}, \emph{handle cancellation}, and \emph{Whitney trick} -- for modifying a framed flow category, are based on ideas from Morse theory.  The moves do not change the stable homotopy type associated to the flow category, but we show in \cite[\textsection 6]{ALPODS2} that they can be used to simplify the flow category itself in an algorithmic way.  To take advantage of this idea for our eventual Steenrod square algorithm, we describe explicitly in Section \ref{sec:handle} how our flow category moves restrict to framed 1-flow categories.

In Section \ref{sec:snf_for_flow_categories} we give a new combinatorial way to calculate the second Steenrod square.
Subsection \ref{subsec:partial} gives a refinement of the auxiliary data used to compute the second Steenrod square, and Subsection \ref{subsec:steen_sq_and_partial_comb_matchings} shows that this refinement suffices.  Subsection \ref{subsec:effects} gives a treatment of the flow category moves in the contexts necessary for the algorithm, while Subsection \ref{subsec:algorithm} gives a complete description of the algorithm.  The algorithm works by first simplifying the 1-flow category using the flow category moves, then applying the new combinatorial Steenrod square process to obtain the result.

\section{Calculations}\label{sec:calculations}

A computer program implementing the described algorithm has been developed by the third author, see \cite{SchuetzKJ}. Both an integral and a mod $4$ version have been written.  The mod $4$ version is noticeably faster when applied to diagrams with a larger number of crossings.

For more complicated diagrams such as for the torus knot $T_{7,4}$ the new program is faster than a previously written program of the third author. The new program is capable of doing calculations on the diagram for the torus knot $T_{6,5}$ given in Figure \ref{fig:tori}, although for different quantum degrees the time consumption varies greatly. As the algorithm still deals with a global approach to the diagram, the program has large memory requirements.

Note that the algorithm starts with objects of top degree in the normalization process. This can lead to a quite different performance depending on whether a knot or its mirror is considered!  One may wonder whether a more balanced algorithm may lead to better results.

\begin{figure}[ht]
\begin{center}
\includegraphics[height= 5cm, width = 5cm]{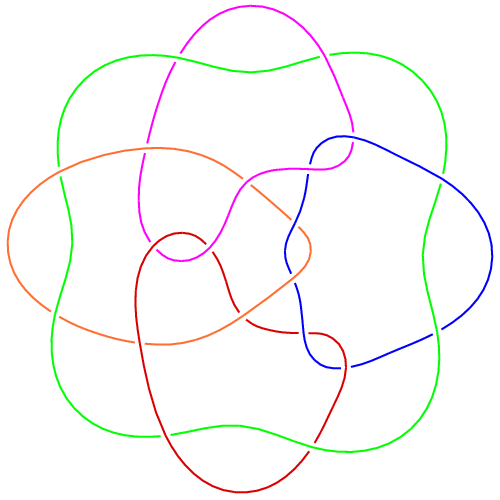}
\includegraphics[height= 5cm, width = 5cm]{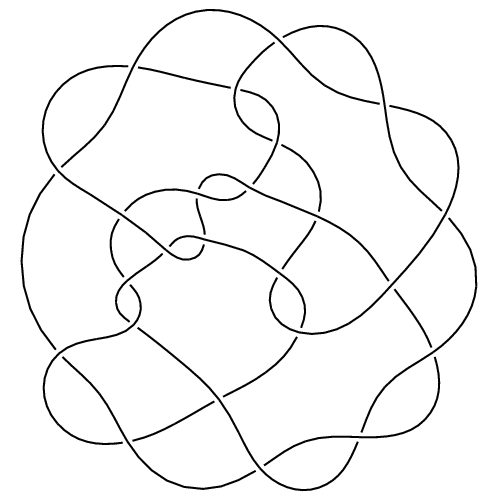}
\caption{Glued diagrams for $T_{5,5}$ (left) and $T_{6,5}$ (right).  For the definition of \emph{glued diagram} see \cite{JLS}.}
\label{fig:tori}
\end{center}
\end{figure}

We denote by $\KSp^q(L)$ the stable homotopy type of the link $L$ in quantum degree $q$, and we write $\KSr^q(L)$ for the reduced stable homotopy type, compare \cite[\S 8]{LipSarKhov}.

\subsection{The torus link $T_{5,5}$}

We used the diagram of Figure \ref{fig:tori} to determine the Lipshitz-Sarkar stable homotopy types of $T_{5,5}$ for various quantum degrees. Figure \ref{fig:KhovanovT55} lists the Khovanov cohomology of $T_{5,5}$ which potentially contain non-trivial second Steenrod squares. For $q=21$ we get the Chang\footnote{See \cite{baues} for the definition of Chang space, and also for the definition of the various Baues-Hennes spaces.} space $X(_2\eta)$ which is already present in the stable homotopy type of $T_{3,3}$ and predicted by the stability result of Willis \cite{Willis}. Similarly, for $q=23$ we get no non-trivial Steenrod squares, in line with the result we get for $T_{5,4}$ for $q=19$.

\begin{figure}[ht]
\begin{center}
\includegraphics[width=7cm]{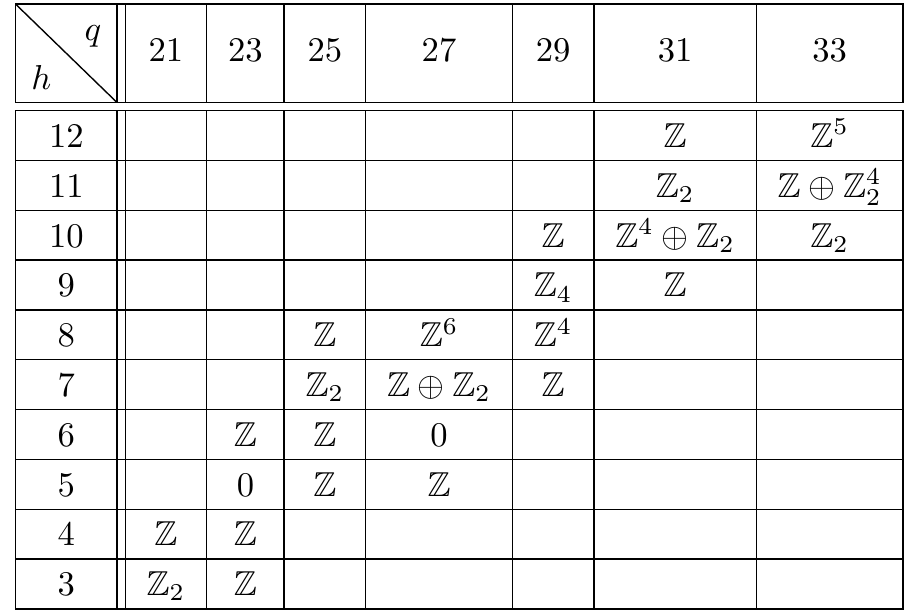}
\caption{The Khovanov cohomology of $T_{5,5}$ in quantum degrees from $q=21$ to $33$.}
\label{fig:KhovanovT55}
\end{center}
\end{figure}

Both of these calculations are done rather fast. For $q=25$ the calculation is still fairly efficient, but one can get the result even quicker by considering the reduced stable homotopy type.

The reduced Khovanov cohomology of $T_{5,5}$ is given in Figure \ref{fig:redKhovT55}, and calculations show that $\KSr^{26}(T_{5,5})$ is the central Baues-Hennes space $X(\eta2\xi)$, while $\KSr^{32}(T_{5,5})$ is the wedge of a Chang space $X(_2\eta)$ with Moore spaces. The central Baues-Hennes space $X(\eta2\xi)$ can be realized as an appropriate suspension of $\RP^5/\RP^1$.

\begin{figure}[ht]
\begin{center}
\includegraphics[width=5cm]{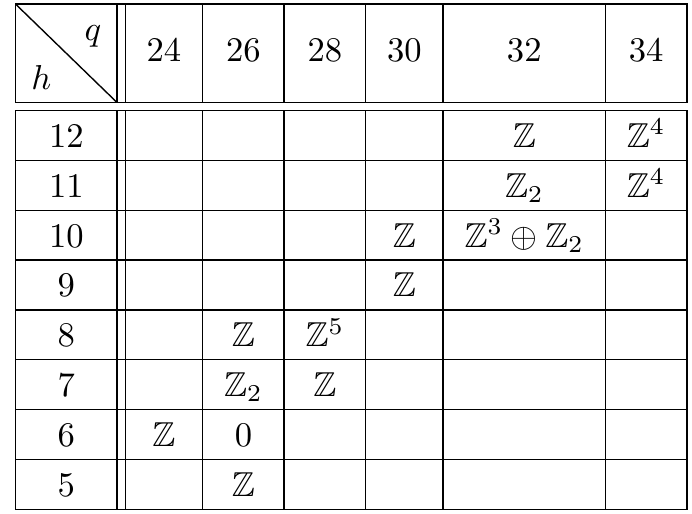}
\caption{The reduced Khovanov cohomology of $T_{5,5}$ in quantum degrees from $q=24$ to $34$.}
\label{fig:redKhovT55}
\end{center}
\end{figure}

There is a cofibration sequence $\KSr^{q-1}(T_{5,5}) \hookrightarrow \KSp^q(T_{5,5}) \twoheadrightarrow \KSr^{q+1}(T_{5,5})$ by \cite[Thm.8.1]{LipSarKhov}. From the resulting long exact sequence in cohomology and the naturality of Steenrod squares we get that both $\KSp^{25}(T_{5,5})$ and $\KSp^{27}(T_{5,5})$ contain $X(\eta2\xi)$ and the rest are Moore spaces.

For $q\geq 29$ the calculations become very time and memory consuming, and we have not finished them. Particularly $q=29$ would be interesting, as there is no $\Z/2$ summand in the Khovanov cohomology. One may expect a trivial second Steenrod square from degree $7$ to $9$ because of naturality with $T_{5,4}$, but a non-trivial second Steenrod square from degree $8$ to $10$ would lead to a previously unobserved Chang space.

\subsection{The torus knot $T_{6,5}$}

Similarly we can do calculations for $T_{6,5}$ using the right diagram of Figure \ref{fig:tori}. For $q=29$ we also get a central Baues-Hennes space $X(\eta2\xi)$, which should come from the $T_{5,5}$ result, although it does not fall into the stable range described by Willis \cite{Willis}. Again this Baues-Hennes space appears in the reduced stable homotopy type for $q=30$, which means it also appears in the unreduced case for $q=31$.

\subsection{Knots with non-trivial $\Sq^3$}

The disjoint union of three trefoils is known to contain the cyclic Baues-Hennes space $X(\xi^2\eta_2,\id)$, see \cite{LawLipSar} and \cite{ALPODS2}, giving the simplest example of a link with non-trivial $\Sq^3$. Recall that $\Sq^1\Sq^2 = \Sq^3$ by an Adem relation. Going through the prime knot tables of \cite{knotscape}, we found exactly one knot with at most 13 crossings that admits a non-trivial $\Sq^3$, namely $13n3663$ in quantum degree $q=1$. The resulting Baues-Hennes space is again $X(\xi^2\eta_2,\id)$. Figure \ref{fig_squarethree} lists the prime knots with 14 crossings that admit this Baues-Hennes space. No other Baues-Hennes spaces have been found for prime knots with up to 14 crossings, although there could be Baues-Hennes spaces corresponding to $\varepsilon$-words among them.
\begin{figure}[ht]
\begin{tabular}{|c|c||c|c|}
\hline
Knot & $q$ & Knot & $q$ \\
\hline
\hline
14n10164 & $-3$ & 14n21627 & $-9$\\
14n10510 & $-7$ & 14n22180 & $3$\\
14n18918 & $-7$ & 14n22185 & $-1$\\
14n18935 & $15$ & 14n22589 & $-1$\\
14n19177 & $-1$ & 14n23524 & $-3$\\
14n19265 & $-3$ & 14n26039 & $-1$\\
14n19315 & $9$ & 14n26580 & $9$\\
\hline
\end{tabular}
\caption{Prime knots with 14 crossings containing a cyclic Baues-Hennes space $X(\xi^2\eta_2,\id)$.}
\label{fig_squarethree}
\end{figure}

\subsection{Knots with Chang spaces involving $4$-torsion}

There are not that many prime knots with at most 16 crossings that have $4$-torsion in their Khovanov cohomology. We have tested these knots for whether their stable homotopy type contains a Chang space involving $4$-torsion. Those which do are listed in Figure \ref{fig_chang4tor}.
\begin{figure}[ht]
\includegraphics[width = 12.6cm]{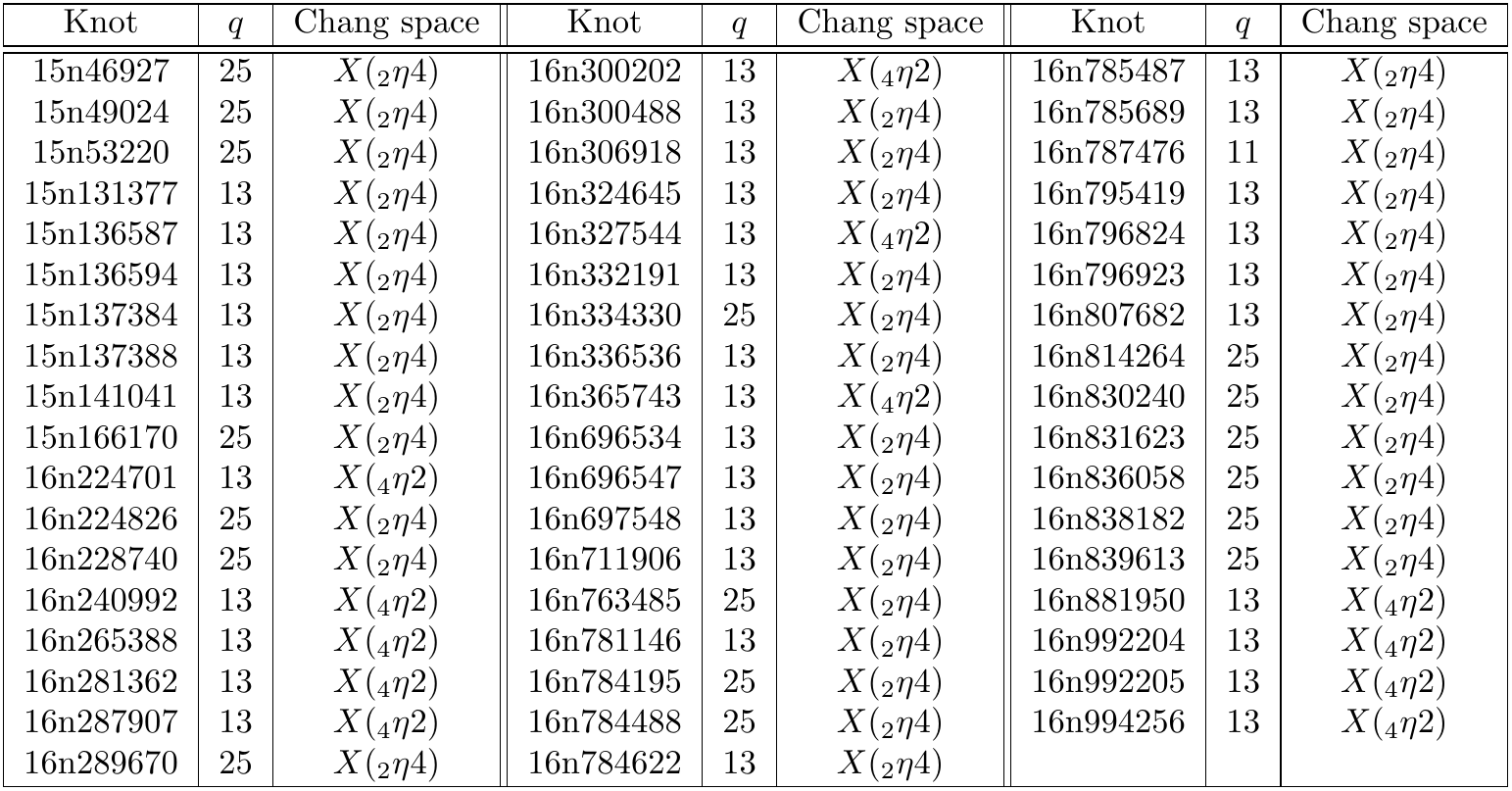}
\caption{Prime knots with up to 16 crossings containing a Chang space involving $4$-torsion.}
\label{fig_chang4tor}
\end{figure}
The knots have been mirrored so that the $4$-torsion appears in a positive quantum degree. Note that all the occurring Chang spaces also contain $2$-torsion, and the Chang space appears in quantum degree $13$ or $25$ (with one exception, where $q=11$). The table only lists the summand in the stable homotopy type which contains the $4$-torsion. None of the knots with $4$-torsion in their Khovanov cohomology has more than one $\Z/4$ summand in their cohomology. On the basis of these calculations, we make the following tentative conjecture.

\begin{conjecture}\label{conj:chang}If a Chang space occurs as a wedge summand of the Khovanov stable
homotopy type of a link, then that Chang space has a $\Z/2$ summand in its cohomology.
\end{conjecture}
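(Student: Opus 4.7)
The plan is to combine the Baues--Hennes classification of Chang spaces with structural properties of Khovanov cohomology to rule out the ``bad'' candidate summands: those Chang spaces whose integral cohomology has only $\Z/2^n$-torsion for some $n\geq 2$ but no $\Z/2$ direct summand. Since Chang spaces are classified by a small finite list of cell attachments involving $\eta$, multiplication by powers of $2$, and $\xi$, one can enumerate in each stable range the finitely many candidate forbidden Chang spaces $X$ along with their mod-$2$ cohomology and the action of the Bocksteins $\beta_{2^k}$.

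The key observation is that for each such forbidden $X$ the primary Bockstein $\Sq^1 = \beta_2$ vanishes on $H^*(X;\Z/2)$, yet some higher Bockstein $\beta_{2^n}$ is nonzero and detects the higher torsion. If $X$ were a wedge summand of $\KSp^q(L)$, this would force a very rigid local pattern in the integral Khovanov cohomology, together with its Bockstein action and its second Steenrod square action as computed by Theorem \ref{thm:steensquare}. The strategy is then to exhibit an obstruction to this pattern in Khovanov cohomology itself, for instance by producing a canonical companion $\Z/2$ class accompanying every $\Z/2^n$ class in the cohomology, which must therefore lie in the same wedge summand as $X$ and so contradict the assumed form of $X$.

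The main obstacle, I expect, is this last step: translating a strong algebraic property of the Khovanov complex, for example Shumakovitch's results on the mod-$2$ Bockstein spectral sequence, the behaviour of the Turner or Bar--Natan differential, or the combinatorial structure of the integral Khovanov chain complex mod $4$ used by the algorithm of this paper, into the statement that $\Z/2^n$-torsion is always accompanied by enough $\Z/2$-torsion to prevent a pure higher-torsion Chang summand from splitting off. The empirical regularity recorded in Figure \ref{fig_chang4tor}, in which the observed Chang summands do all contain the desired $\Z/2$ summand and appear at only a few quantum degrees (mostly $q=13$ or $25$), suggests that a canonical companion class is indeed always present, and a successful proof would identify it.

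A natural intermediate target would be to settle the conjecture first for the mod-$4$ version of the theory used in the algorithm, where the finite list of possible Bockstein and $\Sq^2$ patterns is easier to control, and where any forbidden Chang summand would produce a specific localized combinatorial configuration in a simplified $1$-flow category of the sort studied in Sections \ref{sec:handle} and \ref{sec:snf_for_flow_categories}. One could then attempt to use the handle slide, cancellation, and Whitney trick moves there to transform any such hypothetical configuration into one that manifestly could not arise from a link diagram.
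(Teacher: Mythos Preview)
The statement you are attempting to prove is labelled a \emph{conjecture} in the paper, and the paper offers no proof of it whatsoever. It is explicitly described as a ``tentative conjecture'' formulated on the basis of computer calculations (Figure~\ref{fig_chang4tor}), and the paper notes that it subsumes the open question of whether $\mathbb{C}P^2$ can appear as a wedge summand. There is therefore no proof in the paper for your proposal to be compared against.

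Your proposal is not a proof either: it is an outline of a strategy, and you yourself identify the decisive step as an unresolved obstacle. Even as a strategy, there is a concrete gap. Your ``key observation'' is that every forbidden Chang space $X$ has $\Sq^1=0$ on $H^*(X;\Z/2)$ but a nonzero higher Bockstein $\beta_{2^n}$ detecting higher torsion. This is false for the most basic forbidden case, $X(\eta)\simeq\mathbb{C}P^2$: its integral cohomology is torsion-free, so all Bocksteins vanish, and the only nontrivial operation is $\Sq^2$. Thus your proposed obstruction mechanism, which hinges on a higher Bockstein forcing a companion $\Z/2$ class, simply does not apply to $\mathbb{C}P^2$, and hence cannot address the very case the paper singles out (see the sentence immediately following the conjecture). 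Any genuine approach must handle torsion-free Chang spaces as well, and your outline gives no indication of how to do so. Similarly, the appeal to Shumakovitch-type results about the mod-$2$ Bockstein spectral sequence is not obviously relevant to excluding a summand with no $2$-torsion at all.
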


Note that, as $\mathbb{C} P^2$ has the stable homotopy type of the Chang space $X(\eta)$, this conjecture includes the statement that~$\mathbb{C} P^2$ does not appear as a wedge summand of the Khovanov stable homotopy type of a link.

\section{Combinatorial Steenrod squares}
\label{sec:comb_steen_square}
This section is in large part dedicated to the statement and combinatorial proof of Theorem \ref{thm:steensquare}.  This theorem will not be surprising, given its topological motivations.  The price paid for keeping the arguments in this section combinatorial is a somewhat lengthy although elementary proof.

We start in Subsection \ref{subsec:1flow} giving the definition of our principal object of interest: a 1-flow category.  Subsection \ref{subsec:secgraphstr} formalizes the auxiliary information needed to make a computation of the second Steenrod square, and does it in such a way as to be useful to us in giving the algorithm later.  Finally Subsection \ref{subsec:moves} gives the proof of Theorem \ref{thm:steensquare}.  We conclude with a few remarks concerning topological arguments.

\subsection{1-flow categories}\label{subsec:1flow}

In the combinatorial algorithm of \cite{LipSarSq} for calculating the Steenrod squares on Khovanov cohomology, it was already observed that only a very restricted amount of the data of the flow category is actually required for the computation. In this paper we formalise this restricted amount of information in the following object.

\begin{definition}\label{def:1flow}
 A \em $1$-flow category $\cC$ \em consists of a finite set $\Ob(\cC)$, a function $|\cdot|\colon \Ob(\cC)\to \Z$ called the \em grading\em, and for each pair $a,b\in \Ob(\cC)$ with $|a|-|b|=1$ or $2$ a \em moduli space \em $\M(a,b)$ which is a compact manifold of dimension $|a|-|b|-1$, satisfying the following

\begin{itemize}
 \item {\bf Boundary Condition:}
If $a,c\in \Ob(\cC)$ with $|a|-|c|=2$, then the boundary of $\M(a,b)$ is given by
\[
 \partial \M(a,c)=\coprod_{b\in \Ob(\cC),|b|=|a|-1} \M(b,c)\times \M(a,b)
\]
\end{itemize}

If $|a|-|d|=3$ we do not need a moduli space $\M(a,d)$. However, we define the boundary of $\M(a,d)$ as
\begin{align*}
 \partial \M(a,d)= & \coprod_{b\in \Ob(\cC),|b|=|a|-1} \M(b,d)\times \M(a,b) \\
 & \cup \coprod_{c\in \Ob(\cC),|c|=|a|-2}\M(c,d)\times \M(a,c)
\end{align*}
Notice that the two disjoint unions have a common subset, which is
\[
 \coprod_{\stackrel{(b,c)\in \Ob(\cC)\times \Ob(\cC)}{|b|=|c|+1=|d|+2}} \M(c,d)\times \M(b,c)\times \M(a,b)
\]
It is easy to see that $\partial\M(a,d)$ is a disjoint union of components which are homeomorphic to circles.

\end{definition}

We caution that a $1$-flow category is not a flow category. But one can usually think of a $1$-flow category as a flow category where the higher dimensional moduli spaces are ignored (although not always, as we show in Example \ref{ex:adem}).

\begin{definition}
 Let $\cC$ be a $1$-flow category. A \em sign assignment $s$ \em for $\cC$ is an assignment $s(P)\in \{0,1\}$ for every point $P$ in a $0$-dimensional moduli space $\M(a,b)$ with the property that if $(P_1,Q_1)\in \M(b_1,c)\times \M(a,b_1)$ and $(P_2,Q_2)\in \M(b_2,c)\times \M(a,b_2)$ are the boundary of an interval component in $\M(a,c)$, then 
\[
 s(P_1)+s(Q_1)+s(P_2)+s(Q_2)=1.
\]
A \em pre-framing $f$ \em of $\cC$ is an assignment $f(C)\in \{0,1\}$ for every component $C\subset \M(a,c)$ of a $1$-dimensional moduli space.
\end{definition}

The chain complex $C_\ast(\cC)$ is defined to have modules freely generated over $\Z$ by the objects of $\cC$, which are homologically graded by the grading function of $\cC$. A sign assignment allows us to define a differential on this graded module in the obvious way. The purpose of a pre-framing is to give us enough information to define a cohomology operation combinatorially on the resulting cohomology with $\Z/2$-coefficients. However, an arbitrary choice of pre-framing will not be sufficient for this purpose, and there must be a certain consistency in our choices as we now discuss.

Note that the connected components $C$ of a $1$-dimensional moduli space are either intervals or circles. A circle embedded in $\R^n$ with $n\geq 4$ can be framed in two ways, one is trivial in the sense that it extends to a framing of the disc, and there is a non-trivial framing. We shall also refer to the trivial framing as the \emph{standard framing}. Similarly, if an interval is embedded in $[0,\infty)\times \R^n$, with boundary embedded in $\{0\}\times \R^n$, and satisfying a transversality condition near the boundary, then there are two ways of framing it (provided a framing on the boundary is fixed and that this boundary framing is different at the two endpoints). 
Calling one of these two framings `standard' or `non-standard' requires consistent choices. Such choices were made in \cite[\textsection 3]{LipSarSq}. We will not repeat that somewhat technical discussion here, as we shall only need the combinatorial consequences.

When $f(C)=0$ we will call this a trivial framing for $C$, and when $f(C)=1$ we will call this a non-trivial framing. In order for this language to be meaningful, we need to add a consistency condition in terms of an induced framing on $\partial\M(a,d)$.

Let $C$ be a component in $\partial \M(a,d)$. If $C$ is a circle, which is either of the form $\{P\}\times S^1\subset \M(c,d)\times \M(a,c)$ or $S^1\times \{Q\}\subset \M(b,d)\times \M(a,b)$, we define 
\[
\tilde{f}(C)=1+f(S^1)\in \Z/2,
\]
 where $f(S^1)$ is the framing value of the circle. If $C$ is a union of intervals $\{P\}\times J\subset \M(c,d)\times \M(a,c)$ and $I \times \{Q\}\subset \M(b,d)\times \M(a,b)$, we define
\[
 \tilde{f}(C) = \sum_{I \times \{Q\}}f(I)+ \sum_{\{P\}\times J} (1+s(P)+f(J))\in \Z/2
\]
where the first sum is over all intervals of the form $I\times\{Q\}\subset C$ and the second sum over all intervals of the form $\{P\}\times J$.

\begin{definition}
 Let $\cC$ be a $1$-flow category, $s$ a sign assignment and $f$ a pre-framing of $\cC$. Then $(\cC,s,f)$ is called a \em framed $1$-flow category\em, if the following \em compatibility condition \em is satisfied for $f$.
\begin{itemize}
 \item Let $a,d\in \Ob(\cC)$ satisfy $|a|=|d|+3$. Then
\[
 \sum_{C} (1+\tilde{f}(C)) = 0 \in \Z/2
\]
where the sum is taken over all components in $\partial \M(a,d)$.
\end{itemize}
If $f$ satisfies the condition with respect to $s$, we call $f$ a \em framing of $\cC$\em.
\end{definition}

\begin{definition}
Let $(\cC,s,f)$ be a framed $1$-flow category. For objects $a,b\in \Ob(\cC)$ with $|a|=|b|+1$ we define $[a:b]\in \Z$ as
\[
 [a:b] = \sum_{A\in \M(a,b)} (-1)^{s(A)}.
\]
\end{definition}

\begin{remark}
A framed flow category in the sense of \cite{LipSarKhov} determines a framed $1$-flow category. The framing values for each interval component can be derived using the description in \cite{LipSarSq}, see also \cite{JLS}. We omit the slightly cumbersome derivation of this, but the reader is invited to check that the specific framing values given in \cite{LipSarSq} and \cite{JLS} do indeed satisfy the compatibility condition.
\end{remark}

\subsection{Special graph structures}\label{subsec:secgraphstr}

\begin{definition}
Let $V$ be a finite set. An \em edge \em is a subset $e\subset V$ with two elements. A \em directed edge \em is an edge $e$ together with a function $d\colon e \to V$ satisfying $d(e)\subset e$.
\end{definition}

We can think of a directed edge $e$ pointing towards $d(e)$, or write it as a pair $e=(v,w)$ with $w=d(e)$.

\begin{definition}\label{def:graphstructure}
 A \em special graph structure \em $\Gamma=\Gamma(V,E,E',E'',L,s,f)$ consists of a set of vertices $V$, a function $s\colon V \to \{0,1\}$, a set of edges $E$, a subset $E'\subset E$ together with a function $f\colon E'\to \{0,1\}$, a subset $E'' \subset E-E'$ of directed edges, and a set of loops $L$. All sets are finite. It has to satisfy the following conditions:
\begin{enumerate}
 \item \label{item:gs1}Each vertex is contained in at least one edge, and at most in two edges. We denote the set of vertices contained in only one edge by $\partial V$ and call these the boundary points.
 \item \label{item:gs2}If $v\in \partial V$, then the unique edge $e(v)$ with $v\in e(v)$ satisfies $e(v)\in E'$.
 \item \label{item:gs3}If $v \in V-\partial V$, then there is an edge $e_1\in E'$ with $v\in e_1$, and an edge $e_2\in E-E'$ with $v\in e_2$.
 \item \label{item:gs4}If $e\in E'$ and $e=\{v_1,v_2\}$, then $s(v_1)\not=s(v_2)$.
 \item \label{item:gs5}If $e\in E-E'$ and $e=\{v_1,v_2\}$, then $s(v_1)=s(v_2)$ if and only if $e\in E''$.
\end{enumerate}
The edges $E$ determine an equivalence relation on the vertex set $V$ in the obvious way. The set of equivalence classes under this relation, taken together with the set $L$ of loops, forms the set of \em components \em of $\Gamma$. Notice that $L$ is a set independent from possible loops made from edges in $E$.
\end{definition}

\begin{example}\label{exam:first}
Let ($\cC,s,f)$ be a framed $1$-flow category and $a,c\in \Ob(\cC)$ with $|a|-|c|=2$. Define a special graph structure $\Gamma(a,c)$ as follows. The vertex set $V$ consists of all points $(B,A)\in \M(b,c)\times \M(a,b)$ for some $b\in \Ob(\cC)$ with $s(B,A)=s(B)+s(A)$. The edges $E'=E$ are determined by the intervals in $\M(a,c)$. The set of loops $L$ in $\Gamma(a,c)$ is given by the set of non-trivially framed circles in $\M(a,c)$. The functions $s$ and $f$ in $\Gamma(a,c)$ are determined by the sign assignment and framing of $(\cC,s,f)$.
\end{example}

\begin{definition}
Let $\Gamma=\Gamma(V,E,E',E'',L,s,f)$ be a special graph structure. For a component $C$ of $\Gamma$ which contains a vertex, let $F(C)$ be the sum of the framing values $f(e')$ where $e'\in E'$ is in $C$. Also, let $D(C)\in \Z/2$ be the number of oriented edges in $C$ which point in a chosen given direction.
\end{definition}

\begin{lemma}
 In the definition of $D(C)$, the value does not depend on the choice of given direction.
\end{lemma}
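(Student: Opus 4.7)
The plan is to show that reversing the chosen traversal direction of $C$ changes $D(C)$ by $|E''\cap C|$ modulo $2$, and then that this number is always even. First I would identify the topology of $C$: by condition (1), every vertex has degree at most $2$, so $C$ is either a simple path with two endpoints in $\partial V$, or a simple cycle. Conditions (2) and (3) then force the edges of $C$ to alternate strictly between the classes $E'$ and $E - E'$: every interior vertex meets exactly one edge of each class, and every boundary vertex meets only an $E'$ edge. In particular, a cycle component has an even number of edges, half in each class.

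Next I would track the sign function $s$ along $C$. Condition (4) makes $s$ flip at every $E'$ edge, and condition (5) makes $s$ flip at an $E - E'$ edge if and only if that edge is not in $E''$. So the total number of sign-flips on a traversal of $C$ equals $|E\cap C| - |E''\cap C|$. For a cycle, the walk closes up, so this total is even; combined with $|E\cap C|$ being even, this forces $|E''\cap C|$ to be even. For a path, the analogous computation gives $|E''\cap C| \equiv 1 + s(v_0) + s(v_1) \pmod 2$ for the two boundary endpoints $v_0,v_1$, and evenness then follows from the convention that path endpoints carry opposite $s$-values.

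Finally, reversing the traversal direction of $C$ swaps ``forward'' and ``backward'' for each oriented edge in $E''\cap C$, so the two candidate values of $D(C)$ differ by $|E''\cap C| \equiv 0 \pmod 2$, proving the lemma. The main obstacle I anticipate is the path case: the axioms as written do not immediately rule out boundary vertices of a path component carrying equal $s$-values, so one may need to appeal to an implicit convention about path directions, or to additional properties of the special graph structures that arise in the applications (in Example \ref{exam:first}, for instance, $E = E'$ forces path components to be single edges, whose endpoints automatically have opposite signs by condition (4)).
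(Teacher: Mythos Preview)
Your argument for the cycle case is correct and is precisely the paper's proof: the alternation forced by condition~(3) gives an even number of edges, the sign-flip count modulo~$2$ around a closed walk must vanish, and since the undirected edges are exactly those that flip the sign, there are evenly many of them, hence evenly many directed ones.

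Your worry about path components is unnecessary. The paper's own proof simply asserts ``Since $C$ is a circle'' and treats only that case; there is no convention guaranteeing opposite $s$-values at path endpoints, and none is needed. The point is that $D(C)$ is only ever invoked for cycle components: in Definition~\ref{def:sqca} the graph $\Gamma_{\mathcal C}(a,\varphi)$ has $\partial V=\emptyset$ by construction, in Definition~\ref{def:ogamma} the quantity $o(\Gamma)$ is only defined when $\partial V=\emptyset$, and in Section~\ref{subsec:steen_sq_and_partial_comb_matchings} the value $o(C)$ is defined only for components not meeting $\partial V$. So the lemma is implicitly (and in the proof, explicitly) about circles, and you may drop the path discussion entirely.
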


\begin{proof}
Let $C$ be a component and $v$ a vertex in $C$. We need to show that the number of directed edges in $C$ is even. First note that the total number of edges in $C$ is even by Definition \ref{def:graphstructure} (\ref{item:gs3}). Let $F''$ be the directed edges in $C$ and let $F$ be the non-directed edges in $C$. Note that non-directed edges change the sign of their endpoints by Definition \ref{def:graphstructure} (\ref{item:gs4}) and (\ref{item:gs5}), while directed edges do not, by Definition \ref{def:graphstructure} (\ref{item:gs5}). Since $C$ is a circle, we have to have an even number of edges in $F$. It follows that the number of directed edges is also even.
\end{proof}

Let $(\cC,s,f)$ be a framed 1-flow category. A cochain $\varphi\in C^k(\cC;\Z/2)$ can be identified with a function $\varphi\colon \Ob_k(\cC)\to \Z/2$, where $\Ob_k(\cC)\subset\Ob(\cC)$ consists of those objects $c\in \Ob(\cC)$ with $|c|=k$. For each cochain $\varphi$ there exists a unique set $\{c_1,\ldots,c_l\}\subset \Ob(\cC)$ with $\varphi(c_i)=1$ for all $i=1,\ldots,l$, and $\varphi(c)=0$ if $c\not=c_i$ for all $i=1,\ldots,l$. We say that $\varphi$ is \em represented \em by $c_1,\ldots,c_l$. If $\varphi$ is represented by exactly one $c$, we will simply write $c=\varphi$ by an abuse of notation.

Now suppose $\varphi\in C^k(\cC;\Z/2)$ is a cocycle represented by $c_1,\ldots,c_l\in \Ob(\cC)$. Let $b_1,\ldots,b_m\in \Ob(\cC)$ be all objects that have a non-empty $0$-dimensional moduli space $\M(b_i,c_j)$ for some $j\in\{1,\ldots,l\}$. 

If we fix $b_i$, the union
\[
 \M(b_i,\varphi) := \bigcup_{j=1}^l \M(b_i,c_j)
\]
is a set of even cardinality because $\varphi$ is a cocycle with $\Z/2$-coefficients. 

\begin{definition}Let $(\cC,s,f)$ be a framed 1-flow category and $\varphi\in C^k(\cC;\Z/2)$ be a cocycle. Writing $b_1,\dots b_m$ as above we may choose a partition of the elements of $\M(b_i,\varphi)$ into ordered pairs. If we make this choice for each $i=1,\ldots,m$, the overall choice is called a \em combinatorial boundary matching \em $\mathcal{C}$ for $\varphi$.
\end{definition}

Now suppose we have made a choice of combinatorial boundary matching $\mathcal{C}$ for a cocycle $\varphi\in C^k(\cC;\Z/2)$ and let $a\in \Ob(\cC)$ satisfy $|a|=k+2$. Define a special graph structure $\Gamma_\mathcal{C}(a,\varphi)$ as follows. We begin with the disjoint union of the special graph structures $\Gamma(a,c_j)$, defined as in Example \ref{exam:first}. Note that if $\M(a,c_i)=\emptyset$, we can also use the empty set for $\Gamma(a,c_j)$. Let $(B_1,B_2)$ be a pair in $\mathcal{C}$. We then add an edge between $(B_1,A)$ and $(B_2,A)$ for all $A\in \M(a,b_i)$ with $B_k\in \M(b_i,c_{j_k})$. If $s(B_1)=s(B_2)$, this edge is directed towards $B_2$, otherwise it is undirected. This defines $\Gamma_\mathcal{C}(a,\varphi)$.

Notice that by construction all vertices of $\Gamma_\mathcal{C}(a,\varphi)$ have valency two, so that the set of components $C$ of $\Gamma_\mathcal{C}(a,\varphi)$ consists of a disjoint union of circles together with the loop set $L$.

\begin{definition}\label{def:sqca}Let $(\cC,s,f)$ be a framed $1$-flow category and $\varphi\in C^k(\cC;\Z/2)$ a cocycle. Then define a cochain $\sq^\varphi\colon C_{k+2}(\cC;\Z/2)\to \Z/2$ by 
\[
\sq^\varphi(a):=|L|+\sum_{C}\left(1+F(C)+D(C)\right)\in\Z/2,
\]
where $L$ is the loop set of $\Gamma_\mathcal{C}(a,\varphi)$ and the sum is taken over all components $C$ of~$\Gamma_\mathcal{C}(a,\varphi)$ containing a vertex.
\end{definition}

The following theorem states that we now have enough combinatorial information to define a Steenrod square for 1-flow categories.

\begin{theorem}\label{thm:steensquare}
Let $(\cC,s,f)$ be a framed $1$-flow category and $\varphi\in C^k(\cC;\Z/2)$ a cocycle. Then $\sq^\varphi$ is a cocycle and the cohomology class of $\sq^\varphi$ does not depend on the choice of combinatorial boundary matching for $\varphi$. Furthermore, there is a well defined linear map \[\Sq^2\colon H^k(\cC;\Z/2) \to H^{k+2}(\cC;\Z/2);\qquad z\mapsto [\sq^\varphi],\]where $\varphi$ is any choice of cocycle representing the cohomology class $z$.
\end{theorem}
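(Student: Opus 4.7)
The plan is to verify four assertions in sequence: (i) $\sq^\varphi$ is a cocycle; (ii) its cohomology class is independent of the combinatorial boundary matching $\mathcal{C}$; (iii) it depends only on the cohomology class of the cocycle representative $\varphi$; and (iv) the induced map $\Sq^2$ is linear.

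For (i), I would fix $a'\in\Ob(\cC)$ with $|a'|=k+3$ and expand $\delta(\sq^\varphi)(a')=\sum_{|b|=k+2}[a':b]\,\sq^\varphi(b)\in\Z/2$. The key construction is to assemble, over all such $b$ and all $A\in\M(a',b)$, the special graphs $\Gamma_\mathcal{C}(b,\varphi)$ together with auxiliary edges coming from the 1-dimensional pieces $\{C\}\times\M(a',c)\subset\partial\M(a',c_j)$ with $|c|=k+1$ and $C\in\M(c,c_j)$. The resulting combinatorial object should have its components correspond to boundary circles of the ``virtual'' 2-dimensional moduli spaces $\M(a',c_j)$ glued over $j$. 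The compatibility condition $\sum_C(1+\tilde f(C))\equiv 0\pmod 2$ in the definition of framed 1-flow category, summed over all $c_j$, then translates --- after establishing the dictionary between $\tilde f$ and the expressions $|L|+\sum_C(1+F(C)+D(C))$ of Definition \ref{def:sqca} --- into the identity $\delta(\sq^\varphi)(a')=0$.

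For (ii), any two matchings $\mathcal{C},\mathcal{C}'$ differ by a sequence of elementary swaps at a single $b_i$, each exchanging $\{(B_1,B_2),(B_3,B_4)\}$ for $\{(B_1,B_3),(B_2,B_4)\}$ or $\{(B_1,B_4),(B_2,B_3)\}$. I would check that such a swap replaces four edges of $\Gamma_\mathcal{C}(a,\varphi)$ by four others and, depending on the local arrangement, merges, splits, or reconnects the affected components; the net change in $|L|+\sum_C(1+F(C)+D(C))$ as a function of $a$ can then be exhibited explicitly as a coboundary. Assertion (iii) reduces via (iv) to showing that $\sq^{\delta\psi}$ is a coboundary for every $\psi\in C^{k-1}(\cC;\Z/2)$; for this I would pick a matching for the cocycle $\delta\psi$ that is adapted to $\psi$ and construct a combinatorial primitive directly. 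For (iv), given cocycles $\varphi_1,\varphi_2$ with matchings $\mathcal{C}_1,\mathcal{C}_2$, I would combine these into a matching for $\varphi_1+\varphi_2$ (resolving overlaps on $\M(b_i,\varphi_1)\cap\M(b_i,\varphi_2)$ by further swaps, as in (ii)), and then compare $\Gamma_{\mathcal{C}_1\cup\mathcal{C}_2}(a,\varphi_1+\varphi_2)$ to the disjoint union of $\Gamma_{\mathcal{C}_i}(a,\varphi_i)$: the difference in $\sq$ is concentrated in components that cross between the two subgraphs, and an argument parallel to (ii) exhibits it as a coboundary.

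The main obstacle is step (i). The heart of the argument is verifying that $|L|$, $F(C)$, and $D(C)$, attached to components of $\Gamma_\mathcal{C}(b,\varphi)$, encode exactly the framing values $\tilde f$ on $\bigsqcup_j\partial\M(a',c_j)$ under the gluings provided by $\mathcal{C}$: specifically, that $D(C)$ produces the $1+s(P)$ sign contributions, $F(C)$ produces the $f(I)$ and $f(J)$ terms, and loops in $L$ supply the $1+f(S^1)$ contribution from circle boundary components. A further subtlety is that the matching edges in each $\Gamma_\mathcal{C}(b,\varphi)$ do not themselves correspond to topological pieces of $\partial\M(a',c_j)$, yet summed across all $b$ they must cancel in a way compatible with the topological picture. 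Once the dictionary is carefully set up, the compatibility condition closes the argument.
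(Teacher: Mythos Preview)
Your outline is correct and follows essentially the same four-step strategy as the paper: cocycle, matching-independence, additivity, and vanishing on coboundaries. The paper even organizes the steps in the same order, and the mechanisms you identify (gluing the $\Gamma_\mathcal{C}(b,\varphi)$ along edges coming from $1$-dimensional pieces of $\partial\M(a',c_j)$ for (i), elementary swaps at a single $b_i$ for (ii), choosing a matching adapted to $d$ with $\varphi=\delta d$ for (iii), and combining $\mathcal{C}_1,\mathcal{C}_2$ for (iv)) are exactly those used.

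The one genuine difference is organizational but important. The paper first introduces an equivalence relation $\sim$ on special graph structures generated by nine explicit local moves (a)--(i), and proves once and for all that $o(\Gamma)$ is invariant under $\sim$. Every subsequent step of the proof then becomes a short verification that a certain modification of a special graph structure is a composite of these moves (possibly together with an extra loop or a $\delta(b)$). By contrast, your plan calls for ad hoc case analyses at each stage: ``merges, splits, or reconnects the affected components; the net change \ldots\ can then be exhibited explicitly as a coboundary''. This would work, but each instance amounts to reproving a special case of the move-invariance lemma. If you carry out your plan, you will find yourself repeating the same local computation (two edges on four vertices replaced by two other edges, tracking how $D(C)$ and the component count change) many times; abstracting it into a lemma first is what the paper does and is the cleaner route. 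One small correction: in your step (iv) you expect the difference $\sq^{\varphi_1+\varphi_2}-\sq^{\varphi_1}-\sq^{\varphi_2}$ to be a coboundary, but in fact the paper obtains exact equality at the cochain level (for compatible choices of matching), because the doubly-appearing intervals from any $c_i$ in both $\varphi_1$ and $\varphi_2$ pair off into squares contributing $0$ to $o$.
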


We will prove the theorem at the end of Subsection \ref{subsec:moves}, once we have developed the necessary combinatorial machinery to do so.

\begin{remark}
The definition of $\sq$ is of course motivated by the formula given in \cite{LipSarSq}, and if the framed $1$-flow category is induced from a framed flow category, the resulting linear map agrees with the second Steenrod square operation, compare also \cite{JLS}.

\end{remark}

\begin{example}\label{example:1}
Consider the $1$-flow category $\cC$ consisting of four objects $a,b_1,b_2,c$ and $0$-dimensional moduli spaces 
\begin{align*}
 \M(a,b_1)&=\{X_1,X_2\}, & \M(a,b_2)&=\{Y_1,Y_2\}\\
 \M(b_1,c)&=\{Z_1,Z_2\}, \mbox{ and} & \M(b_2,c)&=\{W_1,W_2\}.
\end{align*}
Framing $Y_1$ and $Y_2$ negatively, while all other points are framed positively ensures that we can extend $\cC$ to a framed $1$-flow category having four standardly framed intervals between the eight endpoints $(Z_i,X_j)$ and $(W_k,Y_l)$ for $i,j,k,l = 1,2$.

Two possible choices are given by having intervals between $(Z_i,X_j)$ and $(W_i,Y_j)$, or between $(Z_i,X_j)$ and $(W_j,Y_i)$.

If we use the combinatorial matching $(Z_1,Z_2)$ and $(W_1,W_2)$ (note that our only choice is the order for both pairs), the resulting circles occur as follows.
\[
\begin{tikzpicture}
\draw[|-|] (-4,-1) -- (-2,-1);
\draw[|-|] (-4,0) -- (-2,0);
\draw[|-|] (-4,1) -- (-2,1);
\draw[|-|] (-4,2) -- (-2,2);
\draw[|-|] (0,-1) -- (2,-1);
\draw[|-|] (0,0) -- (2,0);
\draw[|-|] (0,1) -- (2,1);
\draw[|-|] (0,2) -- (2,2);
\draw (-4,1) edge[bend right = 30, blue, ->] (-4,-1);
\draw (-4,2) edge[bend right = 30, blue, ->] (-4,0);
\draw (-2,1) edge[bend left = 30, blue, ->] (-2,-1);
\draw (-2,2) edge[bend left = 30, blue, ->] (-2,0);
\draw (0,1) edge[bend right = 30, blue, ->] (0,-1);
\draw (0,2) edge[bend right = 30, blue, ->] (0,0);
\draw (2,0) edge[bend left = 30, blue, ->] (2,-1);
\draw (2,2) edge[bend left = 30, blue, ->] (2,1);
\draw[] (-3.7,-1) node[above,scale=0.6]{$Z_2,X_2$};
\draw[] (-2.3,-1) node[above,scale=0.6]{$W_2,Y_2$};
\draw[] (-3.7,0) node[above,scale=0.6]{$Z_2,X_1$};
\draw[] (-2.3,0) node[above,scale=0.6]{$W_2,Y_1$};
\draw[] (-3.7,1) node[above,scale=0.6]{$Z_1,X_2$};
\draw[] (-2.3,1) node[above,scale=0.6]{$W_1,Y_2$};
\draw[] (-3.7,2) node[above,scale=0.6]{$Z_1,X_1$};
\draw[] (-2.3,2) node[above,scale=0.6]{$W_1,Y_1$};
\draw[] (0.3,-1) node[above,scale=0.6]{$Z_2,X_2$};
\draw[] (1.7,-1) node[above,scale=0.6]{$W_2,Y_2$};
\draw[] (0.3,0) node[above,scale=0.6]{$Z_2,X_1$};
\draw[] (1.7,0) node[above,scale=0.6]{$W_1,Y_2$};
\draw[] (0.3,1) node[above,scale=0.6]{$Z_1,X_2$};
\draw[] (1.7,1) node[above,scale=0.6]{$W_2,Y_1$};
\draw[] (0.3,2) node[above,scale=0.6]{$Z_1,X_1$};
\draw[] (1.7,2) node[above,scale=0.6]{$W_1,Y_1$};
\end{tikzpicture}
\]
The left side is what we get for connecting the points $(Z_i,X_j)$ with $(W_i,Y_j)$, and the right side when we connect $(Z_i,X_j)$ with $(W_j,Y_i)$. On the left we get for both circles $D(C) = 1$, so $\sq^c(a)=0$ which results in the trivial Steenrod square. On the right there is only one circle, with $D(C)=0$, so $\sq^c(a)=1$, resulting in a non-trivial Steenrod square (provided there are no non-trivially framed circles in $\M(a,c)$). We remark that the second possibility arises for the disjoint union of two trefoil knots, compare \cite{JLS2}.
\end{example}

\subsection{Combinatorial proof of Theorem \ref{thm:steensquare}}
\label{subsec:moves}

\begin{definition}\label{def:ogamma}For $\Gamma$ a special graph structure with $\partial V=\emptyset$, define\[o(\Gamma):=|L|+\sum_C\left(1+F(C)+D(C)\right),\]where the sum is taken over all components $C$ of $\Gamma$ which contain a vertex.

(With this notation Definition \ref{def:sqca} can be written as $\sq^\varphi(a):=o(\Gamma_\mathcal{C}(a,\varphi))$.)
\end{definition}

We now introduce a relation on special graph structures which is not going to change the value $o(\Gamma)$. The relation is described in terms of local moves on edges. 

These local moves are given as follows.

\begin{tikzpicture}
\node at (0,0) {(a)};
\node at (1,-1) {$\varepsilon$};
\node at (3,-1) {$\varepsilon$};
\node at (1,1) {$\varepsilon+1$};
\node at (3,1) {$\varepsilon+1$};
\node[circle, draw, inner sep=0pt, minimum size=3pt] at (1,-0.6) {$ $};
\node[circle, draw, inner sep=0pt, minimum size=3pt] at (3,-0.6) {$ $};
\node[circle, draw, inner sep=0pt, minimum size=3pt] at (1,0.6) {$ $};
\node[circle, draw, inner sep=0pt, minimum size=3pt] at (3,0.6) {$ $};
\draw[->, shorten <= 2pt] (1,-0.6) -- (2,-0.6);
\draw[-, shorten >= 2pt] (1.5,-0.6) -- (3,-0.6);
\draw[->, shorten <= 2pt] (1,0.6) -- (2,0.6);
\draw[-, shorten >= 2pt] (1.5,0.6) -- (3,0.6);
\node at (4,0) {$\sim$};
\node at (5,-1) {$\varepsilon$};
\node at (7,-1) {$\varepsilon$};
\node at (5,1) {$\varepsilon+1$};
\node at (7,1) {$\varepsilon+1$};
\node[circle, draw, inner sep=0pt, minimum size=3pt] at (5,-0.6) {$ $};
\node[circle, draw, inner sep=0pt, minimum size=3pt] at (7,-0.6) {$ $};
\node[circle, draw, inner sep=0pt, minimum size=3pt] at (5,0.6) {$ $};
\node[circle, draw, inner sep=0pt, minimum size=3pt] at (7,0.6) {$ $};
\draw[-,shorten <= 2pt, shorten >= 2pt] (5,-0.6) -- (5,0.6);
\draw[-,shorten <= 2pt, shorten >= 2pt] (7,-0.6) -- (7,0.6);
\node at (8,0) {$\sim$};
\node at (9,-1) {$\varepsilon$};
\node at (11,-1) {$\varepsilon$};
\node at (9,1) {$\varepsilon+1$};
\node at (11,1) {$\varepsilon+1$};
\node[circle, draw, inner sep=0pt, minimum size=3pt] at (9,-0.6) {$ $};
\node[circle, draw, inner sep=0pt, minimum size=3pt] at (11,-0.6) {$ $};
\node[circle, draw, inner sep=0pt, minimum size=3pt] at (9,0.6) {$ $};
\node[circle, draw, inner sep=0pt, minimum size=3pt] at (11,0.6) {$ $};
\node[circle, draw, inner sep=0pt, minimum size=15pt] at (12,0) {$ $};
\draw[-,shorten <=2pt, shorten >=2pt] (9,-0.6) -- (11,0.6);
\draw[-,shorten <=2pt,shorten >=2pt] (9,0.6) -- (10,0);
\draw[-,shorten <=2pt,shorten >=2pt] (10,0) -- (11,-0.6);
\end{tikzpicture}

\begin{tikzpicture}
\node at (0,-3) {(b)};
\node at (1,-2) {$\varepsilon$};
\node at (3,-2) {$\varepsilon$};
\node at (1,-4) {$\varepsilon$};
\node at (3,-4) {$\varepsilon$};
\node[circle, draw, inner sep=0pt, minimum size=3pt] at (1,-2.4) {$ $};
\node[circle, draw, inner sep=0pt, minimum size=3pt] at (3,-2.4) {$ $};
\node[circle, draw, inner sep=0pt, minimum size=3pt] at (1,-3.6) {$ $};
\node[circle, draw, inner sep=0pt, minimum size=3pt] at (3,-3.6) {$ $};
\draw[->, shorten <= 2pt] (1,-2.4) -- (2,-2.4);
\draw[-, shorten >= 2pt] (1.5,-2.4) -- (3,-2.4);
\draw[->, shorten <= 2pt] (1,-3.6) -- (2,-3.6);
\draw[-, shorten >= 2pt] (1.5,-3.6) -- (3,-3.6);
\node at (4,-3) {$\sim$};
\node at (5,-2) {$\varepsilon$};
\node at (7,-2) {$\varepsilon$};
\node at (5,-4) {$\varepsilon$};
\node at (7,-4) {$\varepsilon$};
\node[circle, draw, inner sep=0pt, minimum size=3pt] at (5,-2.4) {$ $};
\node[circle, draw, inner sep=0pt, minimum size=3pt] at (7,-2.4) {$ $};
\node[circle, draw, inner sep=0pt, minimum size=3pt] at (5,-3.6) {$ $};
\node[circle, draw, inner sep=0pt, minimum size=3pt] at (7,-3.6) {$ $};
\draw[->, shorten <=2pt] (5,-2.4) -- (5,-3);
\draw[-, shorten >=2pt] (5,-2.6) -- (5,-3.6);
\draw[->, shorten <=2pt] (7,-3.6) -- (7,-3);
\draw[-, shorten >=2pt] (7,-3.4) -- (7,-2.4);
\end{tikzpicture}

\begin{tikzpicture}
\node at (0,-5) {(c)};
\node[circle, draw, inner sep=0pt, minimum size=3pt] at (1,-5.5) {$ $};
\node[circle, draw, inner sep=0pt, minimum size=3pt] at (3,-5.5) {$ $};
\draw[-] (1.05,-5.5) -- (2.95,-5.5) node[above,pos=0.5] {$1$};
\node at (4,-5) {$\sim$};
\node[circle, draw, inner sep=0pt, minimum size=3pt] at (5,-5.5) {$ $};
\node[circle, draw, inner sep=0pt, minimum size=3pt] at (7,-5.5) {$ $};
\node[circle, draw, inner sep=0pt, minimum size=15pt] at (6,-4.75) {$ $};
\draw[-] (5.05,-5.5) -- (6.95,-5.5) node[above,pos=0.5] {$0$};

\node at (0,-6.5) {(d)};
\node[circle, draw, inner sep=0pt, minimum size=3pt] at (1,-7) {$ $};
\node[circle, draw, inner sep=0pt, minimum size=3pt] at (3,-7) {$ $};
\draw[->] (1.05,-7) -- (2,-7);
\draw[-] (1.05,-7) -- (2.95,-7);
\node at (4,-6.5) {$\sim$};
\node[circle, draw, inner sep=0pt, minimum size=3pt] at (5,-7) {$ $};
\node[circle, draw, inner sep=0pt, minimum size=3pt] at (7,-7) {$ $};
\node[circle, draw, inner sep=0pt, minimum size=15pt] at (6,-6.25) {$ $};
\draw[<-] (6,-7) -- (6.95,-7);
\draw[-] (5.05,-7) -- (6.95,-7);

\node at (0,-8) {(e)};
\node[circle, draw, inner sep=0pt, minimum size=15pt] at (1.5,-8) {$ $};
\node[circle, draw, inner sep=0pt, minimum size=15pt] at (2.5,-8) {$ $};
\node at (4,-8) {$\sim$};
\node at (5,-8) {$\emptyset$};
\end{tikzpicture}

Note that the edges in (a), (b) and (d) are from $E-E'$, while the edges in (c) are from $E'$. The circles in (a), (c), (d) and (e) represent loops in $L$. Here (e) means that two such loops can be removed. The symbol $\varepsilon$ stands for an element of $\Z/2$ and is the sign of a vertex. Similarly, the value above an edge stands for its framing value.

We also require the following moves.

\begin{tikzpicture}
\node at (0,0) {(f)};
\node[circle,draw, inner sep=0pt, minimum size=3pt] at (1,0) {$ $};
\node[circle,draw, inner sep=0pt, minimum size=3pt] at (3,0) {$ $};
\node[circle,draw, inner sep=0pt, minimum size=3pt] at (5,0) {$ $};
\node[circle,draw, inner sep=0pt, minimum size=3pt] at (7,0) {$ $};
\node at (1,0.4) {$\varepsilon$};
\node at (3,0.4) {$\varepsilon$};
\node at (5,0.4) {$\varepsilon+1$};
\node at (7,0.4) {$\varepsilon+1$};
\draw[->] (1.05,0) -- (2,0);
\draw[-] (1.05,0) -- (2.95,0);
\draw[-] (3.05,0) -- (4.95,0) node[above,pos=0.5] {$0$};
\draw[->] (5.05,0) -- (6,0);
\draw[-] (5.05,0) -- (6.95,0);
\node at (8,0) {$\sim$};
\node[circle,draw, inner sep=0pt, minimum size=3pt] at (9,0) {$ $};
\node[circle,draw, inner sep=0pt, minimum size=3pt] at (11,0) {$ $};
\node at (9,0.4) {$\varepsilon$};
\node at (11,0.4) {$\varepsilon+1$};
\draw[-] (9.05,0) -- (10.95,0);
\end{tikzpicture}

\begin{tikzpicture}
\node at (0,-1.5) {(g)};
\node[circle,draw, inner sep=0pt, minimum size=3pt] at (1,-1.5) {$ $};
\node[circle,draw, inner sep=0pt, minimum size=3pt] at (3,-1.5) {$ $};
\node[circle,draw, inner sep=0pt, minimum size=3pt] at (5,-1.5) {$ $};
\node[circle,draw, inner sep=0pt, minimum size=3pt] at (7,-1.5) {$ $};
\node at (1,-1.1) {$\varepsilon$};
\node at (5,-1.1) {$\varepsilon$};
\node at (3,-1.1) {$\varepsilon+1$};
\node at (7,-1.1) {$\varepsilon+1$};
\draw[-] (1.05,-1.5) -- (2.95,-1.5) node [above,pos=0.5] {$0$};
\draw[-] (5.05,-1.5) -- (6.95,-1.5) node[above,pos=0.5] {$0$};
\draw[-] (3.05,-1.5) -- (4.95,-1.5);
\node at (8,-1.5) {$\sim$};
\node[circle,draw, inner sep=0pt, minimum size=3pt] at (9,-1.5) {$ $};
\node[circle,draw, inner sep=0pt, minimum size=3pt] at (11,-1.5) {$ $};
\node at (9,-1.1) {$\varepsilon$};
\node at (11,-1.1) {$\varepsilon+1$};
\draw[-] (9.05,-1.5) -- (10.95,-1.5) node[above,pos=0.5] {$0$};
\end{tikzpicture}

\begin{tikzpicture}
\node at (0,-3) {(h)};
\node[circle,draw, inner sep=0pt, minimum size=3pt] at (1,-3) {$ $};
\node[circle,draw, inner sep=0pt, minimum size=3pt] at (3,-3) {$ $};
\draw[-, bend right] (1.05,-3) to (2.95,-3);
\draw[-, bend left] (1.05,-3) to (2.95,-3);
\node at (2,-2.5) {$0$};
\node at (4,-3) {$\sim$};
\node[circle,draw, inner sep=0pt, minimum size=15pt] at (5,-3) {$ $};

\node at (0,-5) {(i)};
\node at (1,-4) {$\varepsilon$};
\node at (3,-4) {$\varepsilon$};
\node at (1,-6) {$\varepsilon+1$};
\node at (3,-6) {$\varepsilon$};
\node[circle, draw, inner sep=0pt, minimum size=3pt] at (1,-4.4) {$ $};
\node[circle, draw, inner sep=0pt, minimum size=3pt] at (3,-4.4) {$ $};
\node[circle, draw, inner sep=0pt, minimum size=3pt] at (1,-5.6) {$ $};
\node[circle, draw, inner sep=0pt, minimum size=3pt] at (3,-5.6) {$ $};
\draw[->, shorten <= 2pt] (1,-4.4) -- (2,-4.4);
\draw[-, shorten >= 2pt] (1.5,-4.4) -- (3,-4.4);
\draw[-] (1.05,-5.6) -- (2.95,-5.6);
\node at (4,-5) {$\sim$};
\node at (5,-4) {$\varepsilon$};
\node at (7,-4) {$\varepsilon$};
\node at (5,-6) {$\varepsilon+1$};
\node at (7,-6) {$\varepsilon$};
\node[circle, draw, inner sep=0pt, minimum size=3pt] at (5,-4.4) {$ $};
\node[circle, draw, inner sep=0pt, minimum size=3pt] at (7,-4.4) {$ $};
\node[circle, draw, inner sep=0pt, minimum size=3pt] at (5,-5.6) {$ $};
\node[circle, draw, inner sep=0pt, minimum size=3pt] at (7,-5.6) {$ $};
\draw[-, shorten <=2pt, shorten >=2pt] (5,-4.4) -- (5,-5.6);
\draw[->, shorten <=2pt] (7,-4.4) -- (7,-5);
\draw[-, shorten >=2pt, shorten <=2pt] (7,-4.4) -- (7,-5.6);
\end{tikzpicture}

The equivalence relation $\sim$ on special graph structures is the transitive closure of these local moves (a) - (i). Notice that the equivalence relation preserves the boundary points of a special graph structure.

\begin{lemma}
Let $\Gamma$ be a special graph structure with $\partial V=\emptyset$. Then $o(\Gamma)$ is invariant under $\sim$.
\end{lemma}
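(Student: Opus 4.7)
The plan is to verify invariance by checking each of the nine local moves (a)--(i) separately. It is useful to rewrite
\[
o(\Gamma) \equiv |L| + N + \sum_{e\in E'} f(e) + \sum_{C} D(C) \pmod{2},
\]
where $N$ is the number of components of $\Gamma$ containing a vertex, and we have used that each $E'$-edge lies in exactly one component. The task is then to show, for each move, that the changes in the four summands cancel modulo $2$. Note that $\partial V = \emptyset$ is preserved by every move, so $\Gamma$ is always a disjoint union of cycles together with the loop set $L$.

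Several of the moves are essentially immediate. Move (e) deletes two loops, leaving everything else unchanged. Move (c) replaces a framing-$1$ edge with a framing-$0$ edge together with an added loop, so the drop of $1$ in $\sum f$ is compensated by the increase of $1$ in $|L|$. Moves (d), (g), and (h) are similar telescoping exchanges between $|L|$, $\sum f$, $\sum D$, and $N$, and I would verify each by a direct local computation: no cycle of $\Gamma$ is genuinely reorganized, only relabeled.

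For moves (a), (b), (f), and (i), the rewiring can change which vertices lie in which components, so the $+1$ contributed per component to $N$ must be tracked carefully. The key observation is that, since $\partial V = \emptyset$, every vertex in the local picture has exactly one edge leaving the picture, so the ``rest of $\Gamma$'' induces a perfect matching on the external endpoints of those edges. There are only finitely many such matchings up to symmetry (for the four-vertex pictures, just a small enumeration of ways the external edges can pair up the boundary endpoints). For each pattern I would assemble the local configuration with the external matching into full cycles of $\Gamma$, then tabulate $N$, $\sum f$, and $\sum D$ on both sides of the move and check equality modulo $2$. A convenient bookkeeping device is to fix a traversal direction on each cycle once and for all (using the lemma just proved, which shows $D(C)$ is independent of this choice) before the move is performed, and then re-traverse the modified cycles on the other side of the move.

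The main obstacle will be move (a), the unique three-way equivalence. Here the three local pictures differ by an extra loop, a pair of parallel directed edges, and a pair of framed vertical edges, and the rewiring can either preserve the number of local components, merge two into one, or split one into two, depending on the external matching. The point is that the ``crossing'' rewiring in the rightmost configuration changes the cycle structure in a way that flips $N$ by an odd amount, and this is precisely compensated by the extra loop; meanwhile the replacement of a pair of directed edges by a pair of framed vertical edges shifts contributions between $\sum D$ and $\sum f$ by the same parity. Once move (a) has been worked out by this external-matching case analysis, moves (b), (f), and (i) follow by the same technique with strictly fewer configurations to enumerate, completing the proof.
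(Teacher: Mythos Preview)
Your approach is correct and essentially the same as the paper's: both proofs proceed move by move, dispatching the easy moves directly and handling the remaining ones by a case analysis on how the rest of the graph connects the local endpoints (what you call the ``external matching'' and what the paper phrases as whether the affected edges lie in the same or different components, and in which order). Your decomposition $o(\Gamma)\equiv |L|+N+\sum_{e\in E'}f(e)+\sum_C D(C)$ is a tidy bookkeeping device the paper does not make explicit, but the underlying case analysis is the same; note that the paper classes move (f) among the obvious ones (it is a contraction within a single path, so the component structure does not change), and in fact only writes out move (i) in detail.
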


\begin{proof}
We need to check this for all local moves (a) - (i). For (c) - (h) this is obvious. The remaining cases are similar, and we will only show (i). 

Let $v,w,x$ and $y$ be vertices with $s(v)=s(w)=s(y)=s(x)+1$ and $e_1,e_2\in E''$ edges with $e_1=(v,w)$ the directed edge and $e_2=(x,y)$ the non-directed edge as in move (i). As $\partial V=\emptyset$, both edges are part of circle components $C_1$ and $C_2$ with $e_i\subset C_i$ for $i=1,2$. We can assume that $f(e)=0$ for all $e\in E'$.

Assume that $C_1$ and $C_2$ are different components. Let $a\in \Z/2$ be the number of directed edges in $C_1$ that point in a different direction than $e_1$. Also, let $b\in \Z/2$ be the number of directed edges in $C_2$ pointing in a chosen direction. The contribution of $C_1$ and $C_2$ to $o(\Gamma)$ is $a+b$.

After performing the move (i) we have a new special graph structure $\Gamma'$ where the components $C_1$ and $C_2$ are replaced by one component $C$, and we have edges $e_1'=(v,x)$ and $e_2'=(w,y)$. Note that $e_2'$ is the directed edge, and there are $a$ directed edges between $v$ and $w$ pointing in the same direction as $e_2'$, and there are $b$ directed edges between $y$ and $x$ pointing in the same direction as $e_2'$. The contribution of $C$ to $o(\Gamma')$ is therefore $1+a+b+1 = a+b$, as there are $a+b+1$ directed edges pointing in the same direction as $e_2'$ in $C$ (this includes $e_2'$). As all other components in $\Gamma'$ are unchanged from $\Gamma$, we get $o(\Gamma)=o(\Gamma')$.

Now assume that $C_1=C_2$. If we follow on from $w$ in the direction of $e_1$, we either reach $x$ or $y$ first. If we reach $x$ first, then performing the (i) move leads to a graph $\Gamma'$ where both edges $e_1'=(v,x)$ and $e_2'=(w,y)$ are again in the same component. Let $a\in \Z/2$ be the number of directed edges on the path between $w$ and $c$ which point in the same direction as $e_1$, and let $b\in \Z/2$ be the number of directed edges on the path between $y$ and $v$ which point in the same direction as $e_1$. The contribution of $C_1$ to $o(\Gamma)$ is then given by $1+a+b+1=a+b$.
Let $C$ be the new component in $\Gamma'$ containing $e_1'$ and $e_2'$. The contribution of $C$ to $\Gamma'$ is also $a+b$, as there are $1+b$ directed edges between $w$ and $v$, starting with $e_2'$, and there are $a$ directed edges on the path between $x$ and $w$ pointing in the same direction as $e_2'$. Note that since $s(x)\not=s(w)$ there are indeed the same number of directed edges in a chosen direction between $x$ and $w$, which in this case is $a$. Hence we get again $o(\Gamma)=o(\Gamma')$.

If we reach $y$ first, then performing the (i) move leads to a graph $\Gamma'$ where the edges $e_1'=(v,x)$ and $e_2'=(w,y)$ are now in different components. This case is now basically just the reverse of the first case where we changed from two circles to one. The same argument thus shows $o(\Gamma)=o(\Gamma')$.
\end{proof}

\begin{proof}[Proof of Theorem \ref{thm:steensquare}]
Let the cocycle $\varphi\in C^k(\cC;\Z/2)$ be represented by objects $c_1,\ldots,c_m\in \Ob(\cC)$ satisfying $|c_i|=k$ for all $i=1,\ldots,m$. We begin by showing that $\sq^\varphi$ is a cocycle. Let $u\in \Ob(\cC)$ satisfy $|u|=k+3$. Then
\[
 \partial \M(u,c_i) = \coprod_a\M(a,c_i)\times \M(u,a) \sqcup \coprod_b \M(b,c_i)\times \M(u,b)
\]
where the first union is over all $a\in \Ob(\cC)$ with $|a|=k+2$ and the second union over all $b\in \Ob(\cC)$ with $|b|=k+1$. We associate a special graph structure $\Gamma(u,c_i)$ to this as follows. The vertices are given by
\[
 \coprod_{a,b} \M(b,c_i)\times \M(a,b)\times \M(u,a)
\]
with $s(C,B,A) = s(C)+s(B)+s(A)$. As edges in $E'$ we use $J\times \{A\}\subset \M(a,c_i)\times \M(u,a)$, where $J\subset \M(a,c_i)$ is an interval component. We set $f(J\times \{A\})=f(J)$. The edges in $E-E'$ are given by the $\{C\}\times I \subset \M(b,c_i)\times \M(u,b)$ with $I$ an interval component in $\M(u,b)$. Furthermore, if such an interval $\{C\}\times I$ satisfies $s(C)+f(I)=0$, we add a loop in $L$. Finally, any non-trivially framed circle $\{C\}\times S^1\subset \M(b,c_i)\times \M(u,b)$ and $S^1\times \{A\}\subset \M(a,c_i)\times \M(u,a)$ gives rise to a loop in $L$. Note that $E''=\emptyset$.

With this definition, we get $o(\Gamma(u,c_i))=0$, because loops and framings were chosen so that we get exactly the sum from the compatibility condition for $\partial\M(u,c_i)$.

Also form the special graph structure $\Gamma(u,\varphi)$ as the disjoint union of the $\Gamma(u,c_i)$

We want to replace edges coming from $\{C\}\times I$ with edges coming from the combinatorial boundary matching $\mathcal{C}$. Note that $C\in \M(b,c_i)$ is matched to some $C'\in \M(b,c_j)$, and we therefore also have an edge coming from $\{C'\}\times I$. We can then use a move (a) to replace these two edges with edges coming from the boundary matching. If $s(C)=s(C')$, we have $s(C)+f(I)=0$ if and only if $s(C')+f(I)$ so possible extra loops coming from this condition cancel each other. If $s(C)\not =s(C')$ this condition creates exactly one extra loop, which cancels the loop coming from move (a).

Replacing all the loops $\{C\}\times I$ this way leads to the special graph structure
\[
 \coprod_a \Gamma_\mathcal{C}'(a,\varphi) \times \M(u,a)
\]
where $\Gamma'_\mathcal{C}(a,\varphi)\times \{A\}$ only differs from $\Gamma_\mathcal{C}(a,\varphi)$ in that the sign functions for vertices differ by $s(A)$. We thus get that $o(\Gamma'_\mathcal{C}(a,\varphi)\times \{A\})=o(\Gamma_\mathcal{C}(a,\varphi)$. Furthermore,
\begin{align*}
 0 &= o\left(\coprod_a \Gamma_\mathcal{C}'(a,\varphi) \times \M(u,a)\right) \\
 &=\sum_a o(\Gamma_\mathcal{C}(a,\varphi))\cdot [u:a] \\
 &=\delta(\sq^\varphi)(u).
\end{align*}
Therefore $\sq^\varphi$ is a cocycle.

To see that the cohomology class of $\sq^\varphi$ does not depend on the combinatorial boundary matching, first consider the order of a directed edge $(C,C')$ with $C\in \M(b,c_i)$ and $C'\in \M(b,c_j)$ for some $b\in \Ob(\cC)$ with $|b|=k+1$. Then $o(\Gamma_{\mathcal{C}'}(a,\varphi))$ differs from $o(\Gamma_\mathcal{C}(a,\varphi))$ in that every occurance of $(C,B)\in \M(b,c_j)\times \M(a,b)$ leads to a $+1$. In other words, $o(\Gamma_{\mathcal{C}'}(a,\varphi)) + o(\Gamma_{\mathcal{C}}(a,\varphi)) = [a:b]\in \Z/2$. This is true for every $a\in \Ob(\cC)$ with $|a|=k+2$, so this difference is compensated by adding $\delta(b)$ to $\sq^\varphi$.

If we are given matchings $(C_1,C_2)$ and $(C_3,C_4)$, we can replace this by matchings $(C_1,C_3)$ and $(C_2,C_4)$ which in the special graph structures correspond to moves (a) or (b) depending on signs of the points involved, again for all combinations with $A\in \M(a,b)$. So any possible circles arising from move (a) would again be compensated by $\delta(b)$. As we can go between any boundary matching using these steps, we see that the cohomology class does not depend on it.

If we are given two cocycles $\varphi_1$ and $\varphi_2$ we now show $\sq^{\varphi_1+\varphi_2}=\sq^{\varphi_1}+sq^{\varphi_2}$. Taking respective combinatorial matchings $\mathcal{C}_1$ and $\mathcal{C}_2$ for $\varphi_1$ and $\varphi_2$, we will have in general that some of the objects $c_i$ are used for representing both $\varphi_1$ and $\varphi_2$, so are not needed for $\varphi_1+\varphi_2$. We can begin by letting $\mathcal{C}$ be the disjoint union of $\mathcal{C}_1$ and $\mathcal{C}_2$, and setting
\[
 \Gamma_\mathcal{C}(a,\varphi_1+\varphi_2) = \Gamma_{\mathcal{C}_1}(a,\varphi_1) \sqcup \Gamma_{\mathcal{C}_2}(a,\varphi_2).
\]
Let $c_i$ be representing both $\varphi_1$ and $\varphi_2$ and assume that $I\subset \M(a,c_i)$ is an interval with endpoints $(C,B)$ and $(C',B')$. Let $C$ be matched to $C_l$ by $\mathcal{C}_l$ and $C'$ be matched to $\mathcal{C}_l$ for $l=1,2$. In $\Gamma_\mathcal{C}(a,\varphi_1+\varphi_2)$ the interval $I$ occurs twice, and we can use the moves (a), (b) and (i) to change $\Gamma_\mathcal{C}(a,\varphi_1+\varphi_2)$ without changing $o(\Gamma_\mathcal{C}(a,\varphi_1+\varphi_2))$.
\begin{center}
\begin{tikzpicture}
\node[circle,draw, inner sep=0pt, minimum size=3pt] at (1,0) {$ $};
\node[circle,draw, inner sep=0pt, minimum size=3pt] at (2.5,0) {$ $};
\node[circle,draw, inner sep=0pt, minimum size=3pt] at (4,0) {$ $};
\node[circle,draw, inner sep=0pt, minimum size=3pt] at (5.5,0) {$ $};
\node at (1,-0.4) {$(C_1,B)$};
\node at (2.5,-0.4) {$(C,B)$};
\node at (3.25,0.25) {$I$};
\node at (4,-0.4) {$(C',B')$};
\node at (5.5,-0.4) {$(C'_1,B')$};
\draw[-,blue] (1.05,0) -- (2.45,0);
\draw[-] (2.55,0) -- (3.95,0);
\draw[-,blue] (4.05,0) -- (5.45,0);
\draw[-] (0.55,0) -- (0.95,0);
\draw[-] (5.55,0) -- (5.95,0);
\node at (6.5,0.7) {$\sim$};

\node[circle,draw, inner sep=0pt, minimum size=3pt] at (7.5,0) {$ $};
\node[circle,draw, inner sep=0pt, minimum size=3pt] at (9,0) {$ $};
\node[circle,draw, inner sep=0pt, minimum size=3pt] at (10.5,0) {$ $};
\node[circle,draw, inner sep=0pt, minimum size=3pt] at (12,0) {$ $};
\node at (7.5,-0.4) {$(C_1,B)$};
\node at (9,-0.4) {$(C,B)$};
\node at (9.75,0.25) {$I$};
\node at (10.5,-0.4) {$(C',B')$};
\node at (12,-0.4) {$(C'_1,B')$};
\draw[->,blue] (7.5,0.05) -- (7.5,0.75);
\draw[-,blue] (7.5,0.75) -- (7.5,1.45);
\draw[-] (9.05,0) -- (10.45,0);
\draw[->,blue] (10.5,0.05) -- (10.5,0.75);
\draw[-,blue] (10.5,0.75) -- (10.5,1.45);
\draw[-] (7.05,0) -- (7.45,0);
\draw[-] (12.05,0) -- (12.45,0);

\node[circle,draw, inner sep=0pt, minimum size=3pt] at (1,1.5) {$ $};
\node[circle,draw, inner sep=0pt, minimum size=3pt] at (2.5,1.5) {$ $};
\node[circle,draw, inner sep=0pt, minimum size=3pt] at (4,1.5) {$ $};
\node[circle,draw, inner sep=0pt, minimum size=3pt] at (5.5,1.5) {$ $};
\node at (1,1.9) {$(C_2,B)$};
\node at (2.5,1.9) {$(C,B)$};
\node at (3.25,1.25) {$I$};
\node at (4,1.9) {$(C',B')$};
\node at (5.5,1.9) {$(C'_2,B')$};
\draw[-,blue] (1.05,1.5) -- (2.45,1.5);
\draw[-] (2.55,1.5) -- (3.95,1.5);
\draw[-,blue] (4.05,1.5) -- (5.45,1.5);
\draw[-] (0.55,1.5) -- (0.95,1.5);
\draw[-] (5.55,1.5) -- (5.95,1.5);

\node[circle,draw, inner sep=0pt, minimum size=3pt] at (7.5,1.5) {$ $};
\node[circle,draw, inner sep=0pt, minimum size=3pt] at (9,1.5) {$ $};
\node[circle,draw, inner sep=0pt, minimum size=3pt] at (10.5,1.5) {$ $};
\node[circle,draw, inner sep=0pt, minimum size=3pt] at (12,1.5) {$ $};
\node at (7.5,1.9) {$(C_2,B)$};
\node at (9,1.9) {$(C,B)$};
\node at (9.75,1.25) {$I$};
\node at (10.5,1.9) {$(C',B')$};
\node at (12,1.9) {$(C'_2,B')$};
\draw[->,blue] (9,0.05) -- (9,0.75);
\draw[-,blue] (9,0.75) -- (9,1.45);
\draw[-] (9.05,1.5) -- (10.45,1.5);
\draw[->,blue] (12,0.05) -- (12,0.75);
\draw[-,blue] (12,0.75) -- (12,1.45);
\draw[-] (7.05,1.5) -- (7.45,1.5);
\draw[-] (12.05,1.5) -- (12.45,1.5);
\end{tikzpicture}
\end{center}
The square in the middle involving both intervals $I$ contributes $0$ to $o(\Gamma_\mathcal{C}(a,\varphi_1+\varphi_2))$ and can be removed. Note that the directed edges may be in different places, but the square needs to have directed edges. We can now remove $C$ and $C'$ from $\mathcal{C}$ and match $C_1$ with $C_2$ and $C_1'$ with $C_2'$. After finitely many steps we get the right special graph structure for $\sq^{\varphi_1+\varphi_2}$, and as the value $o(\Gamma_\mathcal{C}(a,\varphi_1+\varphi_2))$ never changed, we have $\sq^{\varphi_1+\varphi_2}=\sq^{\varphi_1}+sq^{\varphi_2}$.

It remains to show that $\sq^\varphi$ is a coboundary for $\varphi$ a coboundary. So let $d\in \Ob(\cC)$ satisfy $|d|=k-1$ and let $c_1,\ldots,c_m$ be the objects satisfying $|c_i|=k$ and $\M(c_i,d)\not=\emptyset$. Let us first assume that for each $i$, $\M(c_i,d)$ contains exactly one point.

If $a\in \Ob(\cC)$ satisfies $|a|=k+2$, consider 
\[
 \partial\M(a,d) = \coprod_b \M(b,d) \times \M(a,b) \sqcup \coprod_{c'} \M(c',d) \times \M(a,c')
\]
where the first disjoint union is over all objects $b$ with $|b|=k+1$ and the second over objects $c'$ with $|c'|=k$. Note that we can restrict the second disjoint union to the objects $c_1,\ldots,c_m$. Define a special graph structure $\Gamma(a,d)$ as follows. Vertices are given by
\[
 \coprod_{b, i} \M(c_i,d)\times \M(b,c_i)\times \M(a,b)
\]
and we set $s(D,C,B)=s(D)+s(C)+s(B)$. As edges in $E'$ we use $\{D\}\times I \subset \M(c_i,d)\times \M(a,c_i)$, where $I$ is an interval component. We also set $f(\{D\}\times I)=s(D)+f(I)$. The edges in $E-E'$ are given by $J\times \{B\}\subset \M(b,d)\times \M(a,b)$, where $J$ is an interval component. For every such interval $J\times \{B\}$ we add a loop to $L$ provided that $f(J)=0$. Finally, any non-trivially framed circle $\{D\}\times S^1\subset \M(c_i,d)\times \M(a,c_i)$ and $S^1\times \{B\}\subset \M(b,d)\times \M(a,b)$ gives rise to a loop in $L$. The set of directed edges $E''=\emptyset$.

Note that $\Gamma(a,d)$ is very similar to the construction of $\Gamma(u,c_i)$ at the beginning of the proof, but the roles of $E'$ and $E-E'$ are reversed. Nevertheless, we get again that $o(\Gamma(a,d))=0$ from the compatibility condition of $\partial\M(a,d)$.

Intervals $J\subset \M(b,d)$ have endpoints $(D_i,C)\in \M(c_i,d)\times \M(b,c_i)$ and $(D_j,C')\in \M(c_j,d)\times \M(b,c_j)$ for some $i,j$. As we assume that each $\M(c_i,d)$ has at most one point, we can use these intervals to define the combinatorial boundary matching $\mathcal{C}$. Namely, we match $C$ and $C'$. If $s(C)=s(C')$, then $s(D_i)\not=s(D_j)$ and we choose the ordering $(C,C')$ if and only if $s(D_i)=0$.

Consider an edge $\{D_i\}\times I \subset \M(c_i,d)\times \M(a,c_i)$ in $\Gamma(a,d)$. There is a corresponding edge $I$ in $\M_\mathcal{C}(a,\delta(d))$, but if $s(D_i)=1$, the vertices have different sign, and the contribution to $o(\Gamma(a,d))$ and $o(\Gamma_\mathcal{C}(a,\delta(d)))$ differ by $1$. However, we can perform a move of type (a) in $\Gamma(a,d)$ as follows:
\begin{center}
\begin{tikzpicture}
\node[circle,draw, inner sep=0pt, minimum size=3pt] at (1,0) {$ $};
\node[circle,draw, inner sep=0pt, minimum size=3pt] at (2.5,0) {$ $};
\node[circle,draw, inner sep=0pt, minimum size=3pt] at (1,1) {$ $};
\node[circle,draw, inner sep=0pt, minimum size=3pt] at (2.5,1) {$ $};
\node at (1,-0.4) {$\varepsilon$};
\node at (1,1.4) {$\varepsilon+1$};
\node at (2.5,1.4) {$\varepsilon$};
\node at (2.5,-0.4) {$\varepsilon+1$};
\draw[-,blue] (1,0.05) -- (1,0.95);
\node at (0.3,0.5) {$J\times\{B\}$};
\draw[-,blue](2.5,0.05) -- (2.5,0.95);
\node at (3.3,0.5) {$J'\times \{B'\}$};
\draw[-] (1.05,1) -- (2.45,1);
\node at (1.75,0.75) {$\{D_i\}\times I$};
\draw[-] (0.55,0) -- (0.95,0);
\draw[-] (2.55,0) -- (2.95,0);
\node at (4.5,0.5) {$=$};
\node[circle,draw, inner sep=0pt, minimum size=3pt] at (5.5,0) {$ $};
\node[circle,draw, inner sep=0pt, minimum size=3pt] at (7,0) {$ $};
\node[circle,draw, inner sep=0pt, minimum size=3pt] at (5.5,1) {$ $};
\node[circle,draw, inner sep=0pt, minimum size=3pt] at (7,1) {$ $};
\node at (5.5,-0.4) {$\varepsilon$};
\node at (5.5,1.4) {$\varepsilon$};
\node at (7,1.4) {$\varepsilon+1$};
\node at (7,-0.4) {$\varepsilon+1$};
\draw[-] (5.05,0) -- (5.45,0);
\draw[-] (7.05,0) -- (7.45,0);
\draw[-] (5.55,1) -- (6.95,1);
\draw[-,blue] (5.55,0) -- (6.95,1);
\draw[-,blue] (5.55,1) -- (6.2,0.55);
\draw[-,blue] (6.3,0.45) -- (6.95,0);
\node at (8,0.5) {$\sim$};
\node[circle,draw, inner sep=0pt, minimum size=3pt] at (9,0) {$ $};
\node[circle,draw, inner sep=0pt, minimum size=3pt] at (10.5,0) {$ $};
\node[circle,draw, inner sep=0pt, minimum size=3pt] at (9,1) {$ $};
\node[circle,draw, inner sep=0pt, minimum size=3pt] at (10.5,1) {$ $};
\node at (9,-0.4) {$\varepsilon$};
\node at (9,1.4) {$\varepsilon$};
\node at (10.5,1.4) {$\varepsilon+1$};
\node at (10.5,-0.4) {$\varepsilon+1$};
\draw[-] (8.55,0) -- (8.95,0);
\draw[-] (10.55,0) -- (10.95,0);
\draw[-] (9.05,1) -- (10.45,1);
\draw[->,blue] (9,0.05) -- (9,0.55);
\draw[-,blue] (9,0.55) -- (9,0.95);
\draw[->,blue] (10.5,0.05) -- (10.5,0.55);
\draw[-,blue] (10.5,0.55) -- (10.5,0.95);
\end{tikzpicture}
\end{center}
The right graph has $I$ as in $\Gamma_\mathcal{C}(a,\delta(d))$. Note that $J$ and $J'$ have as endpoints $(D_j,C')$ and $(D_k,C'')$ at the bottom. If $s(D_j)=0=s(D_k)$, then the blue directed edges are exactly as the boundary matching prescribes, and the contribution of $s(D_i)=1$ corresponds to the $1$ coming from the directed edge pointing with an orientation. If $s(D_k)=1$, we have to do another such move to get the edge unoriented again. Repeating this throughout will show that the contributions of $s(D_i)$ to $o(\Gamma(a,d))$ is the same as the contribution of the edges coming from the boundary matching via directed edges to $o(\Gamma_\mathcal{C}(a,\delta(d)))$.

There are still some contributions to $o(\Gamma(a,d))$ coming from loops which are not present in $o(\Gamma_\mathcal{C}(a,\delta(d)))$, namely coming from edges $J\times \{B\}\subset \M(b,d)\times \M(a,b)$ with $f(J)=0$ and non-trivially framed circles $S^1\times \{B\} \subset \M(b,d)\times \M(a,b)$. But for each $B$ we only need to check how many contributions from $\M(b,d)$ there are and compensate with $\delta(b)$. This shows that $\sq^{\delta(d)}$ differs from $0$ by a coboundary.

Finally, assume that a $\M(c_i,d)$ contains two different points $X_1,X_2$. We can form $\Gamma(a,d)$ as before. Any interval $I\subset \M(a,c_i)$ leads to two edges $\{X_1\}\times I$ and $\{X_2\}\times I$ in $\Gamma(a,d)$ contributing $s(X_1)+s(X_2)$ to $o(\Gamma(a,d))$. We can replace these two edges by edges between its endpoints using a move (a) without changing $o(\Gamma(a,d))$. We can do this with any pairs of points in $\M(c_i,d)$ until there is only one point (or none) in each $\M(c_i,d)$ not paired this way.
We can then use these remaining points to obtain a combinatorial boundary matching as in the previous case. The same argument as before shows that $\sq^{\delta(d)}$ differs from $0$ by a coboundary.
\end{proof}

It is a great effort to turn the topological data of a framed flow category into the combinatorial data of a 1-flow category, and then to verify the formula for the second Steenrod square, given a combinatorial boundary matching.  Nevertheless, it can be done and was carried in \cite{LipSarSq} for the Lipshitz-Sarkar framed flow category and also in \cite{JLS} in general.  Treating this (considerable) work as a black box, one is able to make the following remark.

\begin{remark}\label{rem:topsquare}
Theorem \ref{thm:steensquare} can alternatively be proved using the topological Steenrod square as follows. First we can truncate the framed 1-flow category $\cC$ so that all objects are in degree $k-1$ to $k+3$. This does not affect the cohomology groups $H^k(\cC;\Z/2)$ and $H^{k+2}(\cC;\Z/2)$. We now want to describe a framed flow category $\cC'$ which extends $\cC$ and so that the cohomology in degree $k$ and $k+2$ remains the same

We use the same objects for $\cC'$ as for $\cC$, and it is straightforward to frame the $0$- and $1$-dimensional moduli spaces to extend $\cC$. The $2$-dimensional moduli spaces of $\cC'$ can then be given some framing because of the compatibility condition that $\cC$ satisfies.  For all pairs of objects $a$ and $e$ with $|a|=k+3$ and $|e|=k-1$, this gives a framed surface $S(a,e)$ consisting of all broken flowlines between $a$ and $e$.
If the framing is trivial we can extend $S(a,e)$ to a $3$-dimensional framed moduli space $\M(a,e)$, and we are done.

If the framing is non-trivial, we add an extra object $c_{a,e}$ of grading $b+1$ together with non-trivially framed circles as the moduli spaces $\M(a,c_{a,e})$ and $\M(c_{a,e},e)$.  This then adds a non-trivially framed component to $S(a,e)$, hence $S(a,e)$ becomes trivially framed overall.  We are then able to choose a $3$-dimensional framed moduli space $\M(a,e)$ with $\partial \M(a,e) = S(a,e)$, and the resulting framed flow category $\cC'$ has a well defined second Steenrod square from \cite{JLS}.
Furthermore, the formula agrees exactly with the formula for $\cC$.
\end{remark}

As a neat application of Theorem \ref{thm:steensquare}, we finish this section with an example showing that not every framed $1$-flow category extends to a framed flow category.

\begin{example}\label{ex:adem}
Let $\cC$ have four objects $a,b,d,e$ with $k+3=|a|=|b|+1=|d|+3=|e|+4$ and let $\M(a,b)$ and $\M(d,e)$ both consist of two points with positive sign. Furthermore, let $\M(b,d)$ contain a non-trivially framed circle. This satisfies the compatibility condition and thus defines a framed $1$-flow category. But notice that
\[
 \Sq^1 \Sq^2 \Sq^1(e) = a \not= 0
\]
while $\Sq^2(e) = 0$. Therefore $\cC$ cannot be turned into a framed flow category, as the resulting space would violate the Adem relation $\Sq^1 \Sq^2 \Sq^1 = \Sq^2 \Sq^2$. Note that adding an object $c$ with $|c|=k+2$ as in Remark \ref{rem:topsquare} establishes exactly this Adem relation.
\end{example}

\section{Morse moves in 1-flow categories}\label{sec:handle}

In \cite{JLS2} and \cite{ALPODS2} the authors, together with Dan Jones, defined a series of \emph{flow category moves} -- called \emph{handle slides}, \emph{handle cancellation}, and \emph{Whitney trick} -- for modifying and algorithmically simplifying a framed flow category. Section \ref{sec:handle} is dedicated to an explicit description of the restrictions of these moves to 1-flow categories, with a mind to their application in Section \ref{sec:snf_for_flow_categories} as part of the new algorithm for computing Steenrod squares.

\subsection{Combinatorial handle slides}
The handle slides of \cite{ALPODS2} have obvious analogues in framed $1$-flow categories.

\begin{definition}
 Let $(\cC,s,f)$ be a framed $1$-flow category, and let $x,y\in \Ob(\cC)$ satisfy $|x|=|y|$. For $\varepsilon \in \{0,1\}$ let $(\cC^\varepsilon_S,s',f')$ be the following framed $1$-flow category:
\begin{itemize}
 \item The objects of $\cC^\varepsilon_S$ are in one-to-one correspondence with the objects of $\cC$, and for $a\in \Ob(\cC)$ we write $a'\in \Ob(\cC^\varepsilon_S)$ for the corresponding object. Furthermore, $|a'|=|a|$.
 \item The moduli spaces are given by
\begin{align*}
 \M(x',b') &= \M(x,b)\sqcup \M(y,b)\\
 \M(a',y') &= \M(a,x)\sqcup \M(a,y)
\end{align*}
If $|a|=|x|+1=|c|+2$ we also have
\[
 \M(a',c') = \M(a,c) \sqcup \M(y,c) \times [0,1]\times \M(a,x).
\]
\item The sign assignment $s'$ is defined by
\[
 s'(X) = \left\{ 
\begin{array}{cl}
 s(X) & \mbox{if }X\in \M(a,b)\subset \M(a',b')\\
 \varepsilon + s(X) & \mbox{if }X\in \M(y,b)\subset \M(x',b')\\
 1+\varepsilon + s(X) & \mbox{if }X \in \M(a,x) \subset \M(a',y')
\end{array}
\right.
\]
\item The framing $f'$ is defined by
\[
 f'(I) = \left\{
\begin{array}{cl}
 f(I) & \mbox{if } I \subset \M(a,b)\subset \M(a',b')\\
 f(I) & \mbox{if } I \subset \M(y,b)\subset \M(x',b')\\
 1+\varepsilon+f(I) & \mbox{if } I \subset \M(a,x) \subset \M(a',y')\\
 1+\varepsilon+s(B) & \mbox{if } I = \{B\}\times [0,1]\times \{A\}\subset \M(y,b)\times [0,1]\times \M(a,x)
\end{array}
\right.
\]
\end{itemize}
\end{definition}

We also call the case $\varepsilon=0$ a $(+)$-slide and the case $\varepsilon=1$ a $(-)$-slide. We need to check that $(\cC^\varepsilon_S,s',f')$ is indeed a framed $1$-flow category.

\begin{lemma}
 Let $(\cC,s,f)$ be a framed $1$-flow category, and let $x,y\in \Ob(\cC)$ satisfying $|x|=|y|$. For $\varepsilon\in \{0,1\}$ the collection $(\cC^\varepsilon_S,s',f')$ is a framed $1$-flow category.
\end{lemma}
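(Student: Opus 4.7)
The lemma requires verifying three properties for $(\cC^\varepsilon_S, s', f')$: the boundary condition for 2-dimensional moduli spaces, the sign condition on interval components in 2-dimensional moduli spaces, and the compatibility condition for the framing. Since the construction of $\cC^\varepsilon_S$ is given by very explicit formulas that only alter moduli spaces involving $x$ or $y$ as either source or target (or when $x, y$ sit at the intermediate grading level), the strategy throughout is direct case analysis tracking how the shifts by $\varepsilon$, $1+\varepsilon$, and $s(B)$ in the definitions of $s'$ and $f'$ interact with the original structure on $\cC$.

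For the boundary condition, the only non-routine case is a pair $a', c'$ with $|a|-|c|=2$ and $|x| = |c|+1$, so that the new stratum $\M(y,c) \times [0,1] \times \M(a,x)$ appears in $\M(a',c')$. The plan is to compute the expected boundary $\coprod_{b'} \M(b',c') \times \M(a',b')$ using the defining formulas, observe that the unmodified summands reproduce $\partial\M(a,c)$, and check that the two ``extra'' copies of $\M(y,c) \times \M(a,x)$ — one appearing via $b'=x'$ (using the $\M(y,c)$ summand of $\M(x',c')$ and $\M(a,x) = \M(a',x')$), the other via $b'=y'$ (using $\M(y,c) = \M(y',c')$ and the $\M(a,x)$ summand of $\M(a',y')$) — are exactly the boundary $\M(y,c)\times\{0,1\}\times\M(a,x)$ of the new stratum. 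When $a = x$ or $c = y$ the verification reduces to taking disjoint unions.

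The sign condition is checked by identifying two types of interval components in $\M(a',c')$: type~(A), intervals already in $\M(a,c)$, and type~(B), the new intervals $\{B\}\times[0,1]\times\{A\}$. For type~(A) the shifts $\varepsilon$ and $1+\varepsilon$ in the definition of $s'$ appear an even number of times (always in matched pairs at the two endpoints of $I$), so the sum $s'(P_1)+s'(Q_1)+s'(P_2)+s'(Q_2)$ collapses to the original sum in $\cC$, which is $1$. For type~(B), one endpoint lies in $\M(x',c')\times\M(a',x')$ with $B$ contributing via the second clause of $s'$ (shift $\varepsilon$) and $A$ via the first (no shift), while the other endpoint lies in $\M(y',c')\times\M(a',y')$ with $B$ via the first clause and $A$ via the third (shift $1+\varepsilon$). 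The two shifts add to $1$ modulo $2$.

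The main obstacle is the framing compatibility condition for pairs $a', d'$ with $|a'|-|d'|=3$. Here one must enumerate all components of $\partial\M(a',d')$ — which is a disjoint union of circles — and verify $\sum_C (1+\tilde{f}'(C)) = 0 \in \Z/2$. The plan is to parametrise the circles in $\partial\M(a',d')$ in terms of circles in $\partial\M(a,d)$ together with their modifications arising from (i) replacing an appearance of $x$ (or $y$) at an intermediate grading, and (ii) the new stratum edges $\{B\}\times[0,1]\times\{A\}$, which are now included among the intervals comprising these circles. Each modification contributes specific terms to $\tilde{f}'(C)$ through the shifts $1+\varepsilon+f(I)$ (for intervals in $\M(a,x)\subset\M(a',y')$) and $1+\varepsilon+s(B)$ (for the new stratum intervals); the formulas are designed precisely so that these added contributions cancel in pairs, leaving the original compatibility sum for $\cC$ which vanishes by hypothesis. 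The bookkeeping is the most delicate step: one must check that every new circle component gathers the correct number of $\varepsilon$ shifts, and that the new stratum intervals — whose framing depends on $s(B)$ — are compensated by the corresponding $\varepsilon$ shifts on the $s'$-values of the vertices $(B,A)$ at their endpoints, matching the structure of $\tilde{f}'$ on unions of intervals.
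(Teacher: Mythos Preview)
Your proposal is correct and follows essentially the same case-analysis approach as the paper. One clarification on the compatibility step: your language about ``modifications'' of old circles is slightly misleading --- in the interesting cases $|a'|=|x'|+1$ and $|a'|=|x'|+2$, the old circle components of $\partial\M(a,d)$ persist unchanged in $\partial\M(a',d')$ with identical $\tilde f'$-values, and the genuinely new components are separate ``square'' circles $C_{J,A}$ built from exactly four intervals (two copies of an interval $J\subset\M(y,d)$ and two new stratum intervals $I_{B,A}$, $I_{\tilde B,A}$), for which the paper computes $\tilde f'(C_{J,A})=1$ directly using the sign-assignment identity $s(C)+s(B)+s(\tilde C)+s(\tilde B)=1$ at the endpoints of $J$; this clean structural picture is what your ``cancel in pairs'' is capturing, and making it explicit would sharpen the argument.
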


\begin{proof}
 It is easy to see that $s'$ is a sign assignment. We need to check the compatibility condition for $\partial\M(a',d')$ for any $a',d'\in \Ob(\cC)$ with $|a'|=|d'|+3$. There are essentially four cases depending on $|a'|-|x'|$.

If $|a'|=|x'|$ we have $\partial\M(a',d')=\partial\M(a,d)$ with the same framings, unless $a'=x'$. In that case
\[
 \partial\M(x',d') = \partial \M(x,d) \sqcup \partial\M(y,d)
\]
and the framing values in the relevant circle components are the same as for $f$.

If $|a'|=|x'|+1$, every interval $J\subset \M(y,d)$ and $A\in \M(a,x)$ leads to two intervals $J \subset \M(y',d')$ and $J'\subset \M(x',d')$ and two points $A\in \M(a',x')$ and $A'\in \M(a',y')$. Let the endpoints of $J$ be given by $(C,B)\in \M(c,d)\times \M(y,c)$ and $(\tilde{C},\tilde{B})\in \M(\tilde{c},d)\times \M(y,\tilde{c})$. We then also get the intervals $I_{B,A}=\{B\}\times [0,1]\times \{A\} \subset \M(a',c')$ and $I_{\tilde{B},A}\subset \M(a',\tilde{c}')$.

We then get a component $C_{J,A}$ in $\partial\M(a',d')$ from the four intervals $J'\times\{A\}$, $J\times \{A'\}$, $\{C\}\times I_{B,A}$, and $\{\tilde{C}\}\times I_{\tilde{B},A}$. Then
\begin{align*}
 \tilde{f}(C_{J,A}) &= f'(J')+f'(J)+1+s'(C)+f'(I_{B,A})+1+s'(\tilde{C})+f'(I_{\tilde{B},A}) \\
 &= f(J)+f(J)+s(C)+s(\tilde{C})+1+\varepsilon+s(B)+1+\varepsilon+s(\tilde{B})\\
 &=1.
\end{align*}
This component therefore contributes nothing to the compatibility condition. Similarly, if $A\in \M(a,x)$ and $S\subset \M(y,d)$ a circle, we get two extra circles $S\times \{A'\}$ and $S'\times \{A\}$ which cancel each other in the compatibility condition. The remaining components of $\partial\M(a',d')$ are present in $\partial\M(a,d)$ with the same contributions.

The case $|a'|=|x'|+2$ is similar to the previous case, we now get new components $C_{C,I}$ made up of four intervals for every $C\in \M(y,d)$ and every interval $I\subset \M(a,x)$. A straightforward check shows $\tilde{f}(C_{C,J})=1$ as in the previous case.

For $|a'|=|x'|+3$ one can give an argument similar to the first case. We omit the details.
\end{proof}

\begin{lemma}
 Let $(\cC,s,f)$ be a framed $1$-flow category and $(\cC^\varepsilon_S,s',f')$ be the framed $1$-flow category obtained by a handle slide. Then there is an isomorphism 
\[
\Phi\colon H^\ast(\cC;\Z/2)\to H^\ast(\cC^\varepsilon_S;\Z/2)
\]
which commutes with $\Sq^2$.
\end{lemma}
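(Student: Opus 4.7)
The plan is two-fold: first construct an isomorphism of mod-$2$ cochain complexes, and then show that this isomorphism commutes with $\Sq^2$.

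\emph{Step 1 (chain isomorphism).} I would define $\Phi_\ast\colon C_\ast(\cC;\Z/2)\to C_\ast(\cC^\varepsilon_S;\Z/2)$ by $\Phi_\ast(a) = a'$ for $a\neq x$ and $\Phi_\ast(x) = x' + y'$. Working modulo $2$, one has $[x':b']=|\M(x,b)|+|\M(y,b)|=[x:b]+[y:b]$ and $[a':y']=|\M(a,x)|+|\M(a,y)|=[a:x]+[a:y]$, from which a direct check shows that $\Phi_\ast$ commutes with the differential. Because $\Phi_\ast$ is upper triangular with respect to the obvious basis, it is invertible over $\Z/2$, and dualising yields the desired isomorphism $\Phi\colon H^\ast(\cC;\Z/2)\to H^\ast(\cC^\varepsilon_S;\Z/2)$.

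\emph{Step 2 ($\Sq^2$ commutation).} Given a cocycle $\varphi\in C^k(\cC;\Z/2)$ represented by $c_1,\dots,c_l$, let $\varphi'\in C^k(\cC^\varepsilon_S;\Z/2)$ be the cocycle obtained by transporting $\varphi$ through the chain isomorphism of Step 1, so that $\varphi$ and $\varphi'$ represent corresponding classes under $\Phi$. Fix a combinatorial boundary matching $\mathcal{C}$ for $\varphi$, and construct a matching $\mathcal{C}'$ for $\varphi'$ that pairs up the points of $\M(x',c_i')=\M(x,c_i)\sqcup\M(y,c_i)$ compatibly with $\mathcal{C}$. I would then show, for each $a\in\Ob(\cC)$ of grading $k+2$, that $\sq^{\varphi}(a)=\sq^{\varphi'}(a')$ modulo a coboundary, by a vertex-by-vertex comparison of the special graph structures $\Gamma_\mathcal{C}(a,\varphi)$ and $\Gamma_{\mathcal{C}'}(a',\varphi')$.

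\emph{Main obstacle.} Passing from $\cC$ to $\cC^\varepsilon_S$ alters the graph structures in three compounding ways: the framings on intervals in $\M(a,x)\subset\M(a',y')$ are shifted by $1+\varepsilon$; each new product interval $\{B\}\times[0,1]\times\{A\}\subset\M(a',c')$ introduces an extra edge with framing $1+\varepsilon+s(B)$; and the duplication of $\M(y,b)$ inside $\M(x',b')$ produces new vertices that must be absorbed into $\mathcal{C}'$. The technical heart of the argument is to apply the local moves (a)--(i) of Subsection \ref{subsec:moves} to reorganise all of these contributions in a single local model, demonstrating that the extra edges, loops and framing shifts cancel pairwise, and that the net effect on $o(\Gamma_{\mathcal{C}'}(a',\varphi'))$ differs from $o(\Gamma_\mathcal{C}(a,\varphi))$ only by the coboundary of a cochain supported on the involved objects.

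A shorter but non-combinatorial alternative is available via Remark \ref{rem:topsquare}: one may extend both framed $1$-flow categories to framed flow categories so that the handle slide lifts to a topological handle slide of framed flow categories, then invoke the invariance of the stable homotopy type under handle slides established in \cite{ALPODS2}; since $\Sq^2$ depends only on the stable homotopy type, the commutation follows.
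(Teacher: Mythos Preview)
Your outline matches the paper's proof in spirit: the chain isomorphism is the same (the paper writes it as $\Phi\colon C_\ast(\cC^\varepsilon_S)\to C_\ast(\cC)$ with $\Phi(x')=x+(-1)^\varepsilon y$, whose dual sends $y\mapsto x'+y'$), and Step 2 is the right strategy of comparing $\Gamma_\mathcal{C}(a,\varphi)$ with $\Gamma_{\mathcal{C}'}(a',\varphi')$ up to local moves and a coboundary.

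Two points where the paper is sharper than your sketch. First, the paper organises the comparison by the value of $|x|-k\in\{0,1,2\}$ rather than treating all three effects simultaneously; the cases $|x|=k+1$ and $|x|=k+2$ are declared easy and omitted, and the substantive work is in $|x|=k$ (where $y$ may lie among the $c_j$). Your ``main obstacle'' paragraph mixes phenomena from different degree cases, which makes it harder to see what actually cancels against what. Second, and more importantly, the paper's mechanism for the hard case $|x|=k$, $\varepsilon=0$ is not a generic application of moves (a)--(i) but a specific \emph{ordering convention} on $\mathcal{C}'$: whenever $B\in\M(b,x)$ is matched to $B'\in\M(b,c_i)$ with $c_i\neq x$ and $s(B)=s(B')$, one insists on the order $(B,B')$; after the slide the sign of $B$ (now in $\M(b',y')$) flips, and this convention forces the count of directed edges to absorb exactly the framing shift $1+\varepsilon$ on intervals of $\M(a,x)$. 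The residual case where $B,B'$ are both in $\M(b,x)$ and matched to each other cannot be fixed by reordering and is what produces the coboundary $\delta(b)$. Your sketch does not yet isolate this device, and without it the phrase ``reorganise all of these contributions in a single local model'' is not quite an argument.

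Your alternative via Remark \ref{rem:topsquare} and \cite{ALPODS2} is valid, but note that the paper's stated aim in this section is precisely to give a self-contained combinatorial proof avoiding that route.
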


\begin{proof}
From the definition we see that
\[
 [a':y'] = [a:y]+(-1)^{\varepsilon+1}[a:x]
\]
and
\[
 [x':c'] = [x:c]+(-1)^{\varepsilon}[y:c]
\]
and all other incidence numbers remain the same. It is now easy to check that we get an isomorphism of chain complexes $\Phi\colon C_\ast(\cC^\varepsilon_S)\to C_\ast(\cC)$ which differs from the identity only in that $\Phi(x')=x+(-1)^\varepsilon y$. The induced map on cochain complexes $\Phi\colon C^\ast(\cC;\Z/2) \to C^\ast(\cC^\varepsilon_S;\Z/2)$ satisfies $\Phi(y)=x'+y'$.

Now let $\varphi\in C^n(\cC;\Z/2)$ be a cocycle with $n=|x|=|y|$, and let $\varphi'=\Phi(\varphi)$ the corresponding cocycle in $C^n(\cC^\varepsilon_S;\Z/2)$. If $y$ is not among the objects representing $\varphi$, it is clear that the Steenrod square cannot change. 
Therefore assume that $y$ is among the objects representing $\varphi$. If $x$ is among the objects representing $\varphi$, then $x'$ is not among the objects representing $\varphi'$ and vice versa. Depending on whether $x$ is among the objects representing $\varphi$, the moduli space $\M(a,x)$ either contributes to $\sq^\varphi$ or $\sq^{\varphi'}$. Furthermore, if $\varepsilon=1$, the relevant framing values remain the same and it is clear that $\Phi$ commutes with Steenrod squares.

If $\varepsilon=0$, then intervals in $\M(a,x)$ have different framing when considered as intervals in $\M(a',y')$. Also, points in $\M(b,x)$ change their sign when considered in $\M(b',y')$. Assume first that $x$ is among the objects representing $\varphi$. Consider a combinatorial boundary matching $\mathcal{C}$ for $\varphi$. We can assume that whenever an element $B\in \M(b,x)$ is matched to an element $B'\in \M(b,c_i)$ with $c_i\not=x$ and $s(B')=s(B)$, the matching is ordered as $(B,B')$. For the combinatorial boundary matching $\mathcal{C}'$ we then keep the matching, although we now consider $B\in \M(b',y')$. 
Note that the sign has changed so the matching is no longer considered ordered. If $s(B')\not=s(B)$, we get $s'(B')=s'(B)$, and we choose the ordering $(B,B')$ in $\mathcal{C}'$. With this ordering convention we can ensure that the number of directed edges compensates for the change of framing of intervals in $\M(a,x)$.

We need to be more careful if $B,B'\in \M(b,x)$ are matched in $\mathcal{C}$. If we keep the identical matching in $\mathcal{C}'$, we cannot compensate for the extra $+1$ from each interval. However, since any $A\in \M(a,b)$ leads to endpoints $(B,A)$ and $(B',A)$, this extra contribution is compensated by the coboundary $\delta(b)$.

The cases where $|x|=n+1$ or $n+2$ are easier and we omit them. 
\end{proof}

\subsection{Combinatorial Whitney trick}
In \cite{JLS2} the Whitney trick in a flow category was described. We now give the analogue for a $1$-flow category.

\begin{definition}\label{def:whitney}
Let $(\cC,s,f)$ be a framed $1$-flow category, and let $x,y\in \Ob(\cC)$ satisfy $|x|=|y|+1$. Assume that $\M(x,y)$ contains two points $P,M$ with $s(P)=0$ and $s(M)=1$. Then let $\cC',s',f')$ be the framed $1$-flow category given as follows:
\begin{itemize}
\item We have $\Ob(\cC')=\Ob(\cC)$, and we write $a'\in \Ob(\cC')$ for $a\in \Ob(\cC)$ with $|a'|=|a|$.
\item The $0$-dimensional moduli spaces are unchanged except for $\M(x',y')=\M(x,y)=\{P,M\}$, furthermore $s'(A)=s(A)$ for every $A$ in a $0$-dimensional moduli space.
\item The $1$-dimensional moduli spaces are unchanged except for $\M(a',y')$ and $\M(x',b')$, which are obtained from $\M(a,y)$ and $\M(x,b)$ respectively, by identifying endpoints of intervals of the form $(P,A)\sim (M,A)$ for $\M(a,y)$, and endpoints $(B,P)\sim (B,M)$ for $\M(x,b)$.
\item Each component $C$ of $\M(a',y')$ is a quotient space of finitely many components $C_1,\ldots,C_k$ of $\M(a,y)$. The new framing value is given by
\[
 f'(C)=\sum_{i=1}^k f(C_i)\in \Z/2.
\]
\item Each component $C$ of $\M(x',b')$ is a quotient space of finitely many components $C_1,\ldots,C_k$ of $\M(x,b)$. The new framing value is given by
\[
 f'(C)=\sum_{i=1}^k f(C_i)+\chi \in \Z/2
\]
where $\chi$ is the number of gluings $(B,P)\sim (B,M)$ with $s(B)=1$.
\end{itemize}
\end{definition}

\begin{remark}
By \cite[Prop.3.4]{JLS2} one should expect that $\chi$ is the number of gluings $(B,P)\sim (B,M)$ with $s(B)=0$, and \cite[Prop.3.3]{JLS2} suggests one should add $1$ for every gluing that is taking place. This is however the result of an unfortunate framing convention used in \cite[\S 3.3]{JLS2}. If we mirror the roles of $P$ and $M$ in \cite[Figure 12]{JLS2} one gets a convention in line with Definition~\ref{def:whitney}. The reader may want to check that this alternative convention also leads to a framing on $\cC'$ below.
\end{remark}

\begin{lemma}
The tuple $(\cC',s',f')$ is a framed $1$-flow category. Furthermore, there is an isomorphism 
\[
\Phi\colon H^\ast(\cC;\Z/2)\to H^\ast(\cC';\Z/2)
\]
which commutes with $\Sq^2$.
\end{lemma}

\begin{proof}
It is clear that $s'$ is a sign assignment, so we need to check the compatibility condition for $\partial \M(a',d')$. 

First consider the case where $|d'|=|y'|$. If $d'\not=y'$, the compatibility condition is clear.
Consider $(P,B,A), (M,B,A)\in \partial \M(a,y)$. Then $(P,B,A)$ is a boundary point of an interval $\{P\}\times I \subset \M(x,y)\times \M(a,x)$ and of an interval $J_1\times \{A\}\subset \M(b,y)\times \M(a,b)$ for some object $b$ with $|b|=|a|-1$. Similarly, $(M,B,A)$ is the boundary point of an interval $\{M\}\times I$ and of an interval $J_2\times \{A\}$.
The interval $I$ has a second endpoint $(B',A')$, leading to two more points $(P,B',A')$ and $(M,B',A')$, which also are endpoints of intervals $J_3\times \{A'\}$ and $J_4\times \{A'\}$, respectively. In $\partial \M(a',y')$ the intervals $\{P,M\}\times I$ are no longer present, and $J_1\times \{A\}$, $J_2\times \{A\}$ have been glued together, as well as $J_3\times \{A'\}$, $J_4\times \{A'\}$ which have also been glued together.

In $\partial\M(a,y)$ the two intervals $\{P,M\}\times I$ are either in the same or in different components. If they are in the same component, then $\partial \M(a',y')$ has one more component than $\partial\M(a,y)$, and if they are in different components, then $\partial\M(a',y')$ has one less component than $\M(a,y)$. But notice that $\{M\}\times I$ contributes an extra summand $1$ to the compatibility condition, which offsets the change in number of components. Therefore the compatibility condition is also satisfied at $\partial \M(a',y')$.

The cases where $|a'|=|x'|+1$ and $a'=x'$ are similar, and will be omitted.

To get the required isomorphism, we can simply use the identity on the cochain level. We also need to check that we get the same Steenrod square $\Sq^2 \colon H^k(\cC';\Z/2)\to H^{k+2}(\cC';\Z/2)$. If we consider the case $k=|y|$, and a cocycle is non-zero $y$, choosing the combinatorial matching to match $P$ and $M$ makes it clear that the same Steenrod square is obtained.

If $k=|y|-1$ we need to be more careful. Let $z\in H^k(\cC';\Z/2)=H^k(\cC;\Z/2)$ be represented by the cocycle $\varphi$. To determine $\sq^\varphi(x)$ we need to look at $\M(x,\varphi)$ which contains various intervals with endpoints $(B,M)$ and $(B,P)$. A combinatorial matching will match $B$ to some $B'$, resulting in further endpoints $(B',M)$ and $(B',P)$. In $\M(a',\varphi')$ the points $(B,M)$ and $(B,P)$ are identified, and so are $(B',M)$ and $(B',P)$. There are now various cases to consider, depending on the signs of $B$ and $B'$, but also on how the components in $\Gamma_\mathcal{C}(x,\varphi)$ and $\Gamma_\mathcal{C}(x',\varphi')$ are different.
We will consider a slightly more general case of this in the proof of Lemma \ref{lem:otherway} from which it will follow that the Steenrod square is indeed the same.
\end{proof}

\subsection{Combinatorial handle cancellation}
In view of \cite{JLS2} the next flow category move to transfer to $1$-flow categories is the handle cancellation. To save space we will only describe the most simple case of handle cancellation, but this case will be sufficient for our purposes later.

\begin{definition}
Let $(\cC,s,f)$ be a framed $1$-flow category, and let $x,y\in \Ob(\cC)$ satisfy $|x|=|y|+1$. Assume that $\M(x,y)$ consists of exactly one point, and that $\M(x,b)=\emptyset$ for all $b\in \Ob(\cC)-\{y\}$ with $|b|=|y|$, and $\M(a,y)=\emptyset$ for all $a\in \Ob(\cC)-\{x\}$ with $|a|=|x|$. Then let $(\cC',s',f')$ be the framed $1$-flow category given as follows:
\begin{itemize}
\item We have $\Ob(\cC')= \Ob(\cC)-\{x,y\}$ with $|\cdot |'\colon \Ob(\cC')\to \Z$ given by restriction of $|\cdot|$.
\item The moduli spaces $\M(a,b)$ of $\cC'$ agree with the moduli spaces of $\cC$.
\item Both $s'$ and $f'$ are the restrictions of $s$ and $f$, respectively.
\end{itemize}
\end{definition}

\begin{lemma}
The tuple $(\cC',s',f')$ is a framed $1$-flow category. Furthermore, there is an isomorphism 
\[
\Phi\colon H^\ast(\cC;\Z/2)\to H^\ast(\cC';\Z/2)
\]
which commutes with $\Sq^2$.
\end{lemma}

\begin{proof}
This is much simpler than the previous cases. We will only consider the compatibility condition for objects $a,b$ with $|a|=|x|+1$ and $|b|=|y|-1$.
Because $\cC$ is a framed $1$-flow category, we get $[a:x]=0$ and $[y:b]=0$ for all such $a,b\in \Ob(\cC)$. Furthermore, any intervals $I$ in $\M(a,y)$ and $J$ in $\M(x,b)$ will have both endpoints going through $x$, respectively $y$. Each such pair of intervals $(I,J)$ gives rise to an square $C$ in $\partial \M(a,b)$ with edges given by the intervals $\{+\}\times I$, $\{-\}\times I$, $J\times \{+\}$ and $J\times \{-\}$. Clearly $\tilde{f}(C)=1$, so $C$ does not contribute to the compatibility condition.

Note that $\partial \M(a,b)$ in $\cC'$ is obtained from $\partial \M(a,b)$ in $\cC$ by removing these squares. Since $\cC$ satisfies the compatibility condition, so does $\cC'$ now.
\end{proof}

\section{Computational aspects: the Smith normal form}
\label{sec:snf_for_flow_categories}

In Section \ref{sec:snf_for_flow_categories}, we finally describe the new algorithm for computing Steenrod squares in a framed flow category. We begin with the definition of a particularly simple type of 1-flow category, which the algorithm achieves upon termination.

\begin{definition}
 A framed $1$-flow category $\cC$ is said to be in \em primary Smith normal form\em, if the following are satisfied.
\begin{itemize}
 \item If $b$ is an object, there is at most one non-empty $0$-dimensional moduli space $\M(a,b)$, which contains $p^k$ elements, all of which have the same framing, where $p\geq 2$ is a prime and $k\geq 1$.
 \item For each object $a$ there exists at most one object $b$ with $|a|=|b|+1$ and $\M(a,b)$ non-empty.
\end{itemize}

\end{definition}

The idea of the algorithm is as follows: \begin{enumerate}
\item \label{item:idea1}restrict from a framed flow category to a framed 1-flow category, 
\item \label{item:idea2}use flow category moves to algorithmically reduce the 1-flow category to primary Smith normal form, 
\item \label{item:idea3}keep track of the relevant combinatorial data along the way so that the combinatorial Steenrod square can be used.
\end{enumerate}

Primary Smith normal form for flow categories was considered in \cite[\textsection 6]{ALPODS2}, where it was shown that flow category moves can be used to move any flow category into this simple form, so we already know that (\ref{item:idea2}) is always possible. As this section is computational in nature, the subtlety of our task is to come up with an algorithm to achieve (\ref{item:idea2}), which interacts with (\ref{item:idea3}) in a computationally efficient manner. For this, several tricks are used for reducing the amount of combinatorial data we have to carry with us.

\subsection{Ideas to speed up the process}

The Khovanov cohomology of a link has relatively few generators compared to the number of generators of the Khovanov cochain complex. Turning the Khovanov flow category of \cite{LipSarKhov} into a Smith normal form can be a time-consuming process. Furthermore, trying to keep track of the framed 1-dimensional moduli spaces is a difficult task as the numbers explode quickly. For example, for the diagram of the torus knot $T_{7,4}$ used to make Steenrod square calculations in \cite{JLS}, the number of points in the $0$-dimensional moduli spaces can exceed $10^{10000}$, while the largest torsion coefficient is $4$.

Large numbers do not have to represent a problem for a computer program, but points in the $0$-dimensional moduli spaces combine to endpoints of intervals in $1$-dimensional which can be combined in more diverse ways. These combinations may not behave well after applying handle slides.

We illustrate the key ideas for getting around this computational issue with an example.

\begin{example}
Assume a framed $1$-flow category $\cC$ contains objects $a,b_1,\ldots,b_5,c$ as in Figure \ref{fig:flowcat}. If we assume that every $0$-dimensional moduli space only contains points of the same sign, then there are 95 intervals in $\M(a,c)$. But we also need to know how their endpoints are paired. For example, of the 50 endpoints going through $b_1$, 20 may be paired with endpoints going through $b_3$, 17 with endpoints going through $b_4$ and the remaining 13 going through $b_5$.

Performing a handle slide where we slide $b_3$ three times over $b_1$ with subsequent Whitney trick requires us to keep track of how these numbers change.

On the other hand, if $c$ is part of a cocycle $\varphi$ whose Steenrod square we want to compute, we have to choose a combinatorial boundary matching $\mathcal{C}$ for $\varphi$, which will involve the objects $b_1,\ldots,b_5$. Clearly we can match the same four points in $\M(b_1,c)$ in the same way for every cocycle $\varphi$ involving $c$, while the fifth point needs to be matched with another point in a $\M(b_1,c')$, where $c'$ depends on $\varphi$.
Similarly in $\M(b_3,c)$ and $\M(b_4,c)$ we can even match all the points in the same way for every such $\varphi$. So when we do the actual Steenrod square calculation we add 75 edges to $\Gamma(a,c)$ in a way that will work for any $\varphi$ involving $c$. Let us call this new special graph structure $\tilde{\Gamma}(a,c)$, so that we have
\[
 \Gamma(a,c) \subset \tilde{\Gamma}(a,c) \subset \Gamma_\mathcal{C}(a,\varphi).
\]
Furthermore, several of the components $C$ in $\Gamma_\mathcal{C}(a,\varphi)$ may only involve edges already in $\tilde{\Gamma}(a,c)$. So their contribution to $\sq^\varphi(a)$ can be pre-computed and stored with $\M(a,c)$.

Notice that there are still 40 endpoints in $\tilde{\Gamma}(a,c)$, ten of which are of the form $(A_i,B_5)$ with $A_i\in \M(a,b_1)$ and $B_5\in \M(b_1,c)$. We can also choose a matching for the ten points in $\M(a,b_1)$ (and which would be relevant for calculating the Steenrod square of a cocycle involving $b_1$) and add five edges to $\tilde{\Gamma}(a,c)$ leading to a new special graph structure $\bar{\Gamma}(a,c)$. 
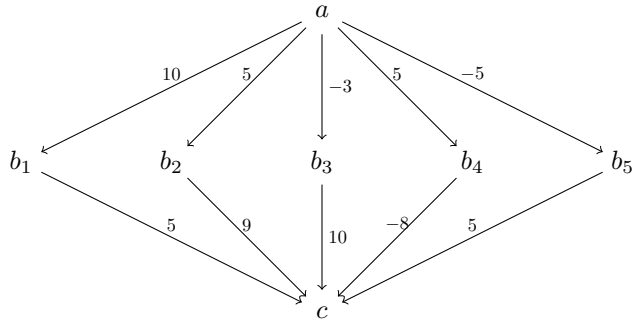
\begin{figure}[ht]
\begin{tikzpicture}
\node at (0,6) {$a$};
\node at (-4,4) {$b_1$};
\node at (-2,4) {$b_2$};
\node at (0,4) {$b_3$};
\node at (2,4) {$b_4$};
\node at (4,4) {$b_5$};
\node at (0,2) {$c$};

\draw [shorten >=0.3cm,shorten <=0.3cm,->] (0,6) -- node [above,scale=0.7] {$10$} (-4,4);
\draw [shorten >=0.3cm,shorten <=0.3cm,->] (0,6) -- node [above,scale=0.7] {$5$} (-2,4);
\draw [shorten >=0.3cm,shorten <=0.3cm,->] (0,6) -- node [right,scale=0.7] {$-3$} (0,4);
\draw [shorten >=0.3cm,shorten <=0.3cm,->] (0,6) -- node [above,scale=0.7] {$5$} (2,4);
\draw [shorten >=0.3cm,shorten <=0.3cm,->] (0,6) -- node [above,scale=0.7] {$-5$} (4,4);
\draw [shorten >=0.3cm,shorten <=0.3cm,->] (-4,4) -- node [above,scale=0.7] {$5$} (0,2);
\draw [shorten >=0.3cm,shorten <=0.3cm,->] (-2,4) -- node [above,scale=0.7] {$9$} (0,2);
\draw [shorten >=0.3cm,shorten <=0.3cm,->] (0,4) -- node [right,scale=0.7] {$10$} (0,2);
\draw [shorten >=0.3cm,shorten <=0.3cm,->] (2,4) -- node [above,scale=0.7] {$-8$} (0,2);
\draw [shorten >=0.3cm,shorten <=0.3cm,->] (4,4) -- node [above,scale=0.7] {$5$} (0,2);
\end{tikzpicture}
\caption{The $0$-dimensional moduli spaces between $a$ and $c$.\label{fig:flowcat}}
\end{figure}
After doing this for the objects $b_2$ and $b_5$ (we do not need to deal with $b_3$ and $b_4$ as there are no further endpoints left involving them), the resulting special graph structure $\bar{\Gamma}(a,c)$ has only two endpoints left, one going through $b_2$ and one going through $b_5$.

We would like to calculate the Steenrod square of $\varphi$ using the special graph structures $\bar{\Gamma}(a,c)$ by matching the remaining endpoints for varying $c$. An obvious problem is that $\bar{\Gamma}(a,c) \not\subset \Gamma_\mathcal{C}(a,c)$. This leads indeed to a difficulty, which requires us to define the \em distortion \em below. But it turns out that we can recover the Steenrod square from the $\bar{\Gamma}(a,c)$ directly, which will require much less information than the moduli spaces $\M(a,c)$.
\end{example}

\subsection{Partial combinatorial matchings}\label{subsec:partial}

For a framed 1-flow category $\cC$ with a combinatorial matching $\mathcal{C}$ for a cocycle $\varphi\in C^k(\cC;\Z/2)$ represented by the objects $c_1,\ldots, c_l$, we have the special graph structure $\Gamma_\mathcal{C}(a,\varphi)$ with assigned value $o(\Gamma_\mathcal{C}(a,\varphi))\in \Z/2$ from which we can derive the second Steenrod square. For any object $c$ (not necessarily the dual of a cocycle) we have $\Gamma(a,c)$ is defined for any object $a$ with $|a|=|c'|+2$. We would like to generalize $\Gamma_\mathcal{C}(a,c)$ to this non-cocycle setting, taking advantage of a global combinatorial matching.
Note that if for some $j$ we have that $\M(b_i,c_j)$ has an odd number of elements, there is one element that needs to be matched with an element of some $\M(b_i,c_{j'})$ where $j'\not=j$. We therefore cannot do a global combinatorial matching which would work for all possible Steenrod squares, as different cocycles require different generators from the flow category. But if $\M(b_i,c_j)$ has an even number of elements, we can match them all within, and use this matching whenever $c_j$ is part of the dual of a cocycle.

\begin{definition}
Let $\cC$ be a framed $1$-flow category. A \em partial combinatorial matching \em $\mathcal{P}$ is a collection of pairs $(A_1,A_2)$ where $A_1,A_2\in \M(a,b)$ are different points where $|a|=|b|+1$, and no point is paired up more than once.
\end{definition}

Notice that pairs are ordered, so if $A_1$ and $A_2$ have the same sign, we have a direction from $A_1$ to $A_2$ once we use this in a combinatorial matching.

The set of partial combinatorial matchings can be partially ordered using inclusion, and a maximal partial combinatorial matching has the property that if $\M(a,b)$ has an even number of points, all of them are paired up, and if there is an odd number of points in $\M(a,b)$, there is exactly one point which is not paired.

\begin{definition}
Let $\cC$ be a framed flow category and $\mathcal{P}$ a partial combinatorial matching. If $a,c\in \Ob(\cC)$ are objects with $|a|=|c|+2$, define a special graph structure $\Gamma_\mathcal{P}(a,c)$ as follows. The vertex set $V$ and $s\colon V \to \{0,1\}$ are the same as for $\Gamma(a,c)$, and the same holds for $E'$ and $f\colon E'\to \{0,1\}$. If $(B_1,A),(B_2,A) \in \M(b,c)\times \M(a,b)$, where $(B_1,B_2)\in \mathcal{P}$ we add an edge to $E-E'$ between the vertices $(B_1,A),(B_2,A)$ which is directed from $(B_1,A)$ to $(B_2,A)$ if $s(B_1)=s(B_2)$ and undirected otherwise.

If $(B,A_1),(B,A_2) \in \M(b,c)\times \M(a,b)$, where $(A_1,A_2)\in \mathcal{P}$ and $B$ is not matched to any other point $B'$ in $\mathcal{P}$, we add an edge to $E-E'$ between these vertices. It is undirected if $s(A_1)\not=s(A_2)$. If $s(A_1)=s(A_2)$ the edge is directed, pointing from $(B,A_1)$ to $(B,A_2)$. Furthermore, for every new edge between $(B,A_1)$ to $(B,A_2)$ (directed or not) we add a loop to $L$ if $s(B)=1\in \Z/2$.
\end{definition}

\begin{example}
Consider the $1$-flow category $\cC$ indicated in Figure \ref{fig:1}. The superscript in the elements of the $0$-dimensional moduli spaces indicate their sign.

\begin{figure}[ht]
\begin{tikzpicture}
\node at (0,6) {$a$};
\node at (-3,4) {$b_1$};
\node at (-1,4) {$b_2$};
\node at (1,4) {$b_3$};
\node at (3,4) {$b_4$};
\node at (0,2) {$c$};

\draw [shorten >=0.3cm,shorten <=0.3cm,->] (0,6) -- node [above,sloped,scale=0.7] {$B_1^+$} (-3,4);
\draw [shorten >=0.3cm,shorten <=0.3cm,->] (0,6) -- node [above,sloped,scale=0.7] {$B_2^-$} (-1,4);
\draw [shorten >=0.3cm,shorten <=0.3cm,->] (0,6) -- node [above,sloped,scale=0.7] {$B_3^-$} (1,4);
\draw [shorten >=0.3cm,shorten <=0.3cm,->] (0,6) -- node [above,sloped,scale=0.7] {$B_4^-$} (3,4);
\draw [shorten >=0.3cm,shorten <=0.3cm,->] (-3,4) -- node [above,sloped,scale=0.7] {$C_1^+$} (0,2);
\draw [shorten >=0.3cm,shorten <=0.3cm,->] (-1,4) -- node [above,sloped,scale=0.7] {$C_2^+,C_3^+$} (0,2);
\draw [shorten >=0.3cm,shorten <=0.3cm,->] (1,4) -- node [above,sloped,scale=0.7] {$C_2^-,C_3^-$} (0,2);
\draw [shorten >=0.3cm,shorten <=0.3cm,->] (3,4) -- node [above,sloped,scale=0.7] {$C_4^+$} (0,2);
\end{tikzpicture}
\caption{The flow category $\cC$.\label{fig:1}}
\end{figure}
The moduli space $\M(a,c)$ consists of three intervals, which we can choose to connect the points $(C_1^+,B_1^+)$ and $(C_4^+,B_4^-)$, $(C_2^+,B_2^-)$ and $(C_2^-,B_3^-)$, $(C_3^+,B_2^-)$ and $(C_3^-,B_3^-)$.

The partial matching pairs $(C_2^+,C_3^+)$ and $(C_2^-,C_3^-)$.  The special graph structure $\Gamma_\mathcal{P}(a,c)$ contains one interval with endpoints $(C_1^+,B_1^+)$ and $(C_4^+,B_4^-)$, and a circle consisting of four edges, two of which are oriented. Note that these two oriented edges point in opposite directions on the circle.

If we change the moduli space $\M(a,c)$ by choosing to connect the points $(C_1^+,B_1^+)$ and $(C_2^+,B_2^-)$, $(C_4^+,B_4^-)$ and $(C_2^-,B_3^-)$, $(C_3^+,B_2^-)$ and $(C_3^-,B_3^-)$, the graph will consist of one interval made up of five edges, two of which are oriented and again in opposite direction.

If we turn the $1$-flow category upside down, we get two extra loops in $\Gamma_\mathcal{P}(c^\ast,a^\ast)$ coming from the fact that $B_2^-$ and $B_3^-$ are negatively framed (one loop from the edge between $(B_2^-,C_2^+)$ and $(B_2^-,C_3^+)$, and one from the edge between $(B_3^-,C_2^-)$ and $(B_3^-,C_3^-)$).

Note however that in this example $c$ does not represent a cocycle, and $a$ represents a coboundary, so this flow category does not contain interesting Steenrod squares.
\end{example}

\subsection{Steenrod squares using partial combinatorial matchings}
\label{subsec:steen_sq_and_partial_comb_matchings}

\begin{definition}For $\Gamma$ a special graph structure and $C$ a component of $\Gamma$ which does not contain any vertices of $\partial V$ define $o(C)\in\Z/2$ as follows. If $C$ is in $L$ then $o(C)=1$. If $C$ contains a vertex then $o(C):=1+F(C)+D(C)$.
\end{definition}

With this new notation, if for some $\Gamma$ we have $\partial V=\emptyset$ then we may write $o(\Gamma)=\sum_C o(C)$ where the sum is over all components of $\Gamma$.

Now consider again an arbitrary framed $1$-flow category $\cC$.
If a cocycle $\varphi$ is represented by $c_1,\ldots, c_l$ we want to obtain values $\sq^\varphi(a)\in \Z/2$ for objects $a\in \Ob(\cC)$ using $\Gamma_\mathcal{P}(a,c_j)$ giving rise to the Steenrod square. Observe that the special graph structure $\Gamma_{\mathcal{P}}(a,c_j)$ has as boundary points the pairs $(B,A)$ with neither $B$ nor $A$ matched by $\mathcal{P}$.

Denote by $\Gamma_{\mathcal{P}}(a,\varphi)$ the special graph structure given by the disjoint union of the $\Gamma_\mathcal{P}(a,c_j)$. If we extend $\mathcal{P}$ to a combinatorial matching $\mathcal{C}$ for $\varphi$ by pairing those $B\in \mathcal{M}(b_i,c_j)$ which are not paired in $\mathcal{P}$, we can extend $\Gamma_{\mathcal{P}}(a,\varphi)$ to a special graph structure $\overline{\Gamma}_\mathcal{C}(a,\varphi)$ by adding edges between vertices $(B,A)$ and $(B',A)$, where $(B,B')$ are matched in $\mathcal{C}$, but $B,B'$ and $A$ are not matched in $\mathcal{P}$.

Notice that $\overline{\Gamma}_\mathcal{C}(a,\varphi)$ differs from $\Gamma_\mathcal{C}(a,\varphi)$ in that there are edges in the former between vertices $(B,A)$ and $(B,A')$ if $B$ is not matched in $\mathcal{P}$, but $A$ and $A'$ are. Nevertheless, $\overline{\Gamma}_\mathcal{C}(a,\varphi)$ has no boundary points, so we can assign each component $C$ a value $o(C)$. Adding these values does not give the right value for the Steenrod square, so we need to make a further definition first.

\begin{definition}\label{def:specgraphstructure}
Given a cocycle $\varphi$, with respect to $\Z/2$-coefficients, represented by objects $c_1,\ldots,c_l$, and an object $a\in \Ob(\cC)$ with $|a|=|c_j|+2$, let $b_1,\ldots,b_m\in \Ob(\cC)$ be the objects with $\M(b_i,c_j)\times \M(a,b_i)$ non-empty for at least one $c_j$, and let $\mathcal{P}$ be a maximal partial combinatorial matching.

Let $u_i$ be the number of moduli spaces $\M(b_i,c_j)$ for $j=1,\ldots,l$ which have an odd number of elements. Note that $u_i$ is even as $\varphi$ is a cocycle. 

Also, let $v_i$ be the number of pairings in $\mathcal{P}$ involving elements of $\M(a,b_i)$ between points of the same sign. 

Define the \em distortion of $\mathcal{P}$ \em by \[d_\mathcal{P}(a,\varphi)=\sum_i (u_i/2)v_i \in \Z/2.\]

Furthermore, define
\[
 \widetilde{\sq^\varphi}(a) = \sum_C o(C) + d_\mathcal{P}(a,\varphi) \in \Z/2
\]
where the sum is taken over the components of $\overline{\Gamma}_\mathcal{C}(a,\varphi)$ for some extension $\mathcal{C}$ of $\mathcal{P}$.
\end{definition}

\begin{lemma}\label{lem:otherway}
Let $\varphi\in C^k(\cC;\Z/2)$ be a cocycle and $a\in \Ob(\cC)$ with $|a|=k+2$. Let $\mathcal{P}$ be a maximal partial combinatorial matching and $\mathcal{C}$ an extension of $\mathcal{P}$ to a combinatorial matching for $\varphi$. Then $\sq^\varphi(a)=\widetilde{\sq^\varphi}(a)\in \Z/2$.
\end{lemma}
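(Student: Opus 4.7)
The equivalence $\sq^\varphi(a) = \widetilde{\sq^\varphi}(a)$ amounts to showing $o(\Gamma_\mathcal{C}(a,\varphi)) = o(\overline{\Gamma}_\mathcal{C}(a,\varphi)) + d_\mathcal{P}(a,\varphi)$; note that since $\overline{\Gamma}_\mathcal{C}(a,\varphi)$ has empty boundary, the sum $\sum_C o(C)$ appearing in the definition of $\widetilde{\sq^\varphi}(a)$ is precisely $o(\overline{\Gamma}_\mathcal{C}(a,\varphi))$. The plan is to compare the two special graph structures edge-by-edge, transform $\Gamma_\mathcal{C}$ into $\overline{\Gamma}_\mathcal{C}$ by a sequence of local modifications, and track the net change in $o$.

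First I identify where the two graphs differ. They share the vertex set, all edges of $E'$ (arising from intervals in the $\M(a,c_j)$), and all edges of $E-E'$ produced by $\mathcal{P}$-pairs of $B$-points, which contribute horizontal edges joining $(B,A)$ to $(B',A)$ for every $A$ in both graphs. The discrepancies occur exactly at \emph{swap sites}: data $(b_i, (A_1,A_2), (B,B'))$ with $(A_1,A_2)\in \mathcal{P}$ a pair in $\M(a,b_i)$ and $(B,B')\in \mathcal{C}\setminus \mathcal{P}$ a pair in $\M(b_i,\varphi)$. At such a site $\Gamma_\mathcal{C}$ contains two horizontal edges joining $(B,A_j)$ to $(B',A_j)$ for $j=1,2$, while $\overline{\Gamma}_\mathcal{C}$ contains two vertical edges joining $(B,A_1)$ to $(B,A_2)$ and $(B',A_1)$ to $(B',A_2)$, together with loops in $L$ for any of $B$, $B'$ whose sign is $1$.

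Next, I would perform these swaps one at a time, analyzing each by the signs of $B, B', A_1, A_2$. When $s(B)=s(B')$ and $s(A_1)\ne s(A_2)$ the swap is exactly move (a), contributing $0$ to $\Delta o$. When $s(B)=s(B')$ and $s(A_1)=s(A_2)$ the swap reduces to move (b) together with a direction-flip of one vertical directed edge, contributing $+1$. When $s(B)\ne s(B')$ (so that exactly one loop is added in $\overline{\Gamma}_\mathcal{C}$) and $s(A_1)\ne s(A_2)$, a direct cycle-count on the affected component shows the added loop is balanced by a parity change in $\pi_0$ or $D(C)$, giving net contribution $0$; while the analogous balancing when $s(A_1)=s(A_2)$ yields net contribution $+1$. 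Summing the $+1$ contributions: at each $b_i$ there are $u_i/2$ pairs $(B,B')\in \mathcal{C}\setminus \mathcal{P}$ in $\M(b_i,\varphi)$ and $v_i$ same-sign pairs $(A_1,A_2)\in \mathcal{P}$ in $\M(a,b_i)$, giving a contribution of $(u_i/2)\cdot v_i$; summing over $i$ produces exactly $d_\mathcal{P}(a,\varphi)$.

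The main obstacle will be the detailed case analysis for the swaps with $s(B)\ne s(B')$, which do not correspond to a single named move (a)--(i) in the $\sim$-relation. Here the invariance (or $+1$ shift) of $o$ results from an interplay between the loop added in $\overline{\Gamma}_\mathcal{C}$, a possible parity change in $\pi_0$ when the cycle containing the four swap-site vertices is rearranged, and a possible change in $D(C)$ when cycle-orientation is reversed along part of that cycle — this last controlled by the lemma on well-definedness of $D(C)$. A secondary technical issue is that different swap sites may share a vertex; this is not a problem because each swap modifies only the two $E-E'$-edges at its four vertices, so the swaps commute and can be performed in any order.
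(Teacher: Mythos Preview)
Your proposal is correct and follows essentially the same route as the paper: identify the swap sites where $\overline{\Gamma}_\mathcal{C}$ and $\Gamma_\mathcal{C}$ differ, split into four sign cases, show each swap changes $o$ by $+1$ precisely when $s(A_1)=s(A_2)$, and sum to recover $d_\mathcal{P}(a,\varphi)$. The paper carries out all four cases by the same direct method you flag as the ``main obstacle'': it names the four external neighbours $X,Y,Z,W$ of the swap-site vertices, runs through the three ways they can be connected in the ambient cycle, and checks the parity of $\pi_0$, $D(C)$, and the extra loop in each subcase. Your shortcut of recognising the $s(B)=s(B')$ cases as instances of moves (a) and (b) (plus a move (d) flip) is a mild streamlining the paper does not use; it is valid, but note you must also observe that the extra $L$-loops attached to the vertical edges in $\overline{\Gamma}_\mathcal{C}$ come in pairs (zero or two) when $s(B)=s(B')$ and hence cancel. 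For the $s(B)\neq s(B')$ cases your sketch is accurate but you will indeed need the full $X,Y,Z,W$ subcase analysis the paper performs, since no single named move covers them.
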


\begin{proof}
Let $B\in \mathcal{M}(b_i,c_j)$ and $B'\in \mathcal{M}(b_i,c_k)$ be matched in $\mathcal{C}$, but not in $\mathcal{P}$. Also, let $A,A'\in \mathcal{M}(a,b_i)$ be matched in $\mathcal{P}$. The special graph structures $\overline{\Gamma}_\mathcal{C}(a,c)$ and $\Gamma_\mathcal{C}(a,\varphi)$ differ in that in the former there is an edge between $(B,A)$ and $(B,A')$, and one between $(B',A)$ and $(B',A')$, while in the latter there is an edge between $(B,A)$ and $(B',A)$, and one between $(B,A')$ and $(B',A')$. Also, if $s(B)=1$, there will be an extra loop in $L$ coming with every edge $(B,A)$ to $(B,A')$.

We need to see how the sum over the components $\sum_C o(C)$ changes when we change these edges. We have to check four cases, depending on whether $s(B)=s(B')$ and $s(A)=s(A')$ or not. Let us begin with the case $s(A)\not=s(A')$ and $s(B)\not= s(B')$. In this case all four relevant edges are non-directed. We basically have the following picture
\begin{center}
\begin{tikzpicture}
\node[circle,draw, inner sep=0pt, minimum size=3pt] at (1,0) {$ $};
\node[circle,draw, inner sep=0pt, minimum size=3pt] at (2,0) {$ $};
\node[circle,draw, inner sep=0pt, minimum size=3pt] at (3,0) {$ $};
\node[circle,draw, inner sep=0pt, minimum size=3pt] at (4,0) {$ $};

\node[circle,draw, inner sep=0pt, minimum size=3pt] at (1,1) {$ $};
\node[circle,draw, inner sep=0pt, minimum size=3pt] at (2,1) {$ $};
\node[circle,draw, inner sep=0pt, minimum size=3pt] at (3,1) {$ $};
\node[circle,draw, inner sep=0pt, minimum size=3pt] at (4,1) {$ $};

\draw[-] (3.05,0) -- (3.95,0) node [above,pos=0.5] {$0$};
\draw[-] (1.05,0) -- (1.95,0) node [above,pos=0.5] {$0$};
\draw[-] (2.05,0) -- (2.95,0);
\draw[-] (3.05,1) -- (3.95,1) node [above,pos=0.5] {$0$};
\draw[-] (1.05,1) -- (1.95,1) node [above,pos=0.5] {$0$};
\draw[-] (2.05,1) -- (2.95,1);

\node [scale=0.7] at (2,-0.3) {$(B',A)$};
\node [scale=0.7] at (3,-0.3) {$(B',A')$};
\node [scale=0.7] at (2,1.2) {$(B,A)$};
\node [scale=0.7] at (3,1.2) {$(B,A')$};
\node [scale=0.7] at (0.7,0) {$Z$};
\node [scale=0.7] at (4.3,0) {$W$};
\node [scale=0.7] at (0.7,1) {$X$};
\node [scale=0.7] at (4.3,1) {$Y$};

\node[circle,draw, inner sep=0pt, minimum size=3pt] at (6,0) {$ $};
\node[circle,draw, inner sep=0pt, minimum size=3pt] at (7,0) {$ $};
\node[circle,draw, inner sep=0pt, minimum size=3pt] at (8,0) {$ $};
\node[circle,draw, inner sep=0pt, minimum size=3pt] at (9,0) {$ $};

\node[circle,draw, inner sep=0pt, minimum size=3pt] at (6,1) {$ $};
\node[circle,draw, inner sep=0pt, minimum size=3pt] at (7,1) {$ $};
\node[circle,draw, inner sep=0pt, minimum size=3pt] at (8,1) {$ $};
\node[circle,draw, inner sep=0pt, minimum size=3pt] at (9,1) {$ $};

\draw[-] (8.05,0) -- (8.95,0) node [above,pos=0.5] {$0$};
\draw[-] (6.05,0) -- (6.95,0) node [above,pos=0.5] {$0$};
\draw[-] (7,0.05) -- (7,0.95);
\draw[-] (8.05,1) -- (8.95,1) node [above,pos=0.5] {$0$};
\draw[-] (6.05,1) -- (6.95,1) node [above,pos=0.5] {$0$};
\draw[-] (8,0.05) -- (8,0.95);

\node [scale=0.7] at (7,-0.3) {$(B',A)$};
\node [scale=0.7] at (8,-0.3) {$(B',A')$};
\node [scale=0.7] at (7,1.2) {$(B,A)$};
\node [scale=0.7] at (8,1.2) {$(B,A')$};
\node [scale=0.7] at (5.7,0) {$Z$};
\node [scale=0.7] at (9.3,0) {$W$};
\node [scale=0.7] at (5.7,1) {$X$};
\node [scale=0.7] at (9.3,1) {$Y$};
\end{tikzpicture}
\end{center}
with the left-hand side a subgraph of $\overline{\Gamma}_\mathcal{C}(a,\varphi)$ and the right-hand side a subgraph of $\Gamma_\mathcal{C}(a,\varphi)$. Note that we can assume all marked edges to be marked by $0$ because of the local move (c).

In $\overline{\Gamma}_\mathcal{C}(a,\varphi)$ the endpoints $X,Y,Z$ and $W$ are connected somehow, and there are three ways of doing this. First, $X$ and $Y$ connect, and $Z$ and $W$ connect. This leads to the picture
\begin{center}
\begin{tikzpicture}
\node[circle,draw, inner sep=0pt, minimum size=3pt] at (1,0) {$ $};
\node[circle,draw, inner sep=0pt, minimum size=3pt] at (2,0) {$ $};
\node[circle,draw, inner sep=0pt, minimum size=3pt] at (3,0) {$ $};
\node[circle,draw, inner sep=0pt, minimum size=3pt] at (4,0) {$ $};

\node[circle,draw, inner sep=0pt, minimum size=3pt] at (1,1) {$ $};
\node[circle,draw, inner sep=0pt, minimum size=3pt] at (2,1) {$ $};
\node[circle,draw, inner sep=0pt, minimum size=3pt] at (3,1) {$ $};
\node[circle,draw, inner sep=0pt, minimum size=3pt] at (4,1) {$ $};

\draw[-] (3.05,0) -- (3.95,0);
\draw[-] (1.05,0) -- (1.95,0);
\draw[-] (2.05,0) -- (2.95,0);
\draw[-] (3.05,1) -- (3.95,1);
\draw[-] (1.05,1) -- (1.95,1);
\draw[-] (2.05,1) -- (2.95,1);

\node[circle,draw, inner sep=0pt, minimum size=3pt] at (6,0) {$ $};
\node[circle,draw, inner sep=0pt, minimum size=3pt] at (7,0) {$ $};
\node[circle,draw, inner sep=0pt, minimum size=3pt] at (8,0) {$ $};
\node[circle,draw, inner sep=0pt, minimum size=3pt] at (9,0) {$ $};

\node[circle,draw, inner sep=0pt, minimum size=3pt] at (6,1) {$ $};
\node[circle,draw, inner sep=0pt, minimum size=3pt] at (7,1) {$ $};
\node[circle,draw, inner sep=0pt, minimum size=3pt] at (8,1) {$ $};
\node[circle,draw, inner sep=0pt, minimum size=3pt] at (9,1) {$ $};

\draw[-] (8.05,0) -- (8.95,0);
\draw[-] (6.05,0) -- (6.95,0);
\draw[-] (7,0.05) -- (7,0.95);
\draw[-] (8.05,1) -- (8.95,1);
\draw[-] (6.05,1) -- (6.95,1);
\draw[-] (8,0.05) -- (8,0.95);

\draw[-, bend right] (1.05,0) to (3.95,0);
\draw[-, bend left] (1.05,1) to (3.95,1);
\draw[-, bend right] (6.05,0) to (8.95,0);
\draw[-, bend left] (6.05,1) to (8.95,1);
\end{tikzpicture}
\end{center}
If we assume that there are $a$ directed edges on the outside path from $Y$ to $X$ pointing in a fixed direction on the component, and $b$ directed edges on the outside path from $Z$ to $W$ pointing in a given direction, the contribution to the sum $o(C)$ over the components from these two components is $2+1+a+b$, where $2$ comes from the two components, the $1$ comes from an extra loop in $L$, since $s(B)\not=s(B')$ means that one of those signs is $1$, and so one of the edges comes with an extra loop. The summands $a$ and $b$ come from $D(C)$.

On the right we only get one component $C$ and there will be no extra loop, so the contribution is $1+D(C)$. But it is easy to see that $D(C)=a+b$, so both pictures contribute the same to $\sum_Co(C)\in \Z/2$.

The second case connects $X$ with $Z$ and $Y$ with $W$, which is basically the same case as the previous one.

In the third case we connect $X$ with $W$ and $Y$ with $Z$. We now get one component in the graph in both cases, but if we choose the orientation to be along the path from $X$ to $W$, the orientation on the path from $Y$ to $Z$ changes.
\begin{center}
\begin{tikzpicture}
\node[circle,draw, inner sep=0pt, minimum size=3pt] at (1,0) {$ $};
\node[circle,draw, inner sep=0pt, minimum size=3pt] at (2,0) {$ $};
\node[circle,draw, inner sep=0pt, minimum size=3pt] at (3,0) {$ $};
\node[circle,draw, inner sep=0pt, minimum size=3pt] at (4,0) {$ $};

\node[circle,draw, inner sep=0pt, minimum size=3pt] at (1,1) {$ $};
\node[circle,draw, inner sep=0pt, minimum size=3pt] at (2,1) {$ $};
\node[circle,draw, inner sep=0pt, minimum size=3pt] at (3,1) {$ $};
\node[circle,draw, inner sep=0pt, minimum size=3pt] at (4,1) {$ $};

\draw[-] (3.05,0) -- (3.95,0);
\draw[-] (1.05,0) -- (1.95,0);
\draw[-] (2.05,0) -- (2.95,0);
\draw[-] (3.05,1) -- (3.95,1);
\draw[-] (1.05,1) -- (1.95,1);
\draw[-] (2.05,1) -- (2.95,1);

\node[circle,draw, inner sep=0pt, minimum size=3pt] at (6,0) {$ $};
\node[circle,draw, inner sep=0pt, minimum size=3pt] at (7,0) {$ $};
\node[circle,draw, inner sep=0pt, minimum size=3pt] at (8,0) {$ $};
\node[circle,draw, inner sep=0pt, minimum size=3pt] at (9,0) {$ $};

\node[circle,draw, inner sep=0pt, minimum size=3pt] at (6,1) {$ $};
\node[circle,draw, inner sep=0pt, minimum size=3pt] at (7,1) {$ $};
\node[circle,draw, inner sep=0pt, minimum size=3pt] at (8,1) {$ $};
\node[circle,draw, inner sep=0pt, minimum size=3pt] at (9,1) {$ $};

\draw[-] (8.05,0) -- (8.95,0);
\draw[-] (6.05,0) -- (6.95,0);
\draw[-] (7,0.05) -- (7,0.95);
\draw[-] (8.05,1) -- (8.95,1);
\draw[-] (6.05,1) -- (6.95,1);
\draw[-] (8,0.05) -- (8,0.95);

\draw[->] (1.05,0) -- (3.95,1);
\draw[->] (1.05,1) -- (3.95,0);
\draw[->] (6.05,0) -- (8.95,1);
\draw[<-] (6.05,1) -- (8.95,0);
\end{tikzpicture}
\end{center}
Note that the assumption $s(A)\not=s(A')$ and $s(B)\not= s(B')$ implies that $s(Y)=s(W)$. Hence if there are $a$ directed edges between $Y$ and $W$ pointing from $Y$ to $W$, then there are $1+a$ directed edges between $Y$ and $W$ pointing from $W$ to $Y$. The contributions to $o(C)$ thus differ by $1$, but since on the left we would get one extra loop, the total contribution to $\sum_Co(C)$ is the same again.

Now assume that $s(A)\not=s(A')$ and $s(B)=s(B')$. The differences to the previous case are that there are no extra loops (or two extra loops, which we can ignore) from $s(B)=1$ or not, and that the vertical edges on the right are directed, which we can assume to both point downwards.

Again we have three cases to check, and it turns out that again all contributions to $\sum_Co(C)$ are the same. For example, in the third case we have to count either both of the two extra directed edges, or none of them. As we count mod $2$, this has no effect.

Now consider the assumption $s(A)=s(A')$ and $s(B)\not=s(B')$, leading again to an extra loop in $\overline{\Gamma}_\mathcal{C}(a,c)$. This leads to the following picture.
\begin{center}
\begin{tikzpicture}
\node[circle,draw, inner sep=0pt, minimum size=3pt] at (1,0) {$ $};
\node[circle,draw, inner sep=0pt, minimum size=3pt] at (2,0) {$ $};
\node[circle,draw, inner sep=0pt, minimum size=3pt] at (3,0) {$ $};
\node[circle,draw, inner sep=0pt, minimum size=3pt] at (4,0) {$ $};

\node[circle,draw, inner sep=0pt, minimum size=3pt] at (1,1) {$ $};
\node[circle,draw, inner sep=0pt, minimum size=3pt] at (2,1) {$ $};
\node[circle,draw, inner sep=0pt, minimum size=3pt] at (3,1) {$ $};
\node[circle,draw, inner sep=0pt, minimum size=3pt] at (4,1) {$ $};

\draw[-] (3.05,0) -- (3.95,0) node [above,pos=0.5] {$0$};
\draw[-] (1.05,0) -- (1.95,0) node [above,pos=0.5] {$0$};
\draw[-] (2.05,0) -- (2.95,0);
\draw[-] (3.05,1) -- (3.95,1) node [above,pos=0.5] {$0$};
\draw[-] (1.05,1) -- (1.95,1) node [above,pos=0.5] {$0$};
\draw[-] (2.05,1) -- (2.95,1);

\node [scale=0.7] at (2,-0.3) {$(B',A)$};
\node [scale=0.7] at (3,-0.3) {$(B',A')$};
\node [scale=0.7] at (2,1.2) {$(B,A)$};
\node [scale=0.7] at (3,1.2) {$(B,A')$};
\node [scale=0.7] at (0.7,0) {$Z$};
\node [scale=0.7] at (4.3,0) {$W$};
\node [scale=0.7] at (0.7,1) {$X$};
\node [scale=0.7] at (4.3,1) {$Y$};

\node[circle,draw, inner sep=0pt, minimum size=3pt] at (6,0) {$ $};
\node[circle,draw, inner sep=0pt, minimum size=3pt] at (7,0) {$ $};
\node[circle,draw, inner sep=0pt, minimum size=3pt] at (8,0) {$ $};
\node[circle,draw, inner sep=0pt, minimum size=3pt] at (9,0) {$ $};

\node[circle,draw, inner sep=0pt, minimum size=3pt] at (6,1) {$ $};
\node[circle,draw, inner sep=0pt, minimum size=3pt] at (7,1) {$ $};
\node[circle,draw, inner sep=0pt, minimum size=3pt] at (8,1) {$ $};
\node[circle,draw, inner sep=0pt, minimum size=3pt] at (9,1) {$ $};

\draw[-] (8.05,0) -- (8.95,0) node [above,pos=0.5] {$0$};
\draw[-] (6.05,0) -- (6.95,0) node [above,pos=0.5] {$0$};
\draw[-] (7,0.05) -- (7,0.95);
\draw[-] (8.05,1) -- (8.95,1) node [above,pos=0.5] {$0$};
\draw[-] (6.05,1) -- (6.95,1) node [above,pos=0.5] {$0$};
\draw[-] (8,0.05) -- (8,0.95);

\node [scale=0.7] at (7,-0.3) {$(B',A)$};
\node [scale=0.7] at (8,-0.3) {$(B',A')$};
\node [scale=0.7] at (7,1.2) {$(B,A)$};
\node [scale=0.7] at (8,1.2) {$(B,A')$};
\node [scale=0.7] at (5.7,0) {$Z$};
\node [scale=0.7] at (9.3,0) {$W$};
\node [scale=0.7] at (5.7,1) {$X$};
\node [scale=0.7] at (9.3,1) {$Y$};

\draw[->] (2.2,1) -- (2.5,1);
\draw[->] (2.2,0) -- (2.5,0);
\end{tikzpicture}
\end{center}
An argument as above shows that for all three cases of connecting the endpoints $X,Y,Z$ and $W$ the contribution to $\sum_Co(C)$ this time around differs by $1$. However, notice that since $s(A)=s(A')$ this pair contributes to $v_i$ for the appropriate $i$. Also, since $\mathcal{P}$ is maximal and $B,B'$ are not matched by $\mathcal{P}$, this pair contributes $1$ to $u_i/2$. Thus, taking the distortion into account will lead to the correct result.

The final case $s(A)=s(A')$ and $s(B)= s(B')$ is again similar, requiring again the distortion to give the right outcome.
\end{proof}

\begin{remark}
Recall that we did not quite finish the proof that the Whitney trick does not affect the Steenrod square in the case where $k=|y|-1$. In view of partial combinatorial matchings, we can think of $\sq^c(x')$ as being obtained using a partial matching which combines $(B,P)$ with $(B,M)$ and $(B',P)$ with $(B',M)$ instead of $(B,P)$ with $(B',P)$ and $(B,M)$ with $(B',M)$, so the above proof shows that the Steenrod square is the same for both ways. Notice that distortion plays no role as $s(P)\not=s(M)$.
\end{remark}

Lemma \ref{lem:otherway} states that we can calculate the Steenrod square from the information given by $\Gamma_\mathcal{P}(a,\varphi)$, once we extend $\mathcal{P}$ to a combinatorial matching $\mathcal{C}$ for $\varphi$. If the $1$-flow category is in primary Smith normal form, the values $u_i$ are all $0$. In the process of turning a $1$-flow category into primary Smith normal form we need to keep track of the changes in distortions.

Instead of keeping track of the $1$-dimensional moduli spaces $\M(a,c)$ to calculate the Steenrod square, we can now just keep track of the $\Gamma_\mathcal{P}(a,c)$. From the point of view of a computer program, this is presumably not much of an improvement, but in fact we only need to keep track of the equivalence class of each $\Gamma_\mathcal{P}(a,c)$. This is going to be an improvement, as we can always find a rather simple representative of the special graph structure.

Recall that by definition, vertices in $\Gamma$ have valency at most 2, so the connected components of $\Gamma$ consist of intervals (with vertices) or circles.

\begin{lemma}\label{lem:nicegamma}
Let $\Gamma$ be a special graph structure. Then $\Gamma$ is equivalent to a special graph structure $\Gamma'$ which has at most one circle component and in which case this component is a loop in $L$. Furthermore, the interval components of $\Gamma'$ either consist of a single edge $e\in E'$ with $f(e)=0$, or of three edges $e_1,e_2,e$ with $e_1,e_2\in E'$, $e\in E''$, and $f(e_1)=0=f(e_2)$.
\end{lemma}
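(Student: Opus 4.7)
The plan is to prove the lemma by induction on the total number of edges of $\Gamma$, repeatedly using the local moves (a)--(i) to simplify. As a preliminary step I would apply move (c) to each edge of $E'$ so that all framing values on $E'$ become $0$, accumulating at most one loop in $L$ for every edge whose framing was flipped; any pair of loops so produced can then be removed via move (e).

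Next I turn to the interval components. Within any interval, conditions (\ref{item:gs2}) and (\ref{item:gs3}) of Definition \ref{def:graphstructure} force the edges to alternate $E', E-E', E', \ldots, E-E', E'$, so the total number of edges is an odd $2k+1$. The heart of the argument is to show that whenever $2k+1 \geq 5$ the interval can be shortened by two edges. I would do this by looking at three consecutive edges in the interior: if they read $E', X, E'$ with $X \in E-E'$ undirected (so its endpoints have opposite signs), move (g) contracts them to a single $E'$ edge with framing $0$; if they read $X_1, E', X_2$ with $X_1, X_2 \in E''$ directed consistently, move (f) collapses them to a single $E-E'$ edge.

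The main obstacle will be configurations where the directedness of the intermediate $X$ edges does not immediately match the hypotheses of (f) or (g). To handle these I plan to rearrange the interval using moves (a), (b), and (i), which swap the roles of directed/undirected and parallel/crossing edges while preserving $o(\Gamma)$, together with move (d), which trades a directed edge for an undirected edge plus a loop. A case analysis on the four consecutive sign values at the vertices of the chosen window should show that finitely many such manoeuvres always produce a local configuration on which (f) or (g) can act. Once this is in hand, iteration leaves each interval with either one edge in $E'$ of framing $0$ or three edges $e_1,e,e_2$ with $e_1,e_2\in E'$ of framing $0$; in the three-edge case the middle edge must lie in $E''$, since otherwise it would be an undirected $E-E'$ edge and one further application of (g) would shorten the interval, contradicting terminality.

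For circle components I would argue analogously, cycling the same contraction moves around the circle to reduce it first to a bigon of two parallel $E'$ edges (with framings $0$), then invoking move (h) to replace that bigon by a single loop in $L$, which removes all vertices from the component. Finally, any pair of loops in $L$ cancels via move (e), so at most one loop remains overall. This leaves $\Gamma$ in the claimed normal form.
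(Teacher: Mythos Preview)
Your approach is essentially the paper's: induction on the number of edges, using (c) and (e) to normalise framings and loops, then (g), (h), (f) together with (d) to shorten components. Two small points deserve correction. First, the bigon to which move (h) applies consists of one $E'$ edge and one $E-E'$ edge, not ``two parallel $E'$ edges''; indeed a bigon with two $E'$ edges is forbidden by condition~(\ref{item:gs3}) of Definition~\ref{def:graphstructure}. Second, the detour through moves (a), (b), (i) is unnecessary: once framings are $0$, any undirected edge in $E-E'$ sits in a window $E',X,E'$ (or the bigon of (h)) and (g) applies; if every $E-E'$ edge is directed, either each component is already a three-edge interval, or some component contains two directed edges and a single application of (d) aligns them so that (f) applies to an $X_1,E',X_2$ window. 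This is exactly how the paper organises the induction, and it avoids the case analysis you anticipate.
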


\begin{proof}
The proof is by induction on the number of edges in $\Gamma$. If $\Gamma$ has only one edge, this edge $e$ will form an interval component. If $f(e)=1$, we can use a move (c) to change the framing by adding a loop. If the set of loops $L$ consists of more than one element, use moves (e) to reduce the number of loops. This finishes the induction start.

Now assume the result holds for all special graph structures with no more than $n$ edges, $n\geq 1$, and let $\Gamma$ be a special graph structure wih $n+1$ edges. If every edge is labelled with $0$ or $1$, each edge is its own interval component, and we can use moves (c) and (e) to bring the special graph structure into the desired form. Otherwise the set of unlabelled edges $E-E'$ is non-empty. Notice that the endpoints of $e\in E-E'$ are not in $\partial V$.

Assume there exists an undirected edge $e\in E-E'$. Then, up to (c) moves we can assume that $e$ fits into the left-hand side of a move (g) or (h). In both cases, we can reduce the number of edges up to equivalence and invoke induction.

If all the edges in $E-E'$ are directed, there either exists a component which has more than one directed edge, or all the components are interval components with three edges. In the latter case we can use moves (c) to get all labels to be $0$, and then reduce the number of loops with move (e) to get the desired form. If there exists a component with more than one directed edge, first use moves (d) to ensure that all directed edges in this component have the same direction. As there are no undirected edges by assumption, the left hand-side of move (f) fits into the component (after possibly another move (c)). Up to equivalence, we can again reduce the number of edges, and hence use induction. The result follows.
\end{proof}

\begin{definition}
Let $(\Gamma, I)$ consist of $\Gamma$ a special graph structure and $I$ an ordered pairing of the boundary points $\partial V$ of $\Gamma$. Let $\Gamma'$ be a special graph structure equivalent to $\Gamma$ as in Lemma \ref{lem:nicegamma}. Then $I$ is also an ordered pairing of the boundary points of $\Gamma'$. Furthermore, assume that the boundary points of every interval component of $\Gamma'$ are paired in $I$, and that if the interval component contains a directed edge then the direction of the edge agrees with the direction coming from the ordered pairing in $I$.

Given such a $\Gamma'$, define
\[
 o(\Gamma,I) = o(\Gamma''),
\]
where $\Gamma''$ is the sub-special graph structure of $\Gamma'$ where all interval components have been removed.
\end{definition}

\begin{lemma}\label{lem:whitneyeasy}
Let $(\cC,s,f)$ be a framed $1$-flow category and $\mathcal{P}$ a partial combinatorial matching. Let $x,y\in \Ob(\cC)$ with $|x|=|y|+1$ and assume $\M(x,y)$ contains two points $P,M$ with $s(P)=0$ and $s(M)=1$. Assume further that either $P,M$ are matched in $\mathcal{P}$ or there exist $P',M'\in \M(x,y)$ with $s(P')=0$ and $s(M')=1$ such that $(P,P'), (M,M')\in\mathcal{P}$.

Let $(\cC',s',f')$ be the framed $1$-flow category obtained by performing the Whitney trick on $P,M$, and let $\mathcal{P}'$ be the partial combinatorial matching obtained by removing the pairings involving $P,M$ and adding $(P',M')$ if $P$ and $M$ are not paired in $\mathcal{P}$. Then
\[
 o(\Gamma_\mathcal{P}(a,y),I) = o(\Gamma_{\mathcal{P}'}(a',y'),I)
\]
and
\[
 o(\Gamma_\mathcal{P}(x,d),J) = o(\Gamma_{\mathcal{P}'}(x',d'),J)
\]
for all $a,d\in \Ob(\cC)$ with $|a|=|x|+1$ and $|d|=|y|-1$, all $I$ ordered pairings of boundary points of $\Gamma_\mathcal{P}(a,y)$, and all $J$ ordered pairings of boundary points of $\Gamma_\mathcal{P}(x,d)$.
\end{lemma}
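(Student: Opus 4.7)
The plan is to realize the passage from $\Gamma_\mathcal{P}(a,y)$ to $\Gamma_{\mathcal{P}'}(a',y')$, and analogously from $\Gamma_\mathcal{P}(x,d)$ to $\Gamma_{\mathcal{P}'}(x',d')$, as an explicit composition of local moves from the list (a)--(i). Invariance of $o(\cdot,I)$ under $\sim$ will then deliver both equalities at once.

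For the first equality I would work locally around each $A\in\M(a,x)$. If $(P,M)\in\mathcal{P}$, the relevant local subgraph in $\Gamma_\mathcal{P}(a,y)$ is the four-vertex chain $w_P - (P,A) - (M,A) - w_M$, where the outer two edges $e_P,e_M\in E'$ come from the intervals of $\M(a,y)$ meeting at $(P,A)$ and $(M,A)$, and the inner edge is undirected (in $E-E'$) because $s(P)\neq s(M)$ and it is contributed by the matching. The Whitney trick identifies $(P,A)\sim(M,A)$, so $e_P$ and $e_M$ fuse into a single interval in $\M(a',y')$ with framing $f(e_P)+f(e_M)$; this is exactly move (g) applied locally (after first using (c) to normalize $f(e_P),f(e_M)$ to zero). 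In the alternative case $(P,P'),(M,M')\in\mathcal{P}$, the local subgraph at $A$ contains two parallel directed edges in $E''$ between same-sign pairs, namely $(P,A)\to(P',A)$ and $(M,A)\to(M',A)$. I would first apply move (a) to replace these with the two undirected edges $(P,A)-(M,A)$ and $(P',A)-(M',A)$. The inner pair is then handled exactly as in the first case by move (g), while the outer pair is precisely the matching edge contributed by $(P',M')\in\mathcal{P}'$ in $\Gamma_{\mathcal{P}'}(a',y')$.

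The second equality runs along identical lines with the roles of $\M(a,x)$ and of $(P,A),(M,A)$ played by $\M(y,d)$ and $(C,P),(C,M)$ respectively. The only extra ingredient is the framing correction $\chi$ of Definition~\ref{def:whitney}, which adds $1$ to the framing of the merged component of $\M(x',d')$ for every gluing at a $C$ with $s(C)=1$. On the graph side, the definition of $\Gamma_\mathcal{P}(x,d)$ attaches a loop to $L$ precisely when such a matching edge meets a vertex with $s=1$, and these loops are exactly what gets absorbed into the new framing by move (c). The case analysis for $(P,P'),(M,M')\in\mathcal{P}$ then parallels the upstairs argument.

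The principal obstacle will be the bookkeeping: at each step one must track signs, framings and loops simultaneously, and verify that the vertices destroyed by the Whitney trick never lie in $\partial V$ of the original graph, so that $I$ and $J$ remain well-defined pairings after the transformation. This last point is ensured by the matching hypothesis on $P,M$, which forces every such vertex to have valency $2$. Beyond this care, the only tools needed are moves (a), (c), (g), the compatibility condition, and the invariance of $o$ under $\sim$ established in Subsection~\ref{subsec:moves}.
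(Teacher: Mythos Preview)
Your approach is essentially the same as the paper's: both reduce the two equalities to applications of the local moves (a) and (g), together with (c) for framing normalization. Your treatment of the $(a,y)$ case is slightly more detailed than the paper's, and is correct.

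For the $(x,d)$ case, however, your claim that it ``runs along identical lines'' glosses over an asymmetry you should make explicit. In $\Gamma_\mathcal{P}(a,y)$ the pair $P,M$ lies in the \emph{lower} moduli space $\M(x,y)$, so the matching edge $(P,A)$--$(M,A)$ is always present. In $\Gamma_\mathcal{P}(x,d)$ the pair $P,M$ lies in the \emph{upper} moduli space, and by the definition of $\Gamma_\mathcal{P}$ the edge $(B,P)$--$(B,M)$ is only added when $B\in\M(y,d)$ is itself unmatched in $\mathcal{P}$. When $B$ is matched to some $B'$, the local picture instead contains the two edges $(B,P)$--$(B',P)$ and $(B,M)$--$(B',M)$ coming from the lower matching, and the passage to $\Gamma_{\mathcal{P}'}(x',d')$ is effected by move (a) rather than (g). The paper's proof makes exactly this $B$-matched versus $B$-unmatched case split; once you add it, your argument is complete and coincides with theirs.
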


\begin{proof}
If $P,M$ are matched, the result for $(a,y)$ follows directly from move (g). For $(x,d)$ consider the vertices $(B,P), (B,M)\in \M(c,d)\times \M(x,c)$ for some object $c$. If $B$ is not matched in $\mathcal{P}$ and we get an edge between $(B,P)$ and $(B,M)$ in $\Gamma_\mathcal{P}(x,d)$ together with a loop if $s(B)=1$.  Using move (g) we get exactly $\Gamma_\mathcal{P}(x',d')$ by the definition of the Whitney trick. If $B$ is matched to some $B'$, we get from $\Gamma_\mathcal{P}(x,d)$ to $\Gamma_\mathcal{P}(x',d')$ with a move (a).

Now assume $P$ is matched to $P'$ and $M$ is matched to $M'$. For $(a,y)$ we have to use a move (a) to get the desired result, while for $(x,d)$ we have to consider again vertices $(B,P)$ and $(B,M)$ together with the cases whether $B$ is matched or not. Again one checks that the special graph structures are equivalent.
\end{proof}

\begin{remark}
Lemma \ref{lem:whitneyeasy} basically tells us that we can perform Whitney tricks without having to worry too much about partial combinatorial matchings. For a computer program the Whitney trick is more or less automatic.
\end{remark}

\subsection{Effects of handle slides on special graph structures}\label{subsec:effects}
Handle slides affect the special graph structures in a more significant way. The following lemmas describe how the various special graph structures change during the process of turning the 1-flow category into Smith normal form.

Let $\cC$ be a framed $1$-flow category and $\mathcal{P}$ a partial combinatorial matching, where we assume that all points of a $0$-dimensional moduli space $\M(a,b)$ have the same sign (this can be achieved using the Whitney trick). Let $\cC^\varepsilon_S$ be the result of sliding an object $x$ over $y$. We can define a partial combinatorial matching $\mathcal{P'}$ for $\cC^\varepsilon_S$ as follows. The inclusion $\M(a,b)\subset \M(a',b')$ for all objects $a,b$ with $|a|=|b|+1$ allows us to consider $\mathcal{P}\subset \mathcal{P'}$.
If $Y\in \M(y,b)$, write $Y'\in \M(x',b')$ for the new point, and similarly write $X'\in \M(a',y')$ for $X\in \M(a,x)$. If $Y_1,Y_2\in \M(y,b)$ are paired as $(Y_1,Y_2)\in \mathcal{P}$, we also pair $(Y_1',Y_2')\in \mathcal{P'}$. For $X_1,X_2\in \M(a,x)$ paired as $(X_1,X_2)\in \mathcal{P}$, we pair them as $(X_1',X_2')\in \mathcal{P'}$ provided that $\varepsilon = 0$. If $\varepsilon=1$ we pair them as $(X_2',X_1')\in \mathcal{P}'$.
Note that we assume that $X_1$ and $X_2$ have the same sign, so the order matters. The assumption that they have the same sign is justified by the following.

\begin{lemma}\label{lem:singleslide}
Let $\cC$ be a framed $1$-flow category with all points of a $0$-dimensional moduli space having the same sign, and $\mathcal{P}$ a partial combinatorial matching. Let $\cC'$ be the result of sliding an object $x$ over $y$, and let  $\mathcal{P}'$ be the partial combinatorial matching for $\cC'$ described above. Then we have
\[
 \Gamma_{\mathcal{P}'}(x',d') = \Gamma_\mathcal{P}(x,d) \sqcup \Gamma'_\mathcal{P}(y,d)
\]
where $\Gamma'_\mathcal{P}(y,d)$ agrees with $\Gamma_\mathcal{P}(y,d)$ except that the sign function $s'$ on vertices is given by $s'(X,Y)= \varepsilon + s(X,Y)$ for the vertices $(X,Y)\in \M(c',d')\times \M(y',c')$.

Furthermore,
\[
 \Gamma_{\mathcal{P}'}(b',d') = \Gamma_\mathcal{P}(b,d)
\]
for all $b\in \Ob(\cC)$ with $|b|=|x|$, $b\not=x$ and $|d|=|x|-2$, and
\[
 \Gamma_{\mathcal{P}'}(a',c') = \Gamma_{\mathcal{P}}(a,c) \sqcup I_\mathcal{P}(a,c),
\]
where $a,c\in \Ob(\cC)$ with $|a|=|x|+1=|c|+2$, and $I_\mathcal{P}(a,c)$ is a special graph structure containing one interval $I_{B,A}$ for every pair of points $(B,A)\in \M(y,c)\times \M(a,x)$ with neither $B$ nor $A$ matched by $\mathcal{P}$. The framing value of $I_{(B,A)}$ is given by $\varepsilon+\varepsilon_B$.
\end{lemma}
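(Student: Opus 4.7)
The plan is to verify each of the three identifications by unpacking the handle-slide definitions of $\cC^\varepsilon_S$ and $\mathcal{P}'$, then directly comparing vertices, $E'$-edges, $(E-E')$-edges, signs, and framings on both sides.

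The second identification, $\Gamma_{\mathcal{P}'}(b',d') = \Gamma_\mathcal{P}(b,d)$ for $b \neq x$, is essentially automatic: the handle slide alters only $\M(x',b')$, $\M(a',y')$, and $\M(a',c')$ (when $|a| = |x|+1$), none of which enter $\Gamma(b',d')$ when $b \neq x$, and $\mathcal{P}'$ restricts to $\mathcal{P}$ on all the relevant moduli spaces.

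For the first identification, the vertex set $\bigsqcup_{b'} \M(b',d') \times \M(x',b') = \bigsqcup_b \M(b,d) \times (\M(x,b) \sqcup \M(y,b))$ splits cleanly into the vertices of $\Gamma_\mathcal{P}(x,d)$ and $\Gamma_\mathcal{P}(y,d)$. The handle-slide sign shift $s'(Y) = \varepsilon + s(Y)$ on the $\M(y,b) \subset \M(x',b')$ summand exactly implements the modified signs in $\Gamma'_\mathcal{P}(y,d)$. The $E'$-edges from intervals in the $1$-dimensional space $\M(x',d')$, which the boundary condition forces to decompose as $\M(x,d) \sqcup \M(y,d)$ with inherited framings, distribute between the two pieces, and the $(E-E')$-edges -- whether from the inclusion $\mathcal{P} \subset \mathcal{P}'$ or from the doubled pairings $(Y_1',Y_2') \in \M(x',b')$ -- each stay within a single piece.

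For the third and most substantial identification, I would use the decomposition $\M(a',c') = \M(a,c) \sqcup \M(y,c) \times [0,1] \times \M(a,x)$. Vertex sets at $b' \notin \{x',y'\}$ are unchanged, while at $b' = x'$ and $b' = y'$ one acquires a copy of $\M(y,c) \times \M(a,x)$ on each side; each new interval $\{B\} \times [0,1] \times \{A\}$ supplies the promised $E'$-edge $I_{B,A}$ joining the two copies, with framing $1 + \varepsilon + s(B)$. The main obstacle is then the case where $B$ or $A$ is matched in $\mathcal{P}$: the doubling in $\mathcal{P}'$ produces $(E-E')$-edges on both the $x'$- and $y'$-sides which, together with $I_{B,A}$ and $I_{B',A}$ (or $I_{B,A'}$), assemble into $4$-cycles. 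A careful case analysis shows that under the hypothesis that same moduli-space points share signs, the two pairing edges in each such $4$-cycle are directed and run in opposite senses around the cycle, yielding $D(C) = 1$; meanwhile $F(C) = 2(1+\varepsilon+s(B)) \equiv 0 \pmod 2$, so $o(C) = 0$, and any extra loops from the $A$-pairing loop rule combine in pairs so that their total parity contribution vanishes. These matched $4$-cycles are thus equivalent under the local moves of Subsection \ref{subsec:moves} to the empty graph, so that up to this equivalence only the unmatched pairs $(B,A)$ contribute genuine components, producing exactly $I_\mathcal{P}(a,c)$ alongside $\Gamma_\mathcal{P}(a,c)$.
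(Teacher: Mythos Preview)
Your approach is essentially the paper's: decompose the post-slide moduli spaces and, in the third case, show that whenever $B$ or $A$ is matched the resulting $4$-cycle $C$ has $o(C)=0$ (together with any attendant loops) and can be discarded, leaving only the unmatched intervals $I_{B,A}$.

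There is, however, a gap in your summary of the $A$-matched subcase when $\varepsilon=1$. Your claim that the two pairing edges always run in opposite senses (so $D(C)=1$) and that the $A$-pairing loops always cancel in pairs is only correct for $\varepsilon=0$. Recall that $\mathcal{P}'$ is defined to \emph{reverse} the order of the pair $(A',\bar A')$ on the $\M(a',y')$ side precisely when $\varepsilon=1$; consequently the two directed edges then point the \emph{same} way around the $4$-cycle, giving $D(C)=0$ and hence $o(C)=1$. Likewise the loop rule adds a loop when $s'(B)=1$ on the $y'$-side and when $s'(B')=\varepsilon+s(B)=1$ on the $x'$-side, so the total number of extra loops has parity $s(B)+(\varepsilon+s(B))=\varepsilon$: for $\varepsilon=1$ there is exactly one extra loop, not a cancelling pair. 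The net contribution is still zero, because this single loop offsets $o(C)=1$, and this compensation is exactly the mechanism the paper invokes. So your plan is correct, but the justification for this subcase needs to be replaced by the $\varepsilon$-dependent cancellation just described.
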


\begin{proof}
We have $\M(x',d')=\M(x,d)\sqcup \M(y,d)$ and the signs on vertices $(X,Y)\in \M(c',d')\times \M(y',c')$ are given by $s'(X,Y)=\varepsilon+s(X,Y)$ with the $\varepsilon$ coming from the new sign of $Y\in \M(y',c')$. The framings of intervals in $\M(y,d)$ do not change. Adding edges from the partial combinatorial matching do not change directions or add extra loops as the signs of $\M(c',d')$ do not change, and signs of $\M(y',c')$ change uniformly by $\varepsilon$. Hence $\Gamma_{\mathcal{P}'}(x',d') = \Gamma_\mathcal{P}(x,d) \sqcup \Gamma'_\mathcal{P}(y,d)$ as claimed.

We also get $\Gamma_{\mathcal{P}'}(b',d') = \Gamma_\mathcal{P}(b,d)$ for $b\not=x$ with $|b|=|x|$ as there is no change for the special graph structure.

Now for $a,c$ with $|a|=|x|+1=|c|+2$ we have
\[
 \M(a',c') = \M(a,c) \sqcup \M(y,c)\times [0,1] \times \M(a,x)
\]
which means we get a new interval for every pair $(B,A)\in \M(y,c) \times \M(a,x)$ with endpoints $(B',A)$ and $(B,A')$, where $A'\in \M(a',y')$ and $B'\in \M(x',c')$ are new elements.

If $B$ is matched to $\bar{B}$ in $\mathcal{P}$, then $B'$ is matched to $\bar{B}'$ in $\mathcal{P}'$, and the intervals corresponding to $(B,A)$ and $(\bar{B},A)$ form a circle component $C$ together with two oriented edges coming from the partial combinatorial matching. Since $s(B)=s(\bar{B})$ by assumption, and $B'$ and $\bar{B}'$ are matched in the same order as $B$ and $\bar{B}'$, we get $o(C) = 1 + 1 = 0$, so we can drop $C$ from $\Gamma_{\mathcal{P}'}(a',c')$.

Similarly, if $B$ is not matched in $\mathcal{P}$, but $A$ is matched to $\bar{A}\in \M(a,x)$, the intervals corresponding to $(B,A)$ and $(B,\bar{A})$ form a circle component $C$ together with two oriented edges from $\mathcal{P}'$. Notice that these oriented edges come with an extra loop if $s'(B)$ or $s'(B')$ equal $1$. As $s'(B)+s'(B')=\varepsilon$, we get an extra loop if $\varepsilon=1$. But notice that for $\varepsilon=1$ the pair $A'$ and $\bar{A}'$ are directed opposite to the pair $A$ and $\bar{A}$, so that this extra loop is cancelled with $o(C)$. So again this has no impact on $\Gamma_{\mathcal{P}'}(a',c')$.
\end{proof}

Notice that even if $\mathcal{P}$ is maximal, $\mathcal{P'}$ as described above need not be maximal. But if $\mathcal{P}$ is maximal, each $\M(x,b)$ contains at most one point which is not matched. So if $\M(x,c)$ and $\M(y,c)$ each have one point which is not matched in $\mathcal{P}$, these two points are not matched in $\mathcal{P'}$ although they both are in $\M(x',c')$. Let $\mathcal{P''}$ be the partial combinatorial matching for $\cC'$ by matching these extra pairs in $\M(x',c')$ and $\M(a',y')$, where there is a similar situation. This matching is maximal, provided $\mathcal{P}$ is maximal. If the signs of the points agree, we choose the ordering where the element from $\M(x,c)$, resp. $\M(a,y)$, is first.

\begin{remark}\label{rem:distortion}
Let $I_x$, resp.\ $I_y$, be an ordered pairing of the boundary points of $\Gamma_\mathcal{P}(x,d)$, resp.\ $\Gamma_\mathcal{P}(y,d)$. Then $I'=I_x\sqcup I_y$ is an ordered pairing of the boundary points of $\Gamma_{\mathcal{P}'}(x',d')$ and it is easy to see that
\[
 o(\Gamma_{\mathcal{P}'}(x',d'),I') = o(\Gamma_\mathcal{P}(x,d),I_x)+o(\Gamma_\mathcal{P}(y,d),I_y).
\]
However, when passing to $\mathcal{P}''$ we have fewer boundary points in $\Gamma_{\mathcal{P}''}(x',d')$ and extra circle components will occur. 

A similar problem will appear in $\Gamma_{\mathcal{P}''}(a',c')$ where also some circles can change their form. Assume for example that there are two positively signed points $A_1,A_2\in\M(a,x)$, and one point $B\in\M(x,c)$. Furthermore, there is one point $B'\in \M(y,c)$. A handle slide of $x$ over $y$ will create one extra point $B''\in \M(x',c')$ which we can match with $B$. The points $A_1$ and $A_2$ are matched in $\mathcal{P}$ and hence in $\mathcal{P}''$, but the edge between $(B,A_1)$ and $(B,A_2)$ coming from this matching is only present in $\Gamma_\mathcal{P}(a,c)$. In $\Gamma_{\mathcal{P}''}(a',c')$ we get edges between $(B,A_i)$ and $(B'',A_i)$ from the pairing $(B,B'')$. 
Note that the distortions $d_\mathcal{P}(a,c)$ and $d_\mathcal{P''}(a',c')$ are different, and in the algorithm below we need to add $1$ to $o(a',c')$.
\end{remark}

We say that $\cC'$ is obtained from $\cC$ by a \em double slide\em, if it is the result of two handle slides of an object $x$ over $y$ of the same sign. For a double slide we can do different ways of matching points, and in particular we have to worry less about new interval components. So the points of $\M(y,c)$, resp.\ $\M(a,x)$, which are matched in $\mathcal{P}$ now appear twice in $\M(x',c')$, resp.\ $\M(a',y')$, and we can keep this matching for $\mathcal{P}'$ for both copies. 
But if $B\in \M(y,c)$ is not matched in $\mathcal{P}$, we get two copies $B',B''\in \M(x',c')$ which we can match. Similarly, $A\in \M(a,x)$ leads to two points $A',A''\in \M(a',y')$ that we can match. As both signs are the same, the order matters, and we choose a consistent order $(A',A'')\in \mathcal{P}'$. However, for $B\in \M(y,c)$ we choose $(B',B'')\in \mathcal{P}'$ if $s(B)=0$, and $(B'',B')\in \mathcal{P}'$ if $s(B) = 1$.

\begin{lemma}\label{lem:doubleslide}
Let $\cC$ be a framed flow category and $\mathcal{P}$ a partial combinatorial matching. Let $\cC'$ be the result of double sliding an object $x$ over $y$. With the partial combinatorial matching $\mathcal{P}'$ for $\cC'$ described above, we get
\[
 \Gamma_{\mathcal{P}'}(x',d') = \Gamma_\mathcal{P}(x,d) \sqcup L_\mathcal{P}(y,d),
\]
where $|d|=|x|-2$, and $L_\mathcal{P}(y,d)$ is the special graph structure with empty vertex set, and one loop for every interval component in $\Gamma_\mathcal{P}(y,d)$. For $b\not=x$ with $|b|=|x|$ we get
\[
  \Gamma_{\mathcal{P}'}(b',d') = \Gamma_\mathcal{P}(b,d).
\]
For $|a|=|x|+1=|c|+2$ we get
\[
 \Gamma_{\mathcal{P}'}(a',c')=\Gamma_\mathcal{P}(a,c).
\]
\end{lemma}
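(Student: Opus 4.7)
The plan is to deduce all three claims by iterating Lemma \ref{lem:singleslide} and then carefully accounting for the pairings that distinguish $\mathcal{P}'$ from the iterated single-slide matching. Let $\mathcal{P}_2$ denote the partial combinatorial matching on $\cC'$ that arises from applying the construction preceding Lemma \ref{lem:singleslide} to the first slide and then iteratively to the second. Two applications of that lemma give
\[
\Gamma_{\mathcal{P}_2}(x',d') = \Gamma_\mathcal{P}(x,d)\sqcup \Gamma'_\mathcal{P}(y,d)\sqcup \Gamma'_\mathcal{P}(y,d),
\]
\[
\Gamma_{\mathcal{P}_2}(b',d') = \Gamma_\mathcal{P}(b,d),\quad \Gamma_{\mathcal{P}_2}(a',c') = \Gamma_\mathcal{P}(a,c)\sqcup I_\mathcal{P}(a,c)\sqcup I_\mathcal{P}(a,c).
\]
The middle identity is already Case 2 of the lemma, since $\mathcal{P}'$ agrees with $\mathcal{P}_2$ on every 0-dimensional moduli space not involving $x$, $y$, $x'$, or $y'$, so no further work is needed there.

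The next step is to describe how $\mathcal{P}'$ enhances $\mathcal{P}_2$: for every $Y\in \M(y,b)$ unmatched in $\mathcal{P}$ the two copies $Y',Y''\in\M(x',b')$ get paired (with order determined by $s(Y)$), and for every unmatched $A\in\M(a,x)$ the copies $A',A''\in\M(a',y')$ get paired. By the construction of $\Gamma_{(\cdot)}(\cdot,\cdot)$ from a partial matching, these insertions add $E''$-edges, together with the appropriate loops, linking the first and second copies of each relevant special graph structure.

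For Case 1, a vertex $(B,Y)$ of $\Gamma_\mathcal{P}(y,d)$ is a boundary point exactly when both $B$ and $Y$ are unmatched in $\mathcal{P}$. Hence each interval component of $\Gamma_\mathcal{P}(y,d)$ acquires in $\mathcal{P}'$ precisely two new closing $E''$-edges joining the two copies at its two boundary vertices $(B_i,Y_i)$; each such closing edge contributes a loop to $L$ when $s(B_i)=1$. Together with the doubled interval this forms a single closed cycle $C$. The doubled $E'$-framings give $F(C)\equiv 0$, and in both admissible sign patterns ($s(B_1)=s(B_2)$ or $s(B_1)\neq s(B_2)$) the values $D(C)$ and the number of matching-induced loops combine with the component contribution $1$ to give a single $1\in\Z/2$. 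Equivalently, successive applications of local moves (c), (i), (g), and (h) reduce the cycle to one loop in $L$. Circle components of $\Gamma_\mathcal{P}(y,d)$ and pre-existing loops appear twice in $\Gamma_{\mathcal{P}_2}(x',d')$ without any closing structure, and their duplicates cancel pairwise (directly by (e) for loops, and after homogenising framings via (c) for circles). Altogether this yields $\Gamma_{\mathcal{P}'}(x',d')\sim \Gamma_\mathcal{P}(x,d)\sqcup L_\mathcal{P}(y,d)$.

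For Case 3 the approach is analogous. Each pair $(B,A)\in\M(y,c)\times\M(a,x)$ contributes two $E'$-edges of identical framing $1+\varepsilon+s(B)$ (one from each slide copy), whose four endpoints are joined by additional $E''$-edges coming either from $\mathcal{P}'$ (if $B$ or $A$ is unmatched in $\mathcal{P}$) or inherited as case-1 edges from $\mathcal{P}$ (if matched). A sub-case analysis over the four match/unmatch possibilities shows that in every configuration the doubled framings give $F(C)\equiv 0$, and the total $1+F(C)+D(C)+|L_{\text{new}}|$ vanishes in $\Z/2$; the resulting local quads may therefore be removed by the moves, leaving $\Gamma_\mathcal{P}(a,c)$. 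The main obstacle will be this Case 3 bookkeeping: the framing formula $1+\varepsilon+s(B)$, the sign-dependent ordering rule $(B',B'')$ versus $(B'',B')$ in $\mathcal{P}'$ (contrasted with the always-ordered $(A',A'')$ rule), and the interaction between newly-added case-2 edges and pre-existing case-1 edges must be tracked simultaneously through the sub-cases to confirm the required cancellation.
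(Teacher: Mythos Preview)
Your approach is correct and closely parallels the paper's own argument; both proofs form cycles from doubled intervals plus closing edges and verify the resulting $o$-values (equal to $1$ in Case~1, equal to $0$ in Case~3), with circle and loop components cancelling in pairs. Your organisation via iterating Lemma~\ref{lem:singleslide} to obtain an intermediate matching $\mathcal{P}_2$ before refining to $\mathcal{P}'$ is a mild repackaging of the paper's direct analysis of the double slide, and it buys you a cleaner starting point: the matched pairs $(B,A)$ have already been absorbed into $\Gamma_\mathcal{P}(a,c)$ by Lemma~\ref{lem:singleslide}, so only the unmatched ones survive in $I_\mathcal{P}(a,c)$.

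One small inconsistency: in Case~3 you refer to ``the four match/unmatch possibilities'' for $(B,A)$, but in your iterated setup the intervals in $I_\mathcal{P}(a,c)$ arise \emph{only} for $B$ and $A$ both unmatched in $\mathcal{P}$; the matched sub-cases were already disposed of inside the proof of Lemma~\ref{lem:singleslide}. So the only genuine sub-case analysis needed here is over $s(B)\in\{0,1\}$, which controls both the ordering of $(B',B'')$ in $\mathcal{P}'$ and the extra loop from the second-rule edge at $(B,A')\!-\!(B,A'')$; in each case the two directed closing edges and the possible loop combine to give $o(C)=0$. Tightening this would make the argument cleaner, but the conclusion stands.
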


\begin{proof}
We get
\[
 \M(x',d') = \M(x,d) \sqcup \M(y,d) \sqcup \M(y,d)
\]
and any non-trivially framed circle in $\M(y,d)$ appears twice and can be disregarded. Similarly, any circle component in $\Gamma_\mathcal{P}(y,d)$ appears twice in $\Gamma_{\mathcal{P}'}(x',d')$ and can be disregarded. If we have an interval with endpoints $(C,B)\in \M(c,d)\times \M(y,c)$ and $(\bar{C},\bar{B})\in \M(\bar{c},d)\times \M(y,\bar{c})$, we now get two intervals with endpoints $(C',B')$, $(\bar{C}',\bar{B}')$, and $(C',B'')$, $(\bar{C}',\bar{B}'')$. These two intervals are connected via directed edges from the partial matchings of $B'$, $B''$ and $\bar{B}',\bar{B}''$, and by the choice of matchings in $\mathcal{P}'$ we see that this component $C$ satisfies $o(C)=1$. We can therefore replace this component with a loop in $L_\mathcal{P}(y,d)$.

For $b\not=x$ with $|b|=|x|$ we get no change in the moduli spaces, so the result follows.

For $|a|=|x|+1=|c|+2$ each pair $(B,A)\in \M(y,c)\times \M(a,x)$ leads to two intervals with endpoints $(B',A)$, $(B,A')$ and $(B'',A)$, $(B,A'')$ in $\M(a',c')$. It is easy to see that these intervals combine to a component $C$ with $o(C)=0$, and hence can be disregarded in $\Gamma_{\mathcal{P}'}(a',c')$.
\end{proof}

If the cardinality of $\M(a,b)$ is odd, there is a point $A\in \M(a,b)$ which is not matched in any maximal partial combinatorial matching $\mathcal{P}$. We would like to assume that $s(A)=0$ if and only if $[a:b]>0$. This need not be preserved by handle slides, but we can adjust the matching.

\begin{example}
Assume $\M(x,c)$ contains four points, all positively framed, and matched accordingly by $\mathcal{P}$. Assume also that $\M(y,c)$ contains three points with identical framing, and after the handle slide, these three points have negatively framed copies in $\M(x',c')$. We want to change the partial combinatorial matching $\mathcal{P}'$ so that the non-matched point is positively framed. Essentially all we have to do is to break a match between two points $A_1,A_2$ and match one of them with the non-matched point $A\in \M(y,c)$.
\end{example}

For the next lemma, note that the boundary points of a special graph structure $\Gamma_\mathcal{P}(b,d)$ where $b,d$ are objects with $|b|=|d|+2$ and $\mathcal{P}$ is a maximal partial combinatorial matching are given by pairs $(B,A)\in \M(c,d)\times \M(b,c)$ with neither $B$ nor $A$ matched in $\mathcal{P}$. An ordered pairing $I(b,d)$ of boundary points is thus a collection of tuples $((B,A),(\bar{B},\bar{A}))$. If $s(B,A)=s(\bar{B},\bar{A})$ we interpret this order as a directed edge
\begin{center}
\begin{tikzpicture}
\node[circle,draw, inner sep=0pt, minimum size=3pt] at (0,0) {$ $};
\node[circle,draw, inner sep=0pt, minimum size=3pt] at (2,0) {$ $};
\node at (0,0.3) {$(B,A)$};
\node at (2,0.3) {$(\bar{B},\bar{A})$};
\draw[->] (0.05,0) -- (1,0);
\draw[-] (0.5,0) -- (1.95,0);
\end{tikzpicture}
\end{center}
while in the case $s(B,A)\not=s(\bar{B},\bar{A})$ we interpret this as an undirected edge
\begin{center}
\begin{tikzpicture}
\node[circle,draw, inner sep=0pt, minimum size=3pt] at (0,0) {$ $};
\node[circle,draw, inner sep=0pt, minimum size=3pt] at (2,0) {$ $};
\node at (0,0.3) {$(B,A)$};
\node at (2,0.3) {$(\bar{B},\bar{A})$};
\draw[-] (0.05,0) -- (1.95,0);
\end{tikzpicture}
\end{center}
By abuse of notation we identify such an edge with an element of $I(b,d)$.

Note that for a given point $A\in \M(b,c)$ not matched in $\mathcal{P}$ and a fixed object $d$ with $|d|=|c|-1$, there can be at most one boundary point $(B,A)$ in $\Gamma_\mathcal{P}(b,d)$.

\begin{lemma}\label{lem:changesign}
Let $\cC$ be a framed flow category and $\mathcal{P}$ a maximal partial combinatorial matching. Assume that $\M(b,c)$ is a $0$-dimensional moduli space with an odd number of points, and let $B$ be the point in $\M(b,c)$ not matched in $\mathcal{P}$. Assume that $s(B)$ is different from the sign of $[b:c]$. Then there is a maximal partial combinatorial matching $\mathcal{P}'$ such that the unmatched point $B'\in \M(b,c)$ in $\mathcal{P}'$ satisfies $s(B)\not=s(B')$, and the following holds.
\begin{enumerate}
\item Assume that $d$ is an object with $|d|=|b|-2$ such that $\Gamma_\mathcal{P}(b,d)$ has a boundary point $(C,B)$, that is, $\M(c,d)$ has an odd number of points.
Let $I(b,d)$ be a choice of ordered pairings of boundary points of $\Gamma_\mathcal{P}(b,d)$ with $((C,B),(\bar{C},\bar{B}))\in I(b,d)$ for some boundary point $(\bar{C},\bar{B})$. Let $I'(b,d)$ be the choice of ordered pairings of boundary points of $\Gamma_{\mathcal{P}'}(b,d)$ differing from $I(b,d)$ only as follows.

If 
\begin{tikzpicture}
\node[circle,draw, inner sep=0pt, minimum size=3pt] at (0,0) {$ $};
\node[circle,draw, inner sep=0pt, minimum size=3pt] at (2,0) {$ $};
\node at (0,0.3) {$(C,B)$};
\node at (2,0.3) {$(\bar{C},\bar{B})$};
\draw[->] (0.05,0) -- (1,0);
\draw[-] (0.5,0) -- (1.95,0);
\end{tikzpicture}
$\in I(b,d)$, then
\begin{tikzpicture}
\node[circle,draw, inner sep=0pt, minimum size=3pt] at (0,0) {$ $};
\node[circle,draw, inner sep=0pt, minimum size=3pt] at (2,0) {$ $};
\node at (0,0.3) {$(C,B')$};
\node at (2,0.3) {$(\bar{C},\bar{B})$};
\draw[-] (0.05,0) -- (1.95,0);
\end{tikzpicture}
$\in I'(b,d)$.

If 
\begin{tikzpicture}
\node[circle,draw, inner sep=0pt, minimum size=3pt] at (0,0) {$ $};
\node[circle,draw, inner sep=0pt, minimum size=3pt] at (2,0) {$ $};
\node at (0,0.3) {$(C,B)$};
\node at (2,0.3) {$(\bar{C},\bar{B})$};
\draw[-] (0.05,0) -- (1.95,0);
\end{tikzpicture}
$\in I(b,d)$, then
\begin{tikzpicture}
\node[circle,draw, inner sep=0pt, minimum size=3pt] at (0,0) {$ $};
\node[circle,draw, inner sep=0pt, minimum size=3pt] at (2,0) {$ $};
\node at (0,0.3) {$(C,B')$};
\node at (2,0.3) {$(\bar{C},\bar{B})$};
\draw[->] (0.05,0) -- (1,0);
\draw[-] (0.05,0) -- (1.95,0);
\end{tikzpicture}
$\in I'(b,d)$.

Furthermore,
\[
 o(\Gamma_\mathcal{P}(b,d),I(b,d)) = o(\Gamma_{\mathcal{P}'}(b,d),I'(b,d)).
\]
\item Assume that $a$ is an object with $|a|=|b|+1$ such that $\Gamma_\mathcal{P}(a,c)$ has a boundary point $(B,A)$, that is, $\M(a,b)$ has an odd number of points.
Let $I(a,c)$ be a choice of ordered pairings of boundary points of $\Gamma_\mathcal{P}(a,c)$ with $((B,A),(\bar{B},\bar{A}))\in I(a,c)$ for some boundary point $(\bar{B},\bar{A})$. Let $I'(a,c)$ be the choice of ordered pairings of boundary points of $\Gamma_{\mathcal{P}'}(a,c)$ differing from $I(a,c)$ only as follows.

If 
\begin{tikzpicture}
\node[circle,draw, inner sep=0pt, minimum size=3pt] at (0,0) {$ $};
\node[circle,draw, inner sep=0pt, minimum size=3pt] at (2,0) {$ $};
\node at (0,0.3) {$(B,A)$};
\node at (2,0.3) {$(\bar{B},\bar{A})$};
\draw[->] (0.05,0) -- (1,0);
\draw[-] (0.5,0) -- (1.95,0);
\end{tikzpicture}
$\in I(a,c)$, then
\begin{tikzpicture}
\node[circle,draw, inner sep=0pt, minimum size=3pt] at (0,0) {$ $};
\node[circle,draw, inner sep=0pt, minimum size=3pt] at (2,0) {$ $};
\node at (0,0.3) {$(B',A)$};
\node at (2,0.3) {$(\bar{B},\bar{A})$};
\draw[-] (0.05,0) -- (1.95,0);
\end{tikzpicture}
$\in I'(a,c)$.

If 
\begin{tikzpicture}
\node[circle,draw, inner sep=0pt, minimum size=3pt] at (0,0) {$ $};
\node[circle,draw, inner sep=0pt, minimum size=3pt] at (2,0) {$ $};
\node at (0,0.3) {$(B,A)$};
\node at (2,0.3) {$(\bar{B},\bar{A})$};
\draw[-] (0.05,0) -- (1.95,0);
\end{tikzpicture}
$\in I(a,c)$, then
\begin{tikzpicture}
\node[circle,draw, inner sep=0pt, minimum size=3pt] at (0,0) {$ $};
\node[circle,draw, inner sep=0pt, minimum size=3pt] at (2,0) {$ $};
\node at (0,0.3) {$(B',A)$};
\node at (2,0.3) {$(\bar{B},\bar{A})$};
\draw[->] (0.05,0) -- (1,0);
\draw[-] (0.05,0) -- (1.95,0);
\end{tikzpicture}
$\in I'(a,c)$.

Furthermore,
\[
 o(\Gamma_\mathcal{P}(a,c),I(a,c)) = o(\Gamma_{\mathcal{P}'}(a,c),I'(a,c)).
\]
\end{enumerate}
\end{lemma}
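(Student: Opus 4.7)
The first step is to construct the required partial matching $\mathcal{P}'$. Since $\M(b,c)$ has odd cardinality $2k+1$ and $\mathcal{P}$ is maximal, the point $B$ is the unique unmatched element of $\M(b,c)$ and the remaining $2k$ points are partitioned into $k$ pairs of $\mathcal{P}$. The hypothesis that $s(B)$ differs from the sign of $[b:c]=\sum_{A\in\M(b,c)}(-1)^{s(A)}$ places $B$ in the minority sign class, so among the matched $2k$ points at least one, say $B'$, has sign $1-s(B)$. Writing $B''$ for its partner, I define $\mathcal{P}'$ to be $\mathcal{P}$ with the pair $(B',B'')$ removed and the pair $(B,B'')$ inserted in its place (with ordering determined by our usual convention). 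Then $B'$ is the unmatched point of $\M(b,c)$ under $\mathcal{P}'$ and $s(B')\neq s(B)$, as required.

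Next I analyse the difference between the special graph structures in part~(1). The vertex sets of $\Gamma_\mathcal{P}(b,d)$ and $\Gamma_{\mathcal{P}'}(b,d)$ coincide, and the only change occurs among the matching-type edges contributed by the pair $(B',B'')\in\mathcal{P}$ versus the pair $(B,B'')\in\mathcal{P}'$. For each $C'\in\M(c,d)$ that is unmatched in $\mathcal{P}$, the pair $(B',B'')$ contributes precisely one edge between $(C',B')$ and $(C',B'')$ together with a loop in $L$ if $s(C')=1$; under $\mathcal{P}'$ that edge is replaced by an edge between $(C',B)$ and $(C',B'')$ with its loop if $s(C')=1$. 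At the distinguished unmatched $C$, the valency-one boundary vertex $(C,B)$ of $\Gamma_\mathcal{P}(b,d)$ acquires a new matching edge, while $(C,B')$ becomes the new boundary point of $\Gamma_{\mathcal{P}'}(b,d)$, and the sign of the boundary vertex toggles since $s(B)\neq s(B')$. That sign toggle is exactly what forces the prescribed toggle in directedness of the adjoining edge of $I(b,d)$ versus $I'(b,d)$. The equality $o(\Gamma_\mathcal{P}(b,d),I(b,d))=o(\Gamma_{\mathcal{P}'}(b,d),I'(b,d))$ will then be established by a sequence of local moves from Subsection~\ref{subsec:moves}, organised column by column over the unmatched $C'$: moves (a) and (b) carry out the swap in the matching edges in each column, move (c) absorbs framing shifts, and move (g) disposes of the cancelling loop pairs that arise. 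Part~(2) is entirely symmetric: $\M(b,c)$ now plays the role of the ``lower'' moduli space and the re-matching affects rows indexed by $A\in\M(a,b)$, with the same sequence of moves delivering the analogous identity.

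The main technical obstacle will be case analysis rather than any single clever step. One has to verify the move-theoretic equivalence in each combination of signs of $C,\bar C, B, B', B''$, and in each configuration of whether the affected vertices lie in the same or in different components of the special graph structure. Essentially every subcase reduces, via the local moves, to one already handled in the proof of Lemma~\ref{lem:otherway}, where the effect of replacing an edge coming from one matching with an edge coming from another matching in the same column of vertices was treated in detail. The present lemma is the boundary-aware analogue of that argument, in which the newly required toggle of direction in $I\to I'$ plays the role that the distortion $d_\mathcal{P}$ played in Lemma~\ref{lem:otherway}.
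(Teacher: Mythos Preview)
Your approach is essentially the paper's: construct $\mathcal{P}'$ by swapping a single pair in $\M(b,c)$, observe that the only difference between $\Gamma_\mathcal{P}(b,d)$ and $\Gamma_{\mathcal{P}'}(b,d)$ is a column of matching edges indexed by the unmatched points $C'\in\M(c,d)$, and then reduce via local moves. The analogy you draw with Lemma~\ref{lem:otherway} is apt.

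One refinement the paper makes that you should adopt: rather than picking any matched point $B'$ with $s(B')\neq s(B)$, the paper picks a matched \emph{pair} $(B',B'')$ with $s(B')=s(B'')\neq s(B)$. Such a pair always exists by a pigeonhole count (since $B$ is in the minority sign class, the majority class outnumbers the minority among the $2k$ matched points by at least two, so some pair lies entirely in the majority class). This choice pins down the types of the two relevant matching edges: the $\mathcal{P}$-edge $(C,B')$--$(C,B'')$ is directed, while the $\mathcal{P}'$-edge $(C,B)$--$(C,B'')$ is undirected. The paper then dispatches the comparison with a single application of move~(a) or~(i), avoiding the broader case analysis your more general choice would require. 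In particular, the key move in the paper's argument is~(i) (one directed edge, one undirected, adjacent sign pattern), which does not appear in your list (a), (b), (c), (g); you would need it, or else restrict the choice of $B''$ as the paper does.
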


\begin{proof}
There is a matched pair $(B',B'')$ in $\mathcal{P}$ such that $s(B)\not= s(B')=s(B'')$, and we define $\mathcal{P}'$ by matching $(B,B'')$, leaving $B'$ unmatched, and keeping all other matchings from $\mathcal{P}$. Given a $C\in \M(c,d)$ we have intervals $I,I',I''$ with endpoints $(C,B),(C,B'),(C,B'')$. If $C$ is matched in $\mathcal{P}$, these intervals are matched in the same way in both $\Gamma_\mathcal{P}(b,d)$ and $\Gamma_{\mathcal{P}'}(b,d)$. If $C$ is not matched by $\mathcal{P}$, then $(C,B)$ is connected to another endpoint $(\bar{C},\bar{B})$ with either $s(C,B)=s(\bar{C},\bar{B})$ or not.

Up to move-equivalence, the situation is as follows in $\Gamma_\mathcal{P}(b,d)$ and $\Gamma_{\mathcal{P}'}(b,d)$.
\begin{center}
\begin{tikzpicture}
\node[circle,draw, inner sep=0pt, minimum size=3pt] at (0,0) {$ $};
\node[circle,draw, inner sep=0pt, minimum size=3pt] at (1,0) {$ $};
\node[circle,draw, inner sep=0pt, minimum size=3pt] at (0,1) {$ $};
\node[circle,draw, inner sep=0pt, minimum size=3pt] at (1,1) {$ $};
\node[circle,draw, inner sep=0pt, minimum size=3pt] at (2,1) {$ $};
\node[circle,draw, inner sep=0pt, minimum size=3pt] at (3,1) {$ $};
\node[circle,draw, inner sep=0pt, minimum size=3pt] at (0,2) {$ $};
\node[circle,draw, inner sep=0pt, minimum size=3pt] at (1,2) {$ $};
\node[circle,draw, inner sep=0pt, minimum size=3pt] at (2,2) {$ $};
\node[circle,draw, inner sep=0pt, minimum size=3pt] at (3,2) {$ $};
\node at (0,-0.4) {$(C,B'')$};
\node at (0,1.3) {$(C,B')$};
\node at (0,2.3) {$(C,B)$};
\node at (3,2.3) {$(\bar{C},\bar{B})$};
\node at (1.3,2) {$($};
\node at (1.6,2) {$)$};
\node at (1,0.7) {$\varepsilon$};
\node at (1,1.7) {$\varepsilon+1$};
\node at (1.5,2.8) {$\Gamma_\mathcal{P}(b,d)$};
\draw[-] (0.05,0) -- (0.95,0);
\draw[-,blue] (1.05,0) -- (1.45,0);
\draw[-,blue] (3.05,1) -- (3.45,1);
\draw[-] (0.05,1) -- (0.95,1);
\draw[-,blue] (1.05,1) -- (1.95,1);
\draw[-] (2.05,1) -- (2.95,1);
\draw[-] (0.05,2) -- (0.95,2);
\draw[->,blue] (1.05,2) -- (1.5,2);
\draw[-,blue] (1.5,2) -- (1.95,2);
\draw[-] (2.05,2) -- (2.95,2);
\draw[->,blue] (0,0.95) -- (0,0.5);
\draw[-,blue] (0,0.5) -- (0,0.05);

\node[circle,draw, inner sep=0pt, minimum size=3pt] at (5,0) {$ $};
\node[circle,draw, inner sep=0pt, minimum size=3pt] at (6,0) {$ $};
\node[circle,draw, inner sep=0pt, minimum size=3pt] at (5,1) {$ $};
\node[circle,draw, inner sep=0pt, minimum size=3pt] at (6,1) {$ $};
\node[circle,draw, inner sep=0pt, minimum size=3pt] at (7,1) {$ $};
\node[circle,draw, inner sep=0pt, minimum size=3pt] at (8,1) {$ $};
\node[circle,draw, inner sep=0pt, minimum size=3pt] at (5,2) {$ $};
\node[circle,draw, inner sep=0pt, minimum size=3pt] at (6,2) {$ $};
\node[circle,draw, inner sep=0pt, minimum size=3pt] at (7,2) {$ $};
\node[circle,draw, inner sep=0pt, minimum size=3pt] at (8,2) {$ $};
\node at (5,-0.4) {$(C,B'')$};
\node at (5,1.3) {$(C,B')$};
\node at (5,2.3) {$(C,B)$};
\node at (8,2.3) {$(\bar{C},\bar{B})$};
\node at (6.3,2) {$($};
\node at (6.6,2) {$)$};
\node at (6,0.7) {$\varepsilon$};
\node at (6,1.7) {$\varepsilon+1$};
\node at (6.5,2.8) {$\Gamma_{\mathcal{P}'}(b,d)$};
\draw[-] (5.05,0) -- (5.95,0);
\draw[-,blue] (6.05,0) -- (6.45,0);
\draw[-,blue] (8.05,1) -- (8.45,1);
\draw[-] (5.05,1) -- (5.95,1);
\draw[-,blue] (6.05,1) -- (6.95,1);
\draw[-] (7.05,1) -- (7.95,1);
\draw[-] (5.05,2) -- (5.95,2);
\draw[->,blue] (6.05,2) -- (6.5,2);
\draw[-,blue] (6.5,2) -- (6.95,2);
\draw[-] (7.05,2) -- (7.95,2);
\draw[blue] plot [smooth] coordinates {(5,1.95) (4.7,1.6) (4.5,1) (4.7,0.4) (5,0.05)};
\end{tikzpicture}
\end{center}
where there may or may not be a directed edge in the top row. Using a move (a) or (i), we see that the graph structure $\Gamma_{\mathcal{P}'}(b,d)$ is equivalent to
\begin{center}
\begin{tikzpicture}
\node[circle,draw, inner sep=0pt, minimum size=3pt] at (5,0) {$ $};
\node[circle,draw, inner sep=0pt, minimum size=3pt] at (6,0) {$ $};
\node[circle,draw, inner sep=0pt, minimum size=3pt] at (5,1) {$ $};
\node[circle,draw, inner sep=0pt, minimum size=3pt] at (6,1) {$ $};
\node[circle,draw, inner sep=0pt, minimum size=3pt] at (7,1) {$ $};
\node[circle,draw, inner sep=0pt, minimum size=3pt] at (8,1) {$ $};
\node[circle,draw, inner sep=0pt, minimum size=3pt] at (5,2) {$ $};
\node[circle,draw, inner sep=0pt, minimum size=3pt] at (6,2) {$ $};
\node[circle,draw, inner sep=0pt, minimum size=3pt] at (7,2) {$ $};
\node[circle,draw, inner sep=0pt, minimum size=3pt] at (8,2) {$ $};
\node at (5,-0.4) {$(C,B'')$};
\node at (5,1.3) {$(C,B')$};
\node at (5,2.3) {$(C,B)$};
\node at (8,2.3) {$(\bar{C},\bar{B})$};
\node at (6.3,1.5) {$($};
\node at (6.6,1.5) {$)$};
\node at (6,0.7) {$\varepsilon$};
\node at (6,2.3) {$\varepsilon+1$};
\draw[-] (5.05,0) -- (5.95,0);
\draw[-,blue] (6.05,0) -- (6.45,0);
\draw[-,blue] (8.05,1) -- (8.45,1);
\draw[-] (5.05,1) -- (5.95,1);
\draw[-,blue] (6.025,1.975) -- (6.4,1.6);
\draw[->,blue] (6.975,1.025) -- (6.75,1.25);
\draw[-,blue] (6.8,1.2) -- (6.6,1.4);
\draw[-] (7.05,1) -- (7.95,1);
\draw[-] (5.05,2) -- (5.95,2);
\draw[->,blue] (6.025,1.025) -- (6.5,1.5);
\draw[-,blue] (6.5,1.5) -- (6.975,1.975);
\draw[-] (7.05,2) -- (7.95,2);
\draw[blue] plot [smooth] coordinates {(5,1.95) (4.7,1.6) (4.5,1) (4.7,0.4) (5,0.05)};
\end{tikzpicture}
\end{center}
Note that the diagonal edge in the component with $(C,B')$ is directed if and only if the original edge in the component of $(C,B)$ is not directed. Up to equivalence, the difference between $\Gamma_\mathcal{P}(b,d)$ and $\Gamma_{\mathcal{P}'}(b,d)$ is therefore as described.

The case of $\Gamma(a,c)$ is essentially the same and will be omitted.
\end{proof}

\begin{remark}
Note that the possibility \begin{tikzpicture}
\node[circle,draw, inner sep=0pt, minimum size=3pt] at (0,0) {$ $};
\node[circle,draw, inner sep=0pt, minimum size=3pt] at (2,0) {$ $};
\node at (0,0.3) {$(\bar{C},\bar{B})$};
\node at (2,0.3) {$(C,B)$};
\draw[->] (0.05,0) -- (1,0);
\draw[-] (0.05,0) -- (1.95,0);
\end{tikzpicture}
$\in I(x,d)$ is ruled out by requiring $((C,B),(\bar{C},\bar{B}))\in I(x,d)$. If we would require $((\bar{C},\bar{B}),(C,B))\in I(x,d)$ we could get an analogous situation with the arrows reversed.

It is important to note that the order $((C,B),(\bar{C},\bar{B}))$ is arbitrary, but needs to be consistently applied to each object $d$.
\end{remark}

\subsection{The algorithm}\label{subsec:algorithm}

We may turn a general framed $1$-flow category $\cC$ into a framed $1$-flow category in primary Smith normal form as follows. We begin with objects of maximal degree, and move on to lower degrees. More formally, assume our $1$-flow category satisfies the conditions
\begin{enumerate}
 \item[($1_m$)] If $b$ is an object with $|b|\geq m$, there is at most one non-empty $0$-dimensional moduli space $\M(a,b)$, which contains $p^k$ elements, all of which have the same framing, where $p\geq 2$ is a prime and $k\geq 1$.
 \item[($2_m$)] For each object $a$ with $|a|>m$ there exists at most one object $b$ with $|a|=|b|+1$ and $\M(a,b)$ non-empty.
\end{enumerate}

If $m\geq|a|$ for all objects $a\in \Ob(\cC)$, these conditions are trivially satisfied, and if $m \leq |a|$ for all objects $a\in \Ob(\cC)$, these conditions are equivalent to $\cC$ being in primary Smith normal form.

So let us assume that a framed $1$-flow category $\cC$ satisfies ($1_m$) and ($2_m$), and let us turn it into a framed $1$-flow category $\cC'$ satisfying ($1_{m-1}$) and ($2_{m-1}$).

Let $a$ be an object with $|a|=m$. If there exists an object $u$ with $|u|=m+1$ and $\M(u,a)$ non-empty, then the sum of the signs of all elements in a $0$-dimensional moduli space $\M(a,b)$ has to be zero as $\partial^2=0$. Using Whitney tricks, we can make all $0$-dimensional moduli spaces $\M(a,b)$ empty.

So assume $\M(u,a)$ is empty for all $u$ with $|u|=m+1$. Use Whitney tricks so that the number of points in $\M(a,b)$ is the same as the modulus of the sum of the signs of the elements in it.

Among the non-empty moduli spaces, find the object $b$ with $|b|=m-1$ such that the cardinality $|\M(a,b)|$ where we vary over $b$ is minimal. Then perform handle slides of objects $a'$ over $a$ (and Whitney tricks) to get a lower cardinality $|\M(a',b')|>0$ or make all $\M(a',b)$ empty. We can repeat this until we get objects $a,b$ with $\M(a',b)=\emptyset$ for all objects $a'\not=a$ with $|a'|=|a|$. We then slide objects $b$ over $b'$ to reduce the cardinality $|\M(a,b')|>0$ or make them empty. Again, after finitely many steps (and possibly sliding $a'$ over $a$ again), we get objects $a,b$ with $\M(a,b)$ has finitely many points, $\M(a,b')=\emptyset=\M(a',b)$ for all objects $a'\not=a$, $b'\not=b$ with $|a'|=|a|$ and $|b'|=|b|$.

Repeating this process will eventually get the $1$-flow category into Smith normal form, but ignoring the primary part. But this step can be done at the end with appropriate handle slides. 

During the handle slides, we need to keep track of the $1$-dimensional moduli spaces, or rather the special graph structures $\Gamma_\mathcal{P}(a,c)$ with $|a|=|c|+2$. In fact, we only need to keep track of the number of circle components $C$ with $o(C)=1$ and the interval components. Also, each interval component is equivalent to one that consists of only one standardly framed edge if the endpoints have different sign, or it consists of three edges if the endpoints have the same sign, one which is ordered and the other two standardly framed.

So for each pair $a,c$ of objects with $|a|=|c|+2$ let $I(a,c)$ be a collection of ordered pairs of the boundary points of $\Gamma_\mathcal{P}(a,c)$. Then define
\[
o(a,c):=o(\Gamma_\mathcal{P}(a,c),I(a,c))
\]

\begin{remark}\label{rem:newvalue}
When performing a handle slide, the boundary points of $\Gamma_\mathcal{P'}(a',c')$ are obtained from the old boundary points in $I(a,c)$ and the new edges described in Lemma \ref{lem:singleslide} after gluing in extra edges coming from the new pairings in $\mathcal{P'}$. This can result in new circle components $C$ for which we can calculate $o(C)$. This value needs to contribute to $o(a',c')$. Similarly, there can be long intervals which can be shortened to one or three (in case of a directed edge) edges by possibly adding loops. Such loops also need to contribute to $o(a',c')$.
\end{remark}

The algorithm can be described as follows.

{\bf Step 1} Obtain a framed $1$-flow category, for example using \cite{LipSarSq} or \cite{JLS}.

{\bf Step 2} Choose a maximal partial combinatorial matching $\mathcal{P}$, which means one groups the elements of $0$-dimensional moduli spaces $\M(a,b)$ into pairs, possibly leaving one point unpaired if there is an odd number of points. This can be done so that the unpaired point has the same sign as $[a:b]$.

{\bf Step 3} Form $\Gamma_\mathcal{P}(a,c)$ for every two objects $a,c$ with $|a|=|c|+2$. The endpoints of intervals are given by points $(B,A)\in \M(b,c)\times \M(a,b)$ for some $b$ where both $A$ and $B$ are not paired in $\mathcal{P}$. Minimize the length of intervals in the equivalence class of $\M(a,b)$ and determine $o(a,c)$ and $I(a,c)$.

{\bf Step 4} Perform handle slides according to the Smith normalization process, starting with objects of maximal degree. If we have a single slide of $x$ over $y$, we obtain
\[
 o(a',y')=o(a,y)+o(a,x)
\]
for objects $a$ with $|a|=|x|+2$. For objects $b,d$ with $|b|=|x|+1=|d|+2$ the values $o(b',d')$ are obtained from $o(b,d)$ and the modifications described in Remark \ref{rem:newvalue}. We also need to consider the distortion described in Remark \ref{rem:distortion}, thus adding 1 to $o(b',d')$ for every pair of different points $A_1,A_2\in \M(b,x)$ together with each $B\in \M(y,d)$.

For objects $e$ with $|e|=|x|-2$ the values $o(y',e')$ are obtained from $o(x,e)$ and $o(y,e)$ and taking into account possible new loops coming from the modifications on the intervals, compare Remark \ref{rem:distortion}.

In the case of a double slide we only need to change $o(x',e')$ according to Lemma~\ref{lem:doubleslide}.

{\bf Step 5} After each handle slide, ensure that the non-matched points in $\M(a,b)$ have the same sign as $[a:b]$, using Lemma \ref{lem:changesign}.

Repeat Step 4 and 5 until the $1$-flow category is in primary Smith normal form.

\bibliographystyle{amsalpha}
\def\MR#1{}
\bibliography{algorithm}

\providecommand{\bysame}{\leavevmode\hbox to3em{\hrulefill}\thinspace}
\providecommand{\MR}{\relax\ifhmode\unskip\space\fi MR }
% \MRhref is called by the amsart/book/proc definition of \MR.
\providecommand{\MRhref}[2]{%
  \href{http://www.ams.org/mathscinet-getitem?mr=#1}{#2}
}
\providecommand{\href}[2]{#2}
\begin{thebibliography}{{Wil}15}

\bibitem[Bau95]{baues}
H.J. Baues, \emph{Homotopy types}, Handbook of algebraic topology,
  North-Holland, Amsterdam, 1995, pp.~1--72.

\bibitem[CJS95]{CJS}
R.~Cohen, J.D.S. Jones, and G.B. Segal, \emph{Floer's infinite dimensional
  {M}orse theory and homotopy theory}, The Floer Memorial Volume, Prog. in
  Math., vol. 133, Birkh\"auser Verlag, 1995, pp.~297--325.

\bibitem[JLS15a]{JLS}
D.~Jones, A.~Lobb, and D.~Sch\"utz, \emph{{An $\SL_n$ stable homotopy type for
  matched diagrams}}, arXiv e-print 1506.07725 (2015).

\bibitem[JLS15b]{JLS2}
\bysame, \emph{{Morse moves in flow categories}}, arXiv e-print 1507.03502
  (2015).

\bibitem[LLS15]{LawLipSar}
T.~Lawson, R.~Lipshitz, and S.~Sarkar, \emph{Khovanov homotopy type, burnside
  category, and products}, Preprint, available at
  http://arxiv.org/abs/1505.00213.

\bibitem[LOS16]{ALPODS2}
A.~Lobb, P.~Orson, and D.~Sch{\"u}tz, \emph{Framed cobordism and flow category
  moves}, arXiv e-print 1605.02003 (2016).

\bibitem[LS14a]{LipSarKhov}
R.~Lipshitz and S.~Sarkar, \emph{A {K}hovanov stable homotopy type}, J. Amer.
  Math. Soc. \textbf{27} (2014), 983--1042.

\bibitem[LS14b]{LipSarsinv}
R.~Lipshitz and S.~Sarkar, \emph{A refinement of {R}asmussen's
  {$s$}-invariant}, Duke Math. J. \textbf{163} (2014), no.~5, 923--952.
  \MR{3189434}

\bibitem[LS14c]{LipSarSq}
R.~Lipshitz and S.~Sarkar, \emph{A {S}teenrod square on {K}hovanov homology},
  J. Topol. \textbf{7} (2014), no.~3, 817--848.

\bibitem[Sch17]{SchuetzKJ}
D.~Sch\"utz, \emph{{XK}not{J}ob}, software (2017), Available at
  {www.maths.dur.ac.uk/{\textasciitilde }dma0ds/} {knotjob.html}.

\bibitem[Thi99]{knotscape}
M.~Thistlethwaite, \emph{Knotscape}, software (1999), Available at
  {https://www.math.utk.edu/} {morwen/knotscape.html}.

\bibitem[{Wil}15]{Willis}
M.~{Willis}, \emph{{Stabilization of the Khovanov Homotopy Type of Torus
  Links}}, ArXiv e-print 1511.02742 (2015).

\end{thebibliography}
\end{document}